\journalname{}
\def\fam{\eta}
\def\separ{\leftarrow}
\def\sepi{\,|\,}
\def\Fai{\Upsilon}
\def\Cai{\Lambda}
\def\CIP{{\sf C}}
\def\FVP{{\sf F}}
\def\KON{{\sf K}}
\def\POL{{\sf P}}
\def\conv{\mbox{\rm conv}\,}
\def\DAG{\mbox{\sf DAGS}\,(N)}
\def\Id{I}
\def\c{\mbox{\sf c}}
\def\u{\mbox{\sf u}}
\def\calI{{\cal I}}
\def\calP{{\cal P}}
\def\calQ{{\cal Q}}
\def\pa{\mbox{\sl pa}}
\def\ppa{\mbox{\scriptsize\sl pa}}
\def\pr{\mbox{\sl pre}}
\def\dv{\mathbb}
\def\obj{o}
\def\ym{m}
\def\zob{z}
\def\setdags{{\sf S}}
\def\barF{\bar{F}}
\def\Zz{Z}
\begin{document}

\title{Polyhedral aspects of score equivalence in Bayesian network
structure learning
}

\titlerunning{Polyhedral aspects of score equivalence}        

\author{James Cussens \and
        David Haws \and
        Milan Studen\'{y}}

\authorrunning{Cussens, Haws, and Studen\'{y}} 

\institute{J. Cussens \at
              University of York \\
              Deramore Lane, York, YO10 5GE, UK\\
              \email{james.cussens@york.ac.uk}           
              \and
              D. C. Haws \at
              Thomas J. Watson Research Center,\\
              Yorktown Heights, 1101 Kitchwan Road 134, NY, USA\\
              \email{dhaws@us.ibm.com}
              \and
              M. Studen\'{y} \at
              The Institute of Information Theory and Automation of the CAS,\\
              Pod Vod\'{a}renskou v\v{e}\v{z}\'{\i} 4, Prague, 182 08, Czech Republic\\
              \email{studeny@utia.cas.cz}
              }

\date{Received: date / Accepted: date}

\maketitle

\begin{abstract}
This paper deals with faces and facets of the {\em family-variable polytope} and
the {\em characteristic-imset polytope}, which are special polytopes used
in integer linear programming approaches to statistically learn Bayesian network structure.
A common form of linear objectives to be maximized in this area leads to
the concept of {\em score equivalence} (SE), both for linear objectives and for faces of the
family-variable polytope.

We characterize the linear space of SE objectives and establish a \mbox{one-to-one}
correspondence between SE faces of the family-variable polytope, the faces of
the characteristic-imset polytope, and standardized supermodular
functions. The characterization of SE facets in terms of extremality of the
corresponding supermodular function gives an elegant method to verify whether an \mbox{inequality}
is SE-facet-defining for the family-variable polytope.

We also show that when maximizing an SE objective one can eliminate
linear constraints of the family-variable polytope that correspond to non-SE facets.
However, we show that solely considering SE facets is not enough as a counter-example shows;
one has to consider the linear inequality constraints that correspond to facets of the
characteristic-imset polytope despite the fact that they may not define facets in the family-variable mode.

\keywords{family-variable polytope \and characteristic-imset polytope \and score equivalent face/facet \and
supermodular set function}
\subclass{52B12 \and 90C27 \and 68Q32}
\end{abstract}

\section{Introduction}\label{sec.intro}

The motivation for our paper is statistically learning {\em Bayesian network\/} (BN) structure.
Bayesian networks are popular models used in statistics \cite{Lau96} and probabilistic reasoning
\cite{Pea88}. Acyclic directed graphs, whose nodes correspond to random variables in consideration, are used to
describe the probabilistic conditional independence structures behind the statistical models \cite{Stu05}.

Specifically, our motivation comes from the {\em integer linear programming} (ILP)
approach to the statistical learning task to determine the structural model on basis
of observed data. Nowadays, the most popular is the score-based approach
consisting in maximizing a {\em scoring criterion} $G\mapsto \calQ (G,D)$, where $G$
is an acyclic directed graph, $D$ the observed database and the value $\calQ (G,D)$
says how much the BN structure defined by the graph $G$ explains the occurrence
the database $D$ \cite{Nea04}.

The point of the ILP approach is that the criteria used in practice can be viewed
as (the restriction of) affine functions of suitable vector representatives of
BN structures, typically of acyclic directed graphs. The most common is the
{\em family-variable} vector representation of the graphs suggested independently in
\cite{JSGM10} and \cite{Cus10}. Very good running times have recently been achieved
using this vector representation and the branch-and-cut approach \cite{BC13,GOBNILP}.
The corresponding {\em family-variable polytope}, defined as the convex hull of these
vector representatives, is one of the topics of interest in this paper.

Another ILP approach based on {\em characteristic-imset} vector representation
of BN structures was suggested in \cite{HLS12}; its motivational sources date back to
\cite{Stu05}. Unlike the family-variable vectors, the characteristic imsets uniquely
correspond to BN structures. This ILP approach is also feasible \cite{SH14},
but has not resulted in better running times than those achieved using GOBNILP
software \cite{GOBNILP}. The other polytope we are interested in this paper is
the {\em characteristic-imset polytope}, defined as the convex hull of all
characteristic imsets.

Our paper is devoted to the comparison of the facet-defining inequalities for
the two above-mentioned polytopes, because such inequalities appear to be the most useful
ones in the cutting plane approach to solving ILP problems \cite{Wol98}.
There were some former results on this comparison topic in \cite{SH13}, but the
present paper brings further and deeper findings.
\smallskip

The structure of the paper is as follows. In \S\,\ref{sec.notation} we introduce our
notation and recall basic concepts; elementary facts on polytopes we need later
are gathered in \S\,\ref{sec.formal-def}. Some fundamental observations on
facets of the family-variable polytope, on which our later considerations are based, are
in \S\,\ref{sec.fvp-facets}; some of these facts are also shown using different arguments
in a parallel paper \cite{CJKB15}.

In \S\,\ref{sec.SE-defin} we pinpoint the concept of {\em score equivalence} (SE),
both for linear objectives to be maximized and for faces of the family-variable polytope.
We characterize the linear space of SE objectives in \S\,\ref{sec.SE-char}.
Later, in \S\,\ref{sec.weak-proof}, we establish a one-to-one correspondence
between SE faces of the family-variable polytope and standardized supermodular set functions.
The most beneficial seems to be the characterization of SE facets as those that correspond
to extreme supermodular functions.

Section \S\,\ref{sec.cluster-ineq} deals with well-known (generalized)
{\em cluster inequalities} applied dominantly in contemporary ILP approaches
to BN structure learning. We find the corresponding supermodular functions and show
they are extreme. This gives a simple proof that the generalized cluster inequalities are facet-defining
for the family-variable polytope; note that another proof of this fact,
based on different arguments, will appear in \cite{CJKB15}.
We also interpret the generalized cluster inequalities in terms of connected uniform matroids.

Another one-to-one correspondence between SE faces of the family-variable polytope
and faces of the characteristic-imset polytope is established in \S\,\ref{sec.fac-cores};
to illustrate this correspondence we derive the form of cluster inequalities
in the characteristic-imset mode. A few simple examples are given in \S\,\ref{sec.examples}.

Further important observation of ours are in \S\,\ref{sec.revised-conj}:
when maximizing an SE objective, one actually need not apply the linear
facet-defining constraints on the family-variable polytope that are not SE. On the other hand, considering
only SE facets is not enough as a later counter-example in \S\,\ref{sec.five-var} shows.
Thus, we also reveal the hidden importance of the linear constraints that correspond to facets of the characteristic-imset polytope  in \S\,\ref{sec.revised-conj} .

The appendix contains the proof of an auxiliary combinatorial identity (\S\,\ref{sec.comb-iden}),
the catalogue of SE facets in case of four BN variables (\S\,\ref{sec.SE-four}) and the
catalogue of remaining facets of the characteristic-imset polytope in case of four BN variables
(\S\,\ref{sec.CIP-four}).

\section{Notation and basic concepts}\label{sec.notation}
Let $N$ be a finite non-empty set of BN variables; $n:= |N|<\infty$,
consider the non-trivial case $2\leq n$. Let $\DAG$ denote the
collection of acyclic directed graphs over $N$, that is, such graphs
having $N$ as the set of nodes.  An example of such a graph is the
{\em empty graph\/}, which is a graph over $N$ without adjacencies.
By a {\em full graph\/} we will mean any acyclic directed graph over
$N$ in which every pair of distinct nodes is adjacent. Given
$G\in\DAG$ and $a\in N$, the symbol $\pa_{G}(a):= \{b\in N:\ a\to b
~~\mbox{in $G$}\}$ will denote the {\em parent set\/} of the node
$a$.  A well-known equivalent definition of acyclicity of a directed
graph $G$ over $N$ is the existence of a total order $a_{1},\ldots
,a_{n}$ of nodes in $N$ such that, for every $i=1,\ldots ,n$,
$\pa_{G}(a_{i})\subseteq\{ a_{1},\ldots ,a_{i-1}\}$; we say then that the
order and the graph are {\em consonant}.  An {\em immorality\/} in $G$
is an induced subgraph of $G$ of the form $a\to c\leftarrow b$, where
the nodes $a$ and $b$ are not adjacent in $G$.  \smallskip

The symbol $G\sim H$ for $G,H\in\DAG$ will mean that the graphs $G$ and $H$
are {\em Markov equivalent}, that is, in graphical terms, they have the same adjacencies and immoralities;
for references see \cite[p.\,60]{Lau96} or \cite[p.\,48-49]{Stu05}.
An example of a Markov equivalence class is the set of full graphs over $N$.

A node $a$ together with its parent set $B$ will be called a
\emph{family}. Note that any directed graph is determined by its $N$ families.
Throughout the paper, the index set of family-variable vectors will be
$$
\Fai := \{\, (a\sepi B)\, :\
a\in N ~~\& ~~ \emptyset\neq B\subseteq N\setminus\{ a\}\,\}\,.
$$
Note that families with empty parent sets are not included.

Given $b\in  N$ and $\Zz\subseteq N\setminus\{ b\}$
the symbol $\Id_{b\separ\Zz}$ will be used to denote the identifier of this pair, that is,
an element of ${\dv R}^{\Fai}$ given by
$$
\Id_{b\separ\Zz}(a\sepi B) =
\left\{
\begin{array}{ll}
\,1 & ~~\mbox{if $a=b$ and $B=\Zz$,}\\
\,0 & ~~\mbox{otherwise,}
\end{array}
\right.
\quad \mbox{for any $(a\sepi B)\in\Fai$.}
$$
In case $\Zz =\emptyset$, for any $b\in N$, $\Id_{b\separ\Zz}=\Id_{b\separ \emptyset}$ is the zero vector.
The symbol $\fam_{G}$ will be used to denote the family-variable vector encoding $G\in\DAG$:
$$
\fam_{G}(a\sepi B) =
\left\{
\begin{array}{ll}
\,1 & ~~\mbox{if $B=\pa_{G}(a)$,}\\
\,0 & ~~\mbox{otherwise,}
\end{array}
\right.
\quad \mbox{for $(a\sepi B)\in\Fai$.}
$$
The {\em family-variable polytope} can be
defined as the convex hull of the set of all possible DAG-codes over $N$:
$$
\FVP := \conv(\{\, \fam_{G}\in {\dv R}^{\Fai}\, :\  G\in \DAG\,\})\,.
$$
Clearly, the dimension of\/ $\FVP$, defined as the dimension of its linear hull,
is $\dim (\FVP)=|\Fai |=n\cdot (2^{n-1}-1)$. It is easy to see that none of the
DAG-codes is a non-trivial convex combination of the others. In particular, the set of
vertices (= extreme points) of $\FVP$ is just the set of DAG-codes.
\smallskip

Given two vectors $v,w\in {\dv R}^{\Gamma}$, where $\Gamma$ is a non-empty finite index set, say $\Gamma =\Fai$,
their scalar product will be denoted by $\langle v,w\rangle_{\Gamma}$, or just by $\langle v,w\rangle$
if there is no danger of confusion. We also consider alternative index sets.
\smallskip

Specifically, the {\em characteristic imset\/} of $G\in\DAG$, introduced in \cite{HLS12} and denoted below by $\c_{G}$,
is an element of ${\dv R}^{\Cai}$ with
$$
\Cai := \{\, S\subseteq N\, :\ |S|\geq 2\,\}\,.
$$
Recall from \cite[\S\,3.3.2]{SH13} and \cite[\S\,2]{BC13} that $\c_{G}$ is
a many-to-one linear function of $\fam_{G}$; the transformation is $\fam\mapsto \c_{\fam}$,
where
\begin{equation}
\c_{\fam}(S) = \sum_{a\in S}\, \ \sum_{B\,:\,S\setminus\{ a\}\subseteq B\subseteq N\setminus\{ a\}}
\fam (a\sepi B)\qquad \mbox{for any $S\subseteq N$, $|S|\geq 2$.}
\label{eq.eta-to-char}
\end{equation}
A further fundamental observation is that $G\sim H$ for $G,H\in\DAG$ iff
$\c_{G}=\c_{H}$; see \cite[\S\,3]{HLS12} for more detailed justification.
The {\em characteristic-imset polytope\/} is defined as follows:
$$
\CIP := \conv(\{\, \c_{G}\in {\dv R}^{\Cai}\, :\  G\in \DAG\,\})\,.
$$
One can show that $\dim (\CIP)=|\Cai |= 2^{n}-n-1$. Of course, $\CIP$ is the image of $\FVP$ by the
linear map \eqref{eq.eta-to-char}.
\smallskip

Moreover, the power set $\calP (N) := \{ A\,:\; A\subseteq N\}$ will serve as an index set
for vectors, used as auxiliary tools in a later proof in \S\,\ref{sec.fac-cores}.
Given $A\subseteq N$, let us denote its indicator vector by
$$
\delta_{A}(S) = \left\{ \begin{array}{ll}
1 & ~~\mbox{\rm if}~~ S=A\,,\\
0 & ~~\mbox{\rm if}~~ S\subseteq N,\; S\neq A\,,
\end{array} \right.
$$
and define the {\em standard imset\/} for $G\in\DAG$ as an element of ${\dv R}^{\calP (N)}$:
\begin{equation}
\u_{G} := \delta_{N} - \delta_{\emptyset} + \sum_{a\in N}\,\left\{\,\delta_{\ppa_{G}(a)} -
\delta_{\{ a\}\cup\ppa_{G}(a)}\,\right\}.
\label{eq.def-stand}
\end{equation}
Recall from \cite[\S\,3.3]{SH13} that $\c_{G}$ is a one-to-one affine function of $\u_{G}$, specifically
\begin{equation}
\c_{G} (T) = 1-  \sum_{S:\, T\subseteq S\subseteq N} \u_{G}(S) \qquad \mbox{for $T\subseteq N$, $|T|\geq 2$}.
\label{eq.stan-to-char}
\end{equation}
In particular, the combination of a former characterization \cite[Theorem 4]{SVH10} of
the vertices of the standard-imset polytope with \eqref{eq.stan-to-char} implies
that the set of vertices (= extreme points) of the characteristic-imset polytope $\CIP$ is
just the set of characteristic imsets $\c_{G}$ for $G\in\DAG$. In other words, no
characteristic imset is a non-trivial convex combination of the others.

\section{Elementary facts on facets and some conventions}\label{sec.formal-def}
Recall the basic concept of a face/facet of a polytope.

 \begin{definition}[dimension, face, facet]\rm\label{def.face} ~~\\
 Let $\POL$ be a polytope in ${\dv R}^{\Gamma}$, where $\Gamma\neq\emptyset$\/ is finite.
 Its {\em dimension} is defined as the dimension of its affine hull, which is a translate
 of a linear subspace of ${\dv R}^{\Gamma}$.
 A set $F\subseteq\POL$ is called a {\em face\/} of $\POL$ if there exists a vector
 $\obj\in{\dv R}^{\Gamma}$ and a constant $u\in {\dv R}$ such that
 \begin{itemize}
 \item $\POL\subseteq \{\, v\in {\dv R}^{\Gamma}\, :\ \langle\obj , v\rangle\leq u\,\}$,  and
 \item $F = \{\, v\in\POL\, :\ \langle\obj , v\rangle =u\,\}$\,.
 \end{itemize}
 We say then that the face $F$ is defined by the inequality $\langle\obj , v\rangle\leq u$.
 Every face of a polytope is a (possibly empty) polytope, as well;
 thus, its dimension is defined. A {\em facet} of $\POL$ is a face of dimension $\dim (\POL )-1$.

 The function $v\in{\dv R}^{\Gamma}\mapsto \langle\obj , v\rangle$, where $\obj\in{\dv R}^{\Gamma}$, is
 typically a linear objective to be maximized by a linear program;  with a small abuse of terminology we will call
 $\obj\in {\dv R}^{\Gamma}$ an {\em objective}.
 \end{definition}

 Note that the dimension of a face is one less than the maximum number of affinely
 independent vectors in the face. An alternative equivalent definition of a facet
 is that it is a sub-maximal face with respect to inclusion.

 \begin{lemma}\label{lem.facet=maxim}\rm
 Given a polytope $\POL$ in ${\dv R}^{\Gamma}$, $0<|\Gamma |<\infty$, a face $F\subset\POL$
 is a facet of $\POL$ iff the only face $F^{\prime}$ of $\POL$ with $F\subset F^{\prime}$ is $F^{\prime}=\POL$ itself.
 \end{lemma}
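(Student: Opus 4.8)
The plan is to prove the two implications separately, reducing everything to two standard structural facts about faces: that strict inclusion of faces forces a strict drop in dimension, and that every proper face is contained in some facet. Throughout I would work inside the affine hull of $\POL$, so that without loss of generality $\POL$ is full-dimensional in its ambient space; this does not change which subsets are faces and only shifts all dimensions by the same constant, so it is harmless.

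For the forward implication, suppose $F$ is a facet, i.e. $\dim(F)=\dim(\POL)-1$, and let $F^{\prime}$ be any face with $F\subset F^{\prime}$. The key observation is that a proper face of a polytope always has strictly smaller dimension than the polytope, because the relative interior of the polytope meets no supporting hyperplane. Applying this to the strict inclusion $F\subset F^{\prime}$ — and using that a face of $\POL$ contained in the face $F^{\prime}$ is itself a (proper) face of the polytope $F^{\prime}$ — gives $\dim(F)<\dim(F^{\prime})$. Hence $\dim(F^{\prime})>\dim(\POL)-1$, so $\dim(F^{\prime})=\dim(\POL)$, and since the only face of full dimension is $\POL$ itself, we conclude $F^{\prime}=\POL$, as required.

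For the reverse implication I would use containment directly rather than a dimension count. Assume $F\subset\POL$ is a face whose only strict super-face is $\POL$. Since $F$ is a proper face, the main structural fact I would invoke is that every proper face of a polytope is contained in some facet: writing $\POL$ via an irredundant system of inequalities $\langle a_{i},v\rangle\le b_{i}$, every face equals the intersection of the facets on whose defining hyperplanes it lies, and a proper face lies on at least one such hyperplane. Let $F^{\prime}$ be a facet with $F\subseteq F^{\prime}$; since $\dim(F^{\prime})=\dim(\POL)-1$ we have $F^{\prime}\neq\POL$, so by the maximality hypothesis the inclusion cannot be strict, whence $F=F^{\prime}$ is itself a facet.

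The hard part will be the structural input used in the reverse direction, namely that every proper face is contained in a facet (equivalently, that the face lattice is graded, so that no ``gap'' of dimension larger than one can occur between a face and $\POL$). I expect the real work to lie in establishing the representation of an arbitrary face as an intersection of facet-defining hyperplanes, which in turn rests on the existence of an irredundant inequality description of $\POL$ and on the fact that the constraints active on a supporting hyperplane span the objective $\obj$. The two dimension facts — that strict inclusion drops dimension, and that full dimension characterizes $\POL$ — are routine consequences of relative-interior arguments and can be quoted as such.
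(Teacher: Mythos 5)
Your proof is correct, and half of it coincides with the paper's argument: for the direction ``facet $\Rightarrow$ the only strict super-face is $\POL$'', both you and the paper reduce to the fact that strict inclusion of faces forces a strict drop in dimension (the paper cites Br{\o}ndsted, Corollary 5.5; you sketch the relative-interior argument behind it, routed through the standard fact that a face of $\POL$ contained in $F^{\prime}$ is a face of the polytope $F^{\prime}$). Where you genuinely differ is the converse. The paper invokes gradedness of the face lattice (Br{\o}ndsted, Corollary 9.7): if $F$ were inclusion-maximal among proper faces but not a facet, then $\dim(F)<\dim(\POL)-1$, so a face of dimension exactly $\dim(\POL)-1$ would exist strictly between $F$ and $\POL$, contradicting maximality. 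You instead invoke coatomicity --- every proper face is contained in some facet, read off from an irredundant inequality description of $\POL$ --- after which maximality forces $F$ to equal that facet. The two quoted facts are of comparable textbook standing, but yours is the weaker (and for this purpose sufficient) structural input: gradedness implies it, and it admits a shorter self-contained proof from the H-description, whereas interpolating faces of every intermediate dimension takes more machinery to prove from scratch. So the paper's choice buys a one-line contradiction from a single citation, while yours buys a direct, non-contradiction argument resting on an easier supporting lemma. One pedantic caveat, immaterial for the paper's applications (full-dimensional polytopes, nonempty faces): your facet-intersection representation should treat the empty face separately --- it is contained in any facet, and a facet always exists, the empty face itself being one when $\POL$ is a single point.
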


 \begin{proof}
 The sufficiency follows from the fact that, for every pair of faces $F_{1}\subset F_{2}$ of
 $\POL$ with $\dim (F_{1})<d<\dim (F_{2})$, a face $F_{3}$ of $\POL$ exists with $F_{1}\subset F_{3}\subset F_{2}$ and
 $\dim (F_{3})=d$; see, for example, \cite[Corollary 9.7]{Bro83}.
 For the necessity realize that, if $F_{1}\subset F_{2}$ are faces of $\POL$ then $\dim (F_{1})<\dim (F_{2})$;
 see \cite[Corollary 5.5]{Bro83}.
 \end{proof}

 The consequence is an auxiliary observation, applied later in the paper.

 \begin{corollary}\label{cor.facet=maxim}\rm
 Let $\POL\subseteq {\dv R}^{\Gamma}$, $0<|\Gamma |<\infty$ be a polytope and
 let $\langle\obj_{1},v\rangle\leq u_{1}$ and
 $\langle\obj_{2},v\rangle\leq u_{2}$ be valid inequalities
 for $v\in\POL$ such that
 \begin{equation}
 \exists\, w_{1}\in\POL\, :\ \langle\obj_{1},w_{1}\rangle <u_{1} ~\&~ \langle\obj_{2},w_{1}\rangle =u_{2} ~~\mbox{and}~~
 \exists\, w_{2}\in\POL\, :\ \langle\obj_{2},w_{2}\rangle <u_{2}.
 \label{eq.non-facet}
 \end{equation}
 Then no combination of these inequalities $\langle\alpha\cdot\obj_{1}+\beta\cdot\obj_{2},v\rangle\leq \alpha\cdot u_{1}+\beta\cdot u_{2}$
 with $\alpha ,\beta >0$ is a facet-defining inequality for $\POL$.
 \end{corollary}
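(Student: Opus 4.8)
The plan is to show that the combined inequality $\langle\alpha\cdot\obj_{1}+\beta\cdot\obj_{2},v\rangle\leq\alpha\cdot u_{1}+\beta\cdot u_{2}$ cannot be facet-defining by verifying that the two hypotheses of Corollary~\ref{cor.facet=maxim} make its associated face fit into the non-facet criterion encoded in Lemma~\ref{lem.facet=maxim}. First I would observe that since both $\langle\obj_{1},v\rangle\leq u_{1}$ and $\langle\obj_{2},v\rangle\leq u_{2}$ are valid for $\POL$ and $\alpha,\beta>0$, the combined inequality is valid as well: for any $v\in\POL$ we have $\langle\alpha\cdot\obj_{1}+\beta\cdot\obj_{2},v\rangle=\alpha\langle\obj_{1},v\rangle+\beta\langle\obj_{2},v\rangle\leq\alpha u_{1}+\beta u_{2}$. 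Let $F$ denote the face it defines, namely $F=\{v\in\POL:\langle\alpha\cdot\obj_{1}+\beta\cdot\obj_{2},v\rangle=\alpha u_{1}+\beta u_{2}\}$.

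The key algebraic step is to note that, because $\alpha,\beta>0$ and each of $\langle\obj_{1},v\rangle\leq u_{1}$, $\langle\obj_{2},v\rangle\leq u_{2}$ holds separately, equality in the combined inequality forces equality in \emph{both} constituent inequalities simultaneously. Concretely, $v\in F$ if and only if $\langle\obj_{1},v\rangle=u_{1}$ \emph{and} $\langle\obj_{2},v\rangle=u_{2}$; the ``only if'' direction is the substantive one and follows because $\alpha(u_{1}-\langle\obj_{1},v\rangle)+\beta(u_{2}-\langle\obj_{2},v\rangle)=0$ is a sum of two nonnegative terms with strictly positive coefficients, so each term must vanish. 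Thus $F=F_{1}\cap F_{2}$, where $F_{i}$ is the face defined by $\langle\obj_{i},v\rangle\leq u_{i}$.

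I would then use the two witnesses from hypothesis~\eqref{eq.non-facet} to show $F$ is strictly contained in a larger proper face, so that by Lemma~\ref{lem.facet=maxim} it is not a facet. Consider $F_{1}$, the face defined by $\langle\obj_{1},v\rangle\leq u_{1}$. Clearly $F=F_{1}\cap F_{2}\subseteq F_{1}$. The point $w_{1}$ satisfies $\langle\obj_{1},w_{1}\rangle<u_{1}$, so $w_{1}\notin F_{1}$, which shows $F_{1}\neq\POL$, i.e.\ $F_{1}$ is a proper face. On the other hand $w_{1}$ satisfies $\langle\obj_{2},w_{1}\rangle=u_{2}$, so $w_{1}\in F_{2}$; combined with $w_{1}\notin F_{1}$ this gives $w_{1}\notin F$ while we still need a point in $F_{1}$ to confirm the inclusion is proper. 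Using $w_{2}$ with $\langle\obj_{2},w_{2}\rangle<u_{2}$ shows $w_{2}\notin F_{2}$, hence $w_{2}\notin F$; I would argue (picking $w_{2}$, or any vertex, appropriately, and if necessary noting $F$ itself is nonempty only in the case worth excluding) that $F\subsetneq F_{1}\subsetneq\POL$, exhibiting a proper face $F_{1}$ strictly between $F$ and $\POL$. By the contrapositive of Lemma~\ref{lem.facet=maxim}, $F$ is not a facet.

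The main obstacle I anticipate is the careful bookkeeping around degenerate cases: if $F$ is empty the claim is vacuous, and I must ensure the chain $F\subsetneq F_{1}\subsetneq\POL$ is genuinely strict rather than merely showing $F\neq\POL$. The cleanest way to secure strictness is to verify directly that $w_{1}\in F_{1}\setminus F$ (it lies in $F_{2}$ but not $F_{1}$ is the wrong way—so I would instead use that $F=F_{1}\cap F_{2}$ and exhibit a point of $F_{1}\setminus F_{2}$, which exists precisely because $F_{2}$ is a proper face of $F_{1}$ whenever $F_{1}\not\subseteq F_{2}$). The hypothesis~\eqref{eq.non-facet} is engineered to guarantee exactly this non-containment, so the delicate part is translating the two witness conditions into the statement that $F_{1}$ is a proper face of $\POL$ strictly containing $F=F_{1}\cap F_{2}$, after which Lemma~\ref{lem.facet=maxim} closes the argument.
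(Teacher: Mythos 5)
Your reduction $F=F_{1}\cap F_{2}$ and your overall plan---exhibit a proper face of $\POL$ strictly between $F$ and $\POL$, then invoke Lemma~\ref{lem.facet=maxim}---are sound, and this is essentially the contrapositive form of the paper's contradiction argument. But you chose the wrong intermediate face, and the justification you offer for strictness is incorrect. The hypothesis \eqref{eq.non-facet} supplies a point $w_{1}\in F_{2}\setminus F_{1}$ and a point $w_{2}\in\POL\setminus F_{2}$; that is, it guarantees $F_{2}\not\subseteq F_{1}$, \emph{not}, as you assert in your final paragraph, that $F_{1}\not\subseteq F_{2}$. Consequently the strict inclusion $F\subsetneq F_{1}$ that your chain needs can genuinely fail. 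Concretely, let $\POL$ be the unit square in ${\dv R}^{2}$ with vertices $(0,0),(1,0),(1,1),(0,1)$, take $\obj_{1}=(1,1)$, $u_{1}=2$, $\obj_{2}=(1,0)$, $u_{2}=1$, $w_{1}=(1,0)$, $w_{2}=(0,0)$. Then \eqref{eq.non-facet} holds, but $F_{1}=\{(1,1)\}$ is contained in the edge $F_{2}=\{(1,y): 0\leq y\leq 1\}$, so $F=F_{1}\cap F_{2}=F_{1}$ and $F_{1}\setminus F_{2}$ is empty: your chain $F\subsetneq F_{1}\subsetneq\POL$ collapses, even though the conclusion of the corollary is true there ($F$ is a vertex, not a facet).

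The repair is immediate: use $F_{2}$, not $F_{1}$, as the intermediate face. Both strictness conditions then come directly from \eqref{eq.non-facet}: since $F\subseteq F_{1}$ and $w_{1}\notin F_{1}$, the point $w_{1}\in F_{2}$ witnesses $F\subsetneq F_{2}$; and $w_{2}\in\POL\setminus F_{2}$ witnesses $F_{2}\subsetneq\POL$. Lemma~\ref{lem.facet=maxim} then yields that $F$ is not a facet, with no case analysis and no appeal to nonemptiness of $F$. For comparison, the paper runs the same idea as a contradiction: assuming $F$ is a facet, Lemma~\ref{lem.facet=maxim} forces each of $F_{1},F_{2}$ to equal either $F$ or $\POL$; the witnesses $w_{1},w_{2}$ rule out $F_{1}=\POL$ and $F_{2}=\POL$, so $F_{1}=F=F_{2}$, which contradicts $w_{1}\in F_{2}\setminus F_{1}$. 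Either way, the asymmetry of \eqref{eq.non-facet} (equality of $w_{1}$ on the second inequality, no such requirement on $w_{2}$) is exactly what dictates that $F_{2}$, not $F_{1}$, is the face that must strictly exceed $F$.
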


 \begin{proof}
 Let $F_{1}$, $F_{2}$ and $F$ be the faces of\/ $\POL$ defined by  inequalities
 $\langle\obj_{1},v\rangle\leq u_{1}$, $\langle\obj_{2},v\rangle\leq u_{2}$
 and their combination $\langle\alpha\cdot\obj_{1}+\beta\cdot\obj_{2},v\rangle\leq \alpha\cdot u_{1}+\beta\cdot u_{2}$, respectively. Given $v\in F$ one has
 $$
 \alpha\cdot \{\underbrace{\langle\obj_{1},v\rangle -u_{1}}_{\leq 0}\}
 + \beta\cdot \{\underbrace{\langle\obj_{2},v\rangle -u_{2}}_{\leq 0}\} =0,
 $$
 which implies that the expressions in braces must vanish. In other words, $F\subseteq F_{1}\cap F_{2}$.
 Assume for a contradiction that $F$ is a facet. By Lemma \ref{lem.facet=maxim} observe that either $F_{1}=\POL$ or $F_{1}=F$; the same
 for $F_{2}$. Since \eqref{eq.non-facet} implies $w_{1}\in\POL\setminus F_{1}$ and $w_{2}\in\POL\setminus F_{2}$, one necessarily
 has $F_{1}=F=F_{2}$. However, this contradicts the existence of $w_{1}\in F_{2}\setminus F_{1}$ assumed in \eqref{eq.non-facet}.
 \end{proof}

 In this paper we mainly deal with the family-variable polytope $\FVP$.
 Every face of\/ $\FVP$ can be identified with a set of acyclic directed graphs. Specifically:
 $$
 F\subseteq\FVP ~~\mbox{a face of\/ $\FVP$} ~~\longleftrightarrow ~~
 \setdags =\{ G\in\DAG :\ \fam_{G}\in F\}\,.
 $$
 This correspondence preserves inclusion, that is, $F_{1}\subseteq F_{2}$ for faces of\/ $\FVP$
 iff $\setdags_{1}\subseteq\setdags_{2}$ for the corresponding sets of graphs $\setdags_{i}\subseteq\DAG$.
 The identification is possible owing to a basic fact from the theory of polytopes
 that every face $F$ of a polytope $\POL$ is the convex hull of the set of vertices of $\POL$
 which belong to $F$, see \cite[Lemma VI.1.1]{Bar02} or \cite[Proposition 2.3(i)]{Zie95}.
 Since the vertices of\/ $\FVP$ are just the DAG-codes $\fam_{G}$, where $G\in\DAG$,
 every face $F$ of\/ $\FVP$ can be identified with a subset of $\DAG$.
 This leads to the following convention.

 \begin{definition}[a set of graphs interpreted as a face]\rm\label{def.face-DAG} ~~\\
 We will call a set $\setdags\subseteq\DAG$  a {\em face} (of the family-variable polytope $\FVP$) if
 $\conv (\{\,\fam_{G}\in {\dv R}^{\Fai} :\ G\in\setdags\})$ is a face of\/ $\FVP$.
 Analogously, $\setdags\subseteq\DAG$ will be called a {\em facet} (of $\FVP$) if
 $\conv (\{\,\fam_{G}\in {\dv R}^{\Fai} :\ G\in\setdags\})$ is a facet of\/ $\FVP$.
 \end{definition}

 A direct method to show that a face $\setdags\subseteq\DAG$ is a facet is to show
 that the respective geometric face $F$ has the dimension $\dim (\FVP )-1$, which means, to find
 $\dim (\FVP )$ affinely independent vectors in $F$.
 Since the vertices of $F$ are just the family-variable vectors for $G\in\setdags$, the task, more or less,
 reduces to the question of finding a subset $\setdags^{\prime}\subseteq\setdags$ of cardinality
 $|\Fai |=n\cdot (2^{n-1}-1)$ such that the vectors $\{\,\fam_{G}\in {\dv R}^{\Fai} :\ G\in\setdags^{\prime}\}$
 are affinely independent.
 \smallskip

 We accept a {\em standardization convention\/} that valid inequalities for vectors $\fam\in\FVP$ in
 the family-variable polytope will be written in the upper-bound form:
 \begin{equation}
 \langle\obj ,\fam\rangle\leq u\quad
 \mbox{where $\obj\in {\dv R}^{\Fai}$ is an objective and $u\in {\dv R}$ an upper bound.}
 \label{eq.up-bou-ineq}
 \end{equation}
 Note that any lower-bound inequality $\langle\obj ^{\prime},\fam\rangle\geq l$ can be replaced by $\langle\obj ,\fam\rangle\leq u$
 where $\obj =-\obj^{\prime}$ and $u=-l$. Since $\FVP$ is a rational polytope, its facets are defined by inequalities with
 rational coefficients, that is, by \eqref{eq.up-bou-ineq} with $\obj\in {\dv Q}^{\Fai}$.
 By multiplying it by a suitable positive factor one can get (unique) integer vector objective $\obj\in {\dv Z}^{\Fai}$ whose components have no common
 prime divisor. Since the vertices of\/ $\FVP$ are zero-one vectors, the tight upper bound
 in \eqref{eq.up-bou-ineq} must be then an integer as well: $u\in {\dv Z}$.
 \smallskip

 Moreover, a couple of special {\em extension conventions\/} for vectors
 in ${\dv R}^{\Fai}$ and ${\dv R}^{\Cai}$ will be accepted to simplify some later formulas:
 \begin{itemize}
 \item for every {\em objective} $\obj\in {\dv R}^{\Fai}$, assume $\obj (b\sepi\emptyset)=0$ for any $b\in N$,
 \item for any $\ym\in {\dv R}^{\Cai}$, put $\ym (S)=0$ for $S\subseteq N$, $|S|\leq 1$.
 \end{itemize}

\section{Observations on facets of the family-variable polytope}\label{sec.fvp-facets}
In this section, we present a few general facts concerning faces and facets of\/ $\FVP$ and
describe explicitly those facets which contain the empty graph.
Note that some of these basic observations are also mentioned and used in a parallel paper \cite{CJKB15}.
We keep the standardization convention from \S\,\ref{sec.formal-def}.
A basic division of facet-defining inequalities is on the basis of the upper bound value $u$.

\begin{lemma}\label{lem.increase-facet}\rm
Assume that \eqref{eq.up-bou-ineq}, that is, the inequality
$\langle\obj ,\fam\rangle\leq u$ with $\obj\in {\dv R}^{\Fai}$ and $u\in {\dv R}$,
is a valid inequality for all $\fam\in\FVP$.
Then $u\geq 0$.
\begin{itemize}
\item[(i)] One has $u=0$ iff the corresponding face of\/ $\FVP$ contains the empty graph.\smallskip
\item[(ii)] If $u=0$ then the objective coefficients are non-positive:
$$
\obj (a\sepi B)\leq 0\qquad \mbox{for each ~$(a\sepi B)\in\Fai$}.
$$
\item[(iii)] The facet-defining inequalities tight at the empty graph are
just
\begin{equation}
-\fam (a\sepi B)\leq 0\qquad \mbox{for each~ $(a\sepi B)\in\Fai$}.
\label{eq.facet-empty}
\end{equation}
\item[(iv)] If \eqref{eq.up-bou-ineq} is a facet-defining inequality
for $\FVP$ with $u>0$ then the objective coefficients are non-negative
and increasing in the following sense:
$$
\obj (a\sepi B)\geq \obj (a\sepi A)\geq 0\quad \mbox{whenever $a\in N$ and~ $\emptyset\neq A\subseteq B\subseteq N\setminus\{ a\}$}.
$$
\end{itemize}
\end{lemma}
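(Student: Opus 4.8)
The plan is to ground everything in two explicit families of vertices. The empty graph has code equal to the zero vector of ${\dv R}^{\Fai}$ (all its parent sets are empty, and such families are excluded from $\Fai$), while for each $(a\sepi B)\in\Fai$ the graph with $\pa_{G}(a)=B$ and all other parent sets empty is acyclic and has code $\Id_{a\separ B}$. Validity of \eqref{eq.up-bou-ineq} at the empty graph gives $0=\langle\obj,0\rangle\leq u$, so $u\geq 0$; and the face it defines contains the empty graph iff $\langle\obj,0\rangle=u$, i.e. iff $u=0$, which is (i). For (ii), if $u=0$ then validity at $\Id_{a\separ B}$ reads $\obj(a\sepi B)=\langle\obj,\Id_{a\separ B}\rangle\leq 0$.

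For (iii) I would first verify that each inequality $-\fam(a\sepi B)\leq 0$ is facet-defining: the zero vector together with the $|\Fai|-1$ codes $\Id_{c\separ C}$, $(c\sepi C)\neq(a\sepi B)$, all lie in the corresponding face and are affinely independent (their differences from the zero vector are distinct coordinate vectors), so the face attains the full dimension $|\Fai|-1$. Conversely, let $\langle\obj,\fam\rangle\leq 0$ be facet-defining; by (ii) all coordinates of $\obj$ are $\leq 0$ and $\obj\neq 0$. If $\obj$ had two negative coordinates $(a_{1}\sepi B_{1})\neq(a_{2}\sepi B_{2})$, I would split $\obj=\obj_{1}+\obj_{2}$, where $\obj_{1}$ retains only the $(a_{1}\sepi B_{1})$-entry; both pieces are valid with bound $0$, and the witnesses $w_{1}=\Id_{a_{1}\separ B_{1}}$ and $w_{2}=\Id_{a_{2}\separ B_{2}}$ verify the hypotheses of Corollary \ref{cor.facet=maxim}, so the combination $\obj=\obj_{1}+\obj_{2}$ cannot be facet-defining, a contradiction. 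Hence $\obj$ has exactly one nonzero (negative) entry, which after the integer normalization yields \eqref{eq.facet-empty}.

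The crux of (iv) is an observation I would isolate first: if $F$ is a facet of $\FVP$ with $u>0$, then for every $(a\sepi C)\in\Fai$ some vertex $\fam_{G}\in F$ satisfies $\pa_{G}(a)=C$. Indeed, otherwise $F$ would be contained in the proper face $F_{0}=\{\,v\in\FVP:\ v(a\sepi C)=0\,\}$; since $F$ is a facet, Lemma \ref{lem.facet=maxim} forces $F=F_{0}$, but $F_{0}$ contains the empty graph whereas $F$ (having $u>0$) does not by (i), a contradiction.

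Granting this, non-negativity and monotonicity follow by single-family perturbations of facet vertices. For $\obj(a\sepi A)\geq 0$, take $\fam_{G}\in F$ with $\pa_{G}(a)=A$ and let $G'$ delete the parent set of $a$; then $\fam_{G'}=\fam_{G}-\Id_{a\separ A}$ is again a DAG-code, and validity gives $u-\obj(a\sepi A)=\langle\obj,\fam_{G'}\rangle\leq u$. For monotonicity with $\emptyset\neq A\subseteq B$, take $\fam_{G}\in F$ carrying the \emph{larger} set $\pa_{G}(a)=B$ and let $G'$ shrink it to $A$; removing edges preserves acyclicity, so $\fam_{G'}=\fam_{G}-\Id_{a\separ B}+\Id_{a\separ A}$ is a DAG-code and validity yields $u-\obj(a\sepi B)+\obj(a\sepi A)\leq u$, i.e. $\obj(a\sepi B)\geq\obj(a\sepi A)$. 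The step I expect to be most delicate is getting this orientation right: one must perturb a facet vertex carrying the \emph{larger} family $B$ and contract it to $A$ (enlarging a facet vertex would give the reverse, false, inequality), which is precisely why the isolated observation guaranteeing a facet vertex with any prescribed family is the real engine of the argument.
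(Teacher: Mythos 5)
Your proof is correct. Parts (i)--(iii) follow essentially the paper's own argument: the empty graph's code is the zero vector, validity at the identifiers $\Id_{a\separ B}$ gives (ii), facetness of each inequality in \eqref{eq.facet-empty} is witnessed by the zero vector together with the $|\Fai|-1$ identifiers $\Id_{c\separ C}$, $(c\sepi C)\neq(a\sepi B)$, and uniqueness follows by splitting a non-positive objective into two pieces and invoking Corollary~\ref{cor.facet=maxim} with witnesses $w_{1}=\Id_{a_{1}\separ B_{1}}$, $w_{2}=\Id_{a_{2}\separ B_{2}}$ --- exactly the paper's decomposition. Part (iv), however, you prove by a genuinely different route. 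The paper argues dually, on objectives: assuming $\obj (a\sepi B)<\obj (a\sepi A)$, it replaces the coefficient $\obj (a\sepi B)$ by $\obj (a\sepi A)$, checks (by the same kind of graph surgery you use) that the modified inequality remains valid, writes \eqref{eq.up-bou-ineq} as that valid inequality plus the positive multiple $\beta=\obj (a\sepi A)-\obj (a\sepi B)$ of $-\fam (a\sepi B)\leq 0$, and contradicts facetness via Corollary~\ref{cor.facet=maxim} (with $w_{1}=0$, $w_{2}=\Id_{a\separ B}$). You argue primally, on vertices: you first isolate the observation that a facet with $u>0$ contains, for every $(a\sepi C)\in\Fai$, a vertex realizing that family --- this is precisely the second claim of Corollary~\ref{cor.increase-facet} in the paper, which is proved there by exactly your Lemma~\ref{lem.facet=maxim} argument and independently of (iv), so your reordering creates no circularity --- and you then read off $\obj (a\sepi B)\geq\obj (a\sepi A)\geq 0$ by deleting or shrinking the parent set of $a$ at such a tight vertex and comparing with validity. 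Your route is more elementary for (iv) and has the side benefit of establishing the realization property before it is formally needed; the paper's route keeps (iii) and (iv) uniformly inside the objective-decomposition machinery of Corollary~\ref{cor.facet=maxim}, and subsumes non-negativity under monotonicity via the extension convention $\obj (a\sepi\emptyset)=0$ rather than treating it separately as you do. Your closing remark about orientation is indeed the crux: shrinking a facet vertex that carries the larger family $B$ stays in $\FVP$ and gives the stated inequality, whereas enlarging a vertex carrying $A$ could create cycles and, even when acyclic, would only yield the reversed comparison.
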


Note that an alternative proof of Lemma~\ref{lem.increase-facet}(iv) is
in \cite[Proposition\,5]{CJKB15}.

\begin{proof}
The zero vector in ${\dv R}^{\Fai}$ is the code for the empty graph and, therefore,
belongs to $\FVP$. The substitution of $\fam =0$ into \eqref{eq.up-bou-ineq} gives $0\leq u$.
It is clear that the inequality is tight for $\fam =0$ iff $u=0$, which gives (i).

As concerns (ii), assume for a contradiction that $(a\sepi B)\in\Fai$ such that $\obj (a\sepi B)>0$ exists in
\eqref{eq.up-bou-ineq} with $u=0$. Consider $G\in\DAG$ with $\fam_{G} =\Id_{a\separ B}$.
Then $\langle\obj ,\fam_{G}\rangle = \obj (a\sepi B)>0=u$ contradicts the validity of
\eqref{eq.up-bou-ineq}.

As concerns (iii), an elementary fact is that, for every $(b\sepi D)\in\Fai$,
all the inequalities in \eqref{eq.facet-empty} with $(a\sepi B)\neq (b\sepi D)$ are tight for
the family-variable vector $\fam =\Id_{b\separ D}\in\FVP$ but not the inequality corresponding to $(b\sepi D)$.
This allows us to observe that any inequality in \eqref{eq.facet-empty} is facet-defining for $\FVP$.
Indeed, any such inequality is valid for $\FVP$ and, having fixed $(a\sepi B)\in\Fai$,
the respective inequality $-\fam (a\sepi B)\leq 0$ is tight for $|\Fai |$  affinely independent vectors,
namely the zero vector in ${\dv R}^{\Fai}$ and vectors $\Id_{b\separ D}$ for $(b\sepi D)\neq (a\sepi B)$.
The second step is to show that every facet $F$ of\/ $\FVP$ containing the empty graph is defined by \eqref{eq.facet-empty}. Former observations (i) and (ii) imply that the facet-defining inequality for $F$ must have the form
$\langle\obj ,\fam\rangle\leq 0$ with $\obj\in (-\infty,0]^{\Fai}$. Thus, the inequality is a conic combination of
those from \eqref{eq.facet-empty}. Since $F$ is assumed to be a facet, Corollary \ref{cor.facet=maxim} can be used
to show that at most one coefficient in the combination is non-zero. Indeed, if two coefficients $\obj (a\sepi B)$ and $\obj (b\sepi D)$
are non-zero, the above elementary fact implies for the inequalities $\obj (a\sepi B)\cdot\fam (a\sepi B)\leq 0$ and
$\sum_{(c\sepi E)\neq (a\sepi B)} \obj (c\sepi E)\cdot\fam (c\sepi E)\leq 0$ that the condition \eqref{eq.non-facet} from Corollary \ref{cor.facet=maxim}
is fulfilled with $w_{1}=\Id_{a\separ B}$ and $w_{2}=\Id_{b\separ D}$. On the other hand, at least one
coefficient must be non-zero, since otherwise $F=\FVP$. Therefore, the facet $F$ must be defined by one
of the inequalities in \eqref{eq.facet-empty}.

As concerns (iv), owing to the extension convention from \S\,\ref{sec.formal-def}, the statement means
$\obj (a\sepi B)\geq \obj (a\sepi A)$ for $a\in N$ and $A\subseteq B\subseteq N\setminus\{ a\}$.
Assume for a contradiction that $a\in N$ and $A\subset B\subseteq N\setminus\{ a\}$ exist with
$\obj (a\sepi B)<\obj (a\sepi A)$ and define $\tilde{\obj}\in {\dv R}^{\Fai}$ in the following way:
$$
\tilde{\obj }(b\sepi D):=
\left\{
\begin{array}{ll}
\obj (b\sepi D) & ~~~~\mbox{for $(b\sepi D)\in\Fai$, $(b\sepi D)\neq (a\sepi B)$,}\\
\obj (a\sepi A) & ~~~~\mbox{for $(b\sepi D)= (a\sepi B)$}.
\end{array}
\right.
$$
The next observation is that $\langle \tilde{\obj },\fam\rangle\leq u$ is a valid inequality for $\FVP$.
Specifically, given $G\in\DAG$, construct $\tilde{G}\in\DAG$ such that $\fam_{\tilde{G}}(a\sepi B)=0$
and $\langle \tilde{\obj },\fam_{G}\rangle=\langle \tilde{\obj },\fam_{\tilde{G}}\rangle$.
Indeed, if $\pa_{G}(a)\neq B$ then simply $\tilde{G}:=G$, otherwise put
$\pa_{\tilde{G}}(a)=A$ and $\pa_{\tilde{G}}(b)=\pa_{G}(b)$ for $b\in N\setminus\{ a\}$,
which gives
$$
\langle \tilde{\obj },\fam_{G}\rangle- \langle \tilde{\obj },\fam_{\tilde{G}}\rangle =
\tilde{\obj }(a\sepi B)-\tilde{\obj }(a\sepi A)=\obj (a\sepi A)-\obj (a\sepi A)=0\,.
$$
The definition of $\tilde{\obj }$ implies $\langle \tilde{\obj },\fam_{\tilde{G}}\rangle -\langle\obj ,\fam_{\tilde{G}}\rangle
= \{\tilde{\obj }(a\sepi B)-\obj (a\sepi B)\}\cdot\fam_{\tilde{G}}(a\sepi B)=0$. Because \eqref{eq.up-bou-ineq}
is valid for $\fam_{\tilde{G}}$ one can observe
$$
\langle \tilde{\obj },\fam_{G}\rangle= \langle \tilde{\obj },\fam_{\tilde{G}}\rangle =
\langle\obj ,\fam_{\tilde{G}}\rangle\leq u\,,
\quad \mbox{which was desired.}
$$
Thus, \eqref{eq.up-bou-ineq} is the sum of the valid inequality
$\langle \tilde{\obj },\fam\rangle\leq u$ with a positive multiple of the valid inequality
$-\fam (a\sepi B)\leq 0$, namely by $\beta :=\obj (a\sepi A)-\obj (a\sepi B)>0$.
The condition \eqref{eq.non-facet} from Corollary \ref{cor.facet=maxim}
is fulfilled with $w_{1}=0$ and $w_{2}=\Id_{a\separ B}$, which implies a contradictory
conclusion that \eqref{eq.up-bou-ineq} is not facet-defining.
\end{proof}

This implies the following observation.

\begin{corollary}\label{cor.increase-facet}\rm
Let $\setdags$ be a facet of $\FVP$ in the sense of Definition
\ref{def.face-DAG} which does not contain the empty graph.
Then $\setdags$ is closed under super-graphs in the sense:
$$
\mbox{if $G\in\setdags$ is a subgraph of $H\in\DAG$ ~~~~then $H\in\setdags$}.
$$
Moreover, for every $(a\sepi B)\in\Fai$, there exists $G\in\setdags$ with $\pa_{G}(a)=B$.
\end{corollary}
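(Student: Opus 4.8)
The plan is to exploit the structural information about the defining objective provided by Lemma~\ref{lem.increase-facet}. Since $\setdags$ is a facet not containing the empty graph, write its defining inequality in the standardized form $\langle\obj ,\fam\rangle\leq u$. By Lemma~\ref{lem.increase-facet}(i) the omission of the empty graph forces $u\neq 0$, and as $u\geq 0$ we conclude $u>0$; Lemma~\ref{lem.increase-facet}(iv) then guarantees that $\obj$ is non-negative and monotone, i.e.\ $\obj (a\sepi B)\geq\obj (a\sepi A)\geq 0$ whenever $A\subseteq B\subseteq N\setminus\{ a\}$, using the extension convention $\obj (a\sepi\emptyset)=0$. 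These two properties, non-negativity and monotonicity of $\obj$, are the engine for both assertions.

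For closure under super-graphs I would take $G\in\setdags$ with $G$ a subgraph of $H$, so that $\pa_{G}(a)\subseteq\pa_{H}(a)$ for every $a\in N$. Because $G$ lies on the facet, $\langle\obj ,\fam_{G}\rangle=\sum_{a\in N}\obj (a\sepi\pa_{G}(a))=u$. Comparing $H$ with $G$ coordinate by coordinate and invoking monotonicity (together with $\obj (a\sepi\emptyset)=0$ in the case $\pa_{G}(a)=\emptyset$) yields $\obj (a\sepi\pa_{H}(a))\geq\obj (a\sepi\pa_{G}(a))$ for each $a$, whence $\langle\obj ,\fam_{H}\rangle\geq u$. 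Since the inequality is valid we also have $\langle\obj ,\fam_{H}\rangle\leq u$, so equality holds and $H\in\setdags$. This part is essentially a direct computation once (iv) is available.

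For the second assertion I would argue by contradiction, and this is where I expect the only real subtlety. Fix $(a\sepi B)\in\Fai$ and suppose no $G\in\setdags$ satisfies $\pa_{G}(a)=B$; equivalently $\fam_{G}(a\sepi B)=0$ for every $G\in\setdags$. Then the whole face $\setdags$ is contained in the coordinate face defined by $-\fam (a\sepi B)\leq 0$, which by Lemma~\ref{lem.increase-facet}(iii) is itself a facet of $\FVP$. Thus we have one facet contained in another; by Lemma~\ref{lem.facet=maxim}, a facet strictly contained in another face forces that larger face to be all of $\FVP$, so the two facets must in fact coincide. But the corresponding set of graphs for the coordinate facet is $\{ G\in\DAG :\ \pa_{G}(a)\neq B\}$, which contains the empty graph (as $B\neq\emptyset$), contradicting the hypothesis that $\setdags$ does not. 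Hence some $G\in\setdags$ has $\pa_{G}(a)=B$.

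The main obstacle, such as it is, lies in recognizing the second claim as a comparison of two facets and applying the facet-maximality of Lemma~\ref{lem.facet=maxim} correctly; the first claim reduces to the termwise monotonicity estimate driven by Lemma~\ref{lem.increase-facet}(iv).
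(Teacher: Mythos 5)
Your proof is correct and takes essentially the same approach as the paper: closure under super-graphs follows from Lemma~\ref{lem.increase-facet}(i),(iv) via monotonicity of the objective (the paper merely reduces to the case where one parent set is enlarged, which your coordinatewise all-at-once comparison renders unnecessary), and the second claim is the same contradiction argument via Lemma~\ref{lem.facet=maxim} and the empty graph. The only cosmetic difference is that you invoke Lemma~\ref{lem.increase-facet}(iii) to rule out the containing coordinate face being all of $\FVP$, a point the paper leaves implicit.
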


The second statement in Corollary \ref{cor.increase-facet} is also derived
in \cite[Proposition\,4]{CJKB15} using slightly different arguments.

\begin{proof}
It is enough to verify the first claim in the case $H$ differs from $G$ just in just one parent
set, that is, in case $a\in N$ exists with $A=\pa_{G}(a)\subset\pa_{H}(a)=B$ and $\pa_{H}(b)=\pa_{G}(b)$ for
$b\in N\setminus\{ a\}$. By Lemma \ref{lem.increase-facet}(i), we know that $\setdags$ is given by the inequality
\eqref{eq.up-bou-ineq} with $u>0$. Thus, by Lemma \ref{lem.increase-facet}(iv), one can write
$\langle\obj ,\fam_{H}\rangle - \langle\obj ,\fam_{G}\rangle = \obj (a\sepi B)-\obj (a\sepi A)\geq 0$.
Assuming $G\in\setdags$, the inequality \eqref{eq.up-bou-ineq} is tight for $\fam_{G}$ and one has
$$
u=\langle\obj ,\fam_{G}\rangle \leq \langle\obj ,\fam_{H}\rangle \leq u
\qquad \mbox{because \eqref{eq.up-bou-ineq} is valid for $\fam_{H}$.}
$$
Hence, $\langle\obj ,\fam_{H}\rangle =u$, that is, \eqref{eq.up-bou-ineq}
is tight for $\fam_{H}$, saying that $H\in\setdags$.

As concern the second claim assume for a contradiction that
$(a\sepi B)\in\Fai$ exists with $\pa_{G}(a)\neq B$ for any $G\in\setdags$.
That means, $\setdags$ is contained in the face defined by $-\fam (a\sepi B)\leq 0$.
Since $\conv (\{\,\fam_{G}\in {\dv R}^{\Fai} :\ G\in\setdags\})$ is a facet of\/ $\FVP$, by Lemma \ref{lem.facet=maxim}, observe that it coincides with the face
defined by $-\fam (a\sepi B)\leq 0$. This implies a contradictory conclusion
that $\setdags$ contains the empty graph.
\end{proof}

An obvious modification of natural convexity constraints gives the following valid inequalities for
the family-variable polytope:
\begin{equation}
\sum_{B\,:\,\emptyset\neq B\subseteq N\setminus\{ a\}} \fam (a\sepi B)\leq 1\qquad
\mbox{for any $a\in N$}.
\label{eq.mod-conv}
\end{equation}
Except for a degenerate case $n=2$, these inequalities are facet-defining; see also
\cite[Proposition\,3]{CJKB15}.

\begin{lemma}\label{lem.mod-convex-facet}\rm
If $n\geq 3$ then, for every $a\in N$, \eqref{eq.mod-conv} defines a facet of\/ $\FVP$.
\end{lemma}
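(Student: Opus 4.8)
The plan is to exploit that $\FVP$ is full-dimensional in ${\dv R}^{\Fai}$, since the excerpt records $\dim(\FVP)=|\Fai|$. For such a polytope a valid inequality $\langle\obj,\fam\rangle\leq u$ with $\obj\neq 0$ is facet-defining precisely when every affine equation $\langle\obj^{\prime},\fam\rangle=u^{\prime}$ that holds at all vertices lying on the face is a scalar multiple of $\langle\obj,\fam\rangle=u$. Here the defining data are $\obj=\sum_{\emptyset\neq B\subseteq N\setminus\{a\}}\Id_{a\separ B}$ and $u=1$, and validity is immediate because $\sum_{B}\fam_{G}(a\sepi B)=1$ when $\pa_{G}(a)\neq\emptyset$ and $=0$ otherwise. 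Thus the face $\setdags$ in question consists of exactly those $G\in\DAG$ with $\pa_{G}(a)\neq\emptyset$; it is proper and nonempty (it misses the empty graph but contains graphs in which $a$ has a parent). First I would fix an arbitrary $\obj^{\prime}\in{\dv R}^{\Fai}$ and $u^{\prime}\in{\dv R}$ with $\langle\obj^{\prime},\fam_{G}\rangle=\sum_{c\in N}\obj^{\prime}(c\sepi\pa_{G}(c))=u^{\prime}$ for every $G\in\setdags$, and aim to show $\obj^{\prime}$ is constant equal to $u^{\prime}$ on the $a$-families and vanishes on all the others.

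The first and decisive step is to kill the off-$a$ coordinates: for each $c\in N\setminus\{a\}$ and each $E\subseteq N\setminus\{c\}$ I would exhibit two graphs in $\setdags$ differing only in the parent set of $c$, one with $\pa(c)=E$ and one with $\pa(c)=\emptyset$. Concretely, using $n\geq 3$ pick $d\in N\setminus\{a,c\}$ and take the total order $d,a,\dots,c$ with $c$ last; set $\pa(a)=\{d\}$, leave every other node but $c$ parentless, and let $\pa(c)$ be either $E$ or $\emptyset$. Both graphs are consonant with this order, hence acyclic, and both keep $\pa(a)=\{d\}\neq\emptyset$, so both lie in $\setdags$. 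Subtracting the two instances of the defining equation leaves only the $c$-term, giving $\obj^{\prime}(c\sepi E)=\obj^{\prime}(c\sepi\emptyset)=0$ by the extension convention of \S\,\ref{sec.formal-def}. Ranging over all such $c$ and $E$ shows $\obj^{\prime}(c\sepi E)=0$ for every $c\neq a$.

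With the non-$a$ coordinates gone, the equation collapses to $\obj^{\prime}(a\sepi\pa_{G}(a))=u^{\prime}$ for all $G\in\setdags$. Since any nonempty $E\subseteq N\setminus\{a\}$ is realizable as $\pa_{G}(a)$ (order the elements of $E$ first, then $a$, then the rest), I conclude $\obj^{\prime}(a\sepi E)=u^{\prime}$ for every nonempty $E$. Hence $\obj^{\prime}=u^{\prime}\cdot\obj$ and $u^{\prime}=u^{\prime}\cdot u$, i.e. $(\obj^{\prime},u^{\prime})$ is a scalar multiple of $(\obj,u)$, which is exactly the facet criterion.

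The step I expect to require the most care is the coordinate-killing construction, because I must change a single parent set while simultaneously preserving acyclicity and the membership condition $\pa(a)\neq\emptyset$. Ordering $c$ last makes the toggle of $\pa(c)$ automatically acyclic for every $E$, and reserving a separate node $d$ as a parent of $a$ protects face membership; it is precisely the availability of a spare node $d\neq a,c$ that forces the hypothesis $n\geq 3$. Indeed for $n=2$ the face reduces to a single vertex, so the inequality is not facet-defining, matching the excluded degenerate case. An equivalent route would be to produce $|\Fai|$ affinely independent DAG-codes on the face directly, but the unique-supporting-hyperplane argument above packages the same combinatorial constructions more economically.
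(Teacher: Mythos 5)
Your proof is correct, but it follows a genuinely different route from the paper's. You invoke the indirect characterization of facets of a full-dimensional polytope --- every affine equation satisfied by the whole face must be a scalar multiple of the defining equation --- and pin down an arbitrary such pair $(\obj^{\prime},u^{\prime})$ by toggling one parent set at a time within the face: a spare node $d$ keeps $\pa (a)$ nonempty while $\pa (c)$ switches between $E$ and $\emptyset$, and then graphs in which only $a$ has parents determine the $a$-coordinates. The paper instead uses the direct method recalled in \S\,\ref{sec.formal-def}: it exhibits $|\Fai |$ vectors lying on the face, namely $\Id_{a\separ B}$ for the $a$-families $(a\sepi B)$ and $\Id_{a\separ N\setminus\{ a,b\}}+\Id_{b\separ D}$ for the families $(b\sepi D)$ with $b\neq a$, and notes that these vectors linearly span ${\dv R}^{\Fai}$ (subtracting the fixed summand $\Id_{a\separ N\setminus\{ a,b\}}$ recovers each $\Id_{b\separ D}$), hence are linearly, and therefore affinely, independent. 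Both arguments exploit exactly the same combinatorial resource --- the availability of a third node when $n\geq 3$, so that $a$ can retain a parent while the remaining families vary --- and both would break at $n=2$ for the same reason. The trade-off: the paper's spanning set is so transparent that the direct proof takes three lines, whereas your hyperplane-uniqueness argument needs no independence verification, remains usable when a clean spanning set is not apparent, and makes explicit why the degenerate case $n=2$ fails (there the face collapses to a single vertex).
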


\begin{proof}
We find $|\Fai |$ affinely independent vectors on the face. Specifically,
for $\emptyset\neq B\subseteq N\setminus\{ a\}$ put $\fam_{(a\sepi B)}=\Id_{a\separ B}$,
while for $b\in N$, $b\neq a$ and $(b\sepi D)\in\Fai$ put
$\fam_{(b\sepi D)}=\Id_{a\separ N\setminus\{ a,b\}}+\Id_{b\separ D}$.
These vectors linearly generate ${\dv R}^{\Fai}$. Hence, they are linearly independent,
and, therefore, affinely independent.
\end{proof}

\section{Score equivalence concept}\label{sec.SE-defin}
The score-based approach to structural learning Bayesian networks consists
in maximization of a function $G\in\DAG\mapsto\calQ (G,D)$, where $D$ is the database
of observed values and $\calQ$ a suitable {\em quality criterion\/}, also called a {\em scoring criterion} \cite[p.\ 437]{Nea04},
which evaluates how the graph $G$ fits the database. The criteria used in practice
turn out to be affine functions of the family-variable vector,
that is, $\calQ (G,D)=k+\langle\obj ,\fam_{G}\rangle_{\Fai}$ with
$k\in {\dv R}$ and $\obj\in {\dv R}^{\Fai}$ encoding both $D$ and $\calQ$.
Thus, the learning task turns into an LP problem to maximize a linear function over the
vertices of the family-variable polytope $\FVP$.
\smallskip

Since the goal is typically to learn the structure, described by a Markov equivalence class of graphs,
most of criteria used in practice do not distinguish between Markov equivalent graphs, that is, one has
$$
\calQ (G,D)=\calQ (H,D)\quad \mbox{whenever $G$ and $H$ are Markov equivalent.}
$$
In the machine learning community, quality criteria satisfying the above condition are called {\em score equivalent\/} \cite{Bou95,Chi02}.
This motivates the following terminology.

\begin{definition}[score equivalent objective]\rm\label{def.SEO} ~~\\
We say that a vector $\obj\in{\dv R}^{\Fai}$ is a {\em score equivalent objective\/}
(abbreviated below as {\em an SE objective}) if it satisfies
\begin{equation}
\forall\, G,H\in\DAG\quad G\sim H ~\Rightarrow ~
\langle\obj ,\fam_{G}\rangle = \langle\obj ,\fam_{H}\rangle\,.
\label{eq.SEO}
\end{equation}
Clearly, the set of SE objectives is a linear subspace of ${\dv R}^{\Fai}$.
\end{definition}

The faces and facets of\/ $\FVP$ are defined in terms of normal vectors, which leads to the following concept.

\begin{definition}[SE face/facet, closed under Markov equivalence]\rm\label{def.s-SEF} ~~\\
We will name a face $F$ of\/ $\FVP$ {\em score equivalent\/} (SE) if there exists
an SE objective $\obj\in{\dv R}^{\Fai}$ and a constant $u\in {\dv R}$ such that
two conditions from Definition \ref{def.face} hold for $\POL =\FVP$.
By an {\em SE facet} is meant a facet of\/ $\FVP$ which is an SE face.

A related concept is the next one: a set $\setdags\subseteq\DAG$ of acyclic directed graphs is {\em closed under Markov equivalence\/} if
\begin{equation}
\forall\, G,H\in\DAG\quad G\sim H \qquad
G\in\setdags ~~~\Rightarrow ~~H\in\setdags .
\label{eq.weakSE}
\end{equation}
\end{definition}
\smallskip

\begin{remark}\rm ~
Note that an objective determining a face is not uniquely determined.
Only in the case of a facet (of a full-dimensional polytope), is it unique up to a positive multiple. Therefore, one has to be careful
when testing score equivalence of a face $F$ which is not a facet, because one of the face-defining
objectives for $F$ could be SE and another objective for $F$ need not be.
Our definition requires the {\em existence\/} of at least one SE objective defining the face.
\end{remark}

The following observation is straightforward.

\begin{lemma}\label{lem.SE>WE}\rm
A set of graphs on an SE face is closed under Markov equivalence.
\end{lemma}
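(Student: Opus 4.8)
The plan is to unwind the definitions directly. Let $F$ be an SE face of\/ $\FVP$ and let $\setdags =\{ G\in\DAG :\ \fam_{G}\in F\}$ be its corresponding set of graphs under the face/graph identification from \S\,\ref{sec.formal-def}. By Definition \ref{def.s-SEF}, the fact that $F$ is SE guarantees the existence of (at least) one SE objective $\obj\in {\dv R}^{\Fai}$ together with a constant $u\in {\dv R}$ such that $\langle\obj ,v\rangle\leq u$ is valid for all $v\in\FVP$ and $F=\{\, v\in\FVP :\ \langle\obj ,v\rangle =u\,\}$. I would fix this particular witnessing pair $(\obj ,u)$ for the remainder of the argument.

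The core step is then a short chain of equalities. Take any $G\in\setdags$ and any $H\in\DAG$ with $H\sim G$; the goal is to show $H\in\setdags$. Since $G\in\setdags$ means $\fam_{G}\in F$, the tightness half of the face description gives $\langle\obj ,\fam_{G}\rangle =u$. Because $\obj$ is an SE objective, the defining property \eqref{eq.SEO} applied to the Markov-equivalent pair $G\sim H$ yields $\langle\obj ,\fam_{H}\rangle =\langle\obj ,\fam_{G}\rangle =u$. As $H\in\DAG$, its code $\fam_{H}$ lies in $\FVP$, so it attains the valid inequality with equality, i.e.\ $\fam_{H}\in F$, and hence $H\in\setdags$. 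This is precisely condition \eqref{eq.weakSE}, so $\setdags$ is closed under Markov equivalence.

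The only subtlety worth flagging is the one raised in the Remark immediately preceding the lemma: an objective defining a face need not be unique, and a face may admit both SE and non-SE defining objectives. The argument crucially uses the \emph{specific} SE objective whose existence the definition of an SE face supplies, since a non-SE objective defining the same face would not license the middle equality via \eqref{eq.SEO}. For this reason I would commit to one SE witness $\obj$ at the outset and use only that one throughout. Beyond this bookkeeping there is no real obstacle — the statement follows immediately by combining the validity/tightness description of a face with the score-equivalence identity, which explains why the lemma is flagged as ``straightforward.''
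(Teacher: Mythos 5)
Your proof is correct and is essentially identical to the paper's own one-line argument: fix the witnessing SE objective $\obj$ and bound $u$, use tightness at $\fam_{G}$, apply \eqref{eq.SEO} to get tightness at $\fam_{H}$, and conclude $\fam_{H}\in F$. The remark about committing to a single SE witness is sound bookkeeping but does not change the substance; the paper's proof implicitly does the same.
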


\begin{proof}
Given an SE objective $\obj$ with $F =\{\,\fam\in\FVP\, :\ \langle\obj ,\fam\rangle =u\}$
for some $u\in {\dv R}$ and $G\in\DAG$ with $\langle\obj ,\fam_{G}\rangle =u$, \eqref{eq.SEO}
implies for $H\sim G$ that $\langle\obj ,\fam_{H}\rangle =u$.
\end{proof}

An open question is whether the converse is true.

\begin{conjecture}\label{conj.SE=WEface} ~~\\[0.2ex]
Every face $\setdags\subseteq\DAG$ of\/ $\FVP$ closed under Markov equivalence is an SE face.
\end{conjecture}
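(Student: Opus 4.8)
The plan is to realize the face $F$ (with graph set $\setdags$) by an SE objective obtained by \emph{pulling back} a supporting functional of the characteristic-imset polytope. Write $L:{\dv R}^{\Fai}\to{\dv R}^{\Cai}$ for the linear map $\fam\mapsto\c_{\fam}$ of \eqref{eq.eta-to-char}, so that $\c_{G}=L(\fam_{G})$ and $\CIP=L(\FVP)$. The crucial immediate observation is that \emph{every} pulled-back objective $\obj:=L^{*}\ym$, with $\ym\in{\dv R}^{\Cai}$, is automatically SE: if $G\sim H$ then $\c_{G}=\c_{H}$, whence $\langle\obj,\fam_{G}\rangle=\langle\ym,\c_{G}\rangle=\langle\ym,\c_{H}\rangle=\langle\obj,\fam_{H}\rangle$. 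Moreover, if $\ym$ defines a face $\Phi=\{w\in\CIP:\langle\ym,w\rangle=u\}$ of $\CIP$, then on $\FVP$ the pulled-back inequality $\langle\obj,\fam\rangle\le u$ is tight exactly at those $\fam$ with $L(\fam)\in\Phi$; thus $\obj$ defines the face $L^{-1}(\Phi)\cap\FVP$ of $\FVP$. Consequently it suffices to produce a face of $\CIP$ whose preimage is precisely $F$.

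First I would reduce everything to a single geometric claim about $\CIP$. Since $\setdags$ is closed under Markov equivalence, it is a union of whole Markov equivalence classes, so the set $\mathcal{C}_{\setdags}:=\{\c_{G}:G\in\setdags\}$ consists of distinct vertices of $\CIP$ and satisfies $\setdags=\{G\in\DAG:\c_{G}\in\mathcal{C}_{\setdags}\}$; also $L(F)=\conv(\mathcal{C}_{\setdags})$. Because each $\c_{G}$ is a vertex of $\CIP$ and no characteristic imset is a convex combination of the others, the vertices of $\conv(\mathcal{C}_{\setdags})$ are exactly $\mathcal{C}_{\setdags}$. Hence, \emph{if} $\conv(\mathcal{C}_{\setdags})$ is a face of $\CIP$, its exposing functional $\ym$ (every face of a polytope is exposed) pulls back to an SE objective $\obj=L^{*}\ym$ whose tight graphs are precisely those $G$ with $\c_{G}\in\conv(\mathcal{C}_{\setdags})$, i.e.\ exactly $\setdags$; this is the desired SE objective for $F$. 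So the whole conjecture reduces to showing that $\conv(\mathcal{C}_{\setdags})$ is a face of $\CIP$.

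To attack this I would use the extreme-subset characterization of faces: a convex subset $S$ of a polytope is a face iff, whenever a relative-interior point of a segment $[w_{1},w_{2}]$ with $w_{1},w_{2}\in\CIP$ lies in $S$, both endpoints lie in $S$. Taking $S=\conv(\mathcal{C}_{\setdags})=L(F)$ and transporting the test through $L$ (lifting $w_{1},w_{2}$ to $\fam_{1},\fam_{2}\in\FVP$ and then trying to use that $F$ is a face of $\FVP$), this property becomes the set-theoretic identity $F=L^{-1}(L(F))\cap\FVP$ --- that is, the promotion of vertex-level closure to the entire polytope.

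The hard part will be exactly this promotion. Closure under Markov equivalence is a combinatorial condition on \emph{vertices}, whereas a face constrains \emph{all} convex combinations; and since the vertices $\c_{G}$ of $\CIP$ are affinely dependent, a convex combination $\sum_{G}\mu_{G}\c_{G}$ that happens to land in $\conv(\mathcal{C}_{\setdags})$ need not, a priori, have $\mu_{G}=0$ for every $G\notin\setdags$. Closing this gap is where the special geometry of $\CIP$ must enter. My route would be to exploit the affine isomorphism \eqref{eq.stan-to-char} between $\CIP$ and the standard-imset polytope together with the known description of the supporting functionals of the latter by supermodular set functions (cf.\ \cite{SVH10}): starting from any objective $\obj_{0}$ defining $F$, I would project it into the space of SE objectives --- equivalently, manufacture a standardized supermodular function $\ym$ --- and then argue, using that $\setdags$ is a union of Markov equivalence classes, that this symmetrization moves neither graph into nor out of the tight set. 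Guaranteeing that the symmetrized supermodular objective exposes \emph{exactly} the face with tight set $\setdags$ (it could a priori expose a larger or smaller face of $\CIP$) is, I expect, the principal technical obstacle, and the point at which supermodularity, rather than mere vertex bookkeeping, becomes indispensable.
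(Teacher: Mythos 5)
You were asked to prove Conjecture~\ref{conj.SE=WEface}, and the first thing to be clear about is that the paper itself leaves this statement \emph{open}: it is proved only for facets (Theorem~\ref{thm.WE=SEfacet}), verified exhaustively for $n=3$, partially by computer for $n=4$, with failed counter-example searches for $n=5$. So there is no proof in the paper to compare against, and the real question is whether your argument closes the gap the authors could not. It does not, because your ``reduction'' is a reformulation rather than a reduction. The pullback machinery you set up --- every $\obj=L^{*}\ym$ is SE, the face it defines on $\FVP$ is the preimage of the face that $\ym$ defines on $\CIP$, and the vertex bookkeeping via Markov closure --- is precisely the content of the paper's Lemma~\ref{lem.fac-cores} and Corollary~\ref{cor.fac-cores}, which establish an inclusion-preserving one-to-one correspondence between SE faces of\/ $\FVP$ and faces of $\CIP$. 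Under that correspondence, the claim you defer to --- that $\conv (\{\c_{G}: G\in\setdags\})$ is a face of $\CIP$, equivalently your identity $F=L^{-1}(L(F))\cap\FVP$ --- is not a smaller geometric claim; it is \emph{exactly equivalent} to the conjecture. Everything before your last paragraph restates the problem in the $\CIP$ frame, and what you call ``the principal technical obstacle'' is the entire problem.

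The route you sketch for that obstacle also cannot work as described. You propose to take an arbitrary objective $\obj_{0}$ defining $F$, project it into the SE subspace of Lemma~\ref{lem.SEF} --- ``equivalently, manufacture a standardized supermodular function $\ym$'' --- and argue that the tight set is unchanged. Two concrete objections. First, linear projection onto the SE subspace has no reason to preserve the argmax: for a non-SE $\obj_{0}$ the values $\langle\obj_{0},\fam_{H}\rangle$ vary within a Markov equivalence class, and after projection vertices outside $\setdags$ can become tight; closure of $\setdags$ under Markov equivalence, a purely vertex-level condition, does nothing by itself to prevent this --- if it did, the conjecture would follow in a few lines and would not be open. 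Second, supermodularity is the wrong tool for general faces: by Lemma~\ref{lem.face-supermod}, any SE objective built via \eqref{eq.SEF} from a standardized \emph{supermodular} $\ym$ is automatically tight at all full graphs, so the face it defines contains the whole class of full graphs. A general Markov-closed face need not: the singleton face consisting of the empty graph is closed under Markov equivalence and is an SE face (take $\ym (S)=-|S|$ for $|S|\geq 2$ in \eqref{eq.SEF}), yet no supermodular $\ym$ can define it. This is exactly why the paper's facet argument does not generalize: Theorem~\ref{thm.WE=SEfacet} uses facet-specific results (Lemma~\ref{lem.increase-facet}(iii) and Corollary~\ref{cor.increase-facet}) to force a Markov-closed \emph{facet} to contain all full graphs, and only then invokes Lemma~\ref{lem.full}; for non-facet faces that mechanism, and with it the supermodular description, is simply unavailable.
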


We managed to confirm the conjecture for facets; see Theorem \ref{thm.WE=SEfacet} in \S\,\ref{sec.weak-proof}.
The arguments there are slightly special and do not apply to general faces.
However, we were able to verify Conjecture \ref{conj.SE=WEface} for $n=|N|=3$ by an exhaustive analysis.
By means of a computer, we verified for $n=4$ that every inclusion-submaximal face among those
closed under Markov equivalence is already an SE face. Our computational attempts to find a
counter-example for $n=5$ have not been successful.

\section{SE objectives characterization}\label{sec.SE-char}

Recall that to present the characterization of the linear space of SE objectives in an elegant way we use
the extension conventions from \S\,\ref{sec.formal-def}.

\begin{lemma}\rm\label{lem.SEF}
A vector $\obj\in{\dv R}^{\Fai}$ is an SE objective if and only if
either of the following two conditions holds. The two conditions are
equivalent: the first holds if and only if the second does.
\begin{itemize}
\item[(a)] For any $\Zz\subseteq N$ and $a,b\in N\setminus\Zz$, $a\neq b$ one has
\begin{equation}
\obj (b\sepi\{ a\}\cup\Zz ) + \obj (a\sepi\Zz ) =
\obj (a\sepi \{ b\}\cup\Zz ) +\obj (b\sepi\Zz )\,.
\label{eq.SE-dual}
\end{equation}
\item[(b)] There exists $\ym\in{\dv R}^{\Cai}$ such that
\begin{equation}
\obj (a\sepi B) = \ym (\{ a\}\cup B)-\ym (B)\qquad
\mbox{for any $a\in N$, $B\subseteq N\setminus\{ a\}$.}
\label{eq.SEF}
\end{equation}
\end{itemize}
In particular, the dimension of the linear subspace of SE objectives is 
$2^{n}-n-1$.
\end{lemma}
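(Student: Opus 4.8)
The plan is to prove the algebraic equivalence (a)\,$\Leftrightarrow$\,(b) directly, and then to tie it to score equivalence through the two one-directional implications SE\,$\Rightarrow$\,(a) and (b)\,$\Rightarrow$\,SE; the dimension count then falls out of characterization (b). Throughout I use that, by the extension convention $\obj(c\sepi\emptyset)=0$, one may write $\langle\obj,\fam_{G}\rangle=\sum_{c\in N}\obj(c\sepi\pa_{G}(c))$, nodes with empty parent set contributing nothing.

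For (a)\,$\Leftrightarrow$\,(b), the implication (b)\,$\Rightarrow$\,(a) is an immediate telescoping: substituting \eqref{eq.SEF} collapses both sides of \eqref{eq.SE-dual} to $\ym(\{a,b\}\cup Z)-\ym(Z)$. The substantive direction is (a)\,$\Rightarrow$\,(b), where I construct the potential $\ym$ by accumulation. Fixing a total order on $N$, I set $\ym(S)$ to be the sum of the increments $\obj(s_{i}\sepi\{s_{1},\dots,s_{i-1}\})$ obtained by inserting the elements of $S$ in that order; the convention $\obj(\cdot\sepi\emptyset)=0$ makes $\ym(S)=0$ for $|S|\leq 1$, so $\ym$ genuinely lies in ${\dv R}^{\Cai}$. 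The crux is that \eqref{eq.SE-dual} says precisely that interchanging two consecutive insertion steps leaves the partial sum unchanged; since adjacent transpositions generate the symmetric group, $\ym(S)$ does not depend on the chosen order. Evaluating $\ym(\{a\}\cup B)$ by inserting the elements of $B$ first and $a$ last then yields $\ym(\{a\}\cup B)-\ym(B)=\obj(a\sepi B)$, i.e.\ \eqref{eq.SEF}. I expect this order-independence argument to be the main obstacle.

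To connect with score equivalence, for necessity SE\,$\Rightarrow$\,(a) I fix $Z\subseteq N$ and distinct $a,b\in N\setminus Z$ and exhibit one pair of Markov equivalent graphs realizing the difference in \eqref{eq.SE-dual}. Ordering $Z$ as a chain, I take $\pa_{G}(a)=Z$, $\pa_{G}(b)=\{a\}\cup Z$, and leave all other nodes parentless; then $a\to b$ is a covered edge whose reversal produces $H$ with $\pa_{H}(a)=\{b\}\cup Z$, $\pa_{H}(b)=Z$ and all remaining families unchanged. Since $\{a,b\}\cup Z$ is a clique in both graphs, $G$ and $H$ have the same adjacencies and no immoralities, hence $G\sim H$; as only the families of $a$ and $b$ differ, $\langle\obj,\fam_{G}\rangle-\langle\obj,\fam_{H}\rangle$ is exactly the difference of the two sides of \eqref{eq.SE-dual}, which SE forces to vanish. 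For sufficiency I use (b)\,$\Rightarrow$\,SE: writing $\Phi\colon{\dv R}^{\Cai}\to{\dv R}^{\Fai}$ for $(\Phi\ym)(a\sepi B)=\ym(\{a\}\cup B)-\ym(B)$, pairing against \eqref{eq.def-stand} gives the clean identity $\langle\Phi\ym,\fam_{G}\rangle=\ym(N)-\langle\ym,\u_{G}\rangle$. Because $G\sim H$ implies $\c_{G}=\c_{H}$ and hence $\u_{G}=\u_{H}$ by the one-to-one correspondence \eqref{eq.stan-to-char}, every $\Phi\ym$ is an SE objective.

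These implications together show that the space of SE objectives is exactly the image of $\Phi$. For the dimension it remains to check that $\Phi$ is injective: if $\Phi\ym=0$ then $\ym(\{a\}\cup B)=\ym(B)$ for all admissible $a,B$, and induction on $|S|$, based at $\ym(S)=0$ for $|S|\leq 1$, forces $\ym\equiv 0$. Consequently the dimension of the SE subspace equals $|\Cai|=2^{n}-n-1$, as claimed.
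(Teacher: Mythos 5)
Your proof is correct, but it closes the equivalence by a genuinely different route than the paper. The paper proves SE $\Leftrightarrow$ (a) in one stroke by invoking the full strength of Chickering's transformational characterization: Markov equivalence is \emph{generated} by covered arc reversals, each reversal changing $\fam_{G}$ exactly by $\Id_{b\separ\{a\}\cup\Zz}+\Id_{a\separ\Zz}-\Id_{a\separ\{b\}\cup\Zz}-\Id_{b\separ\Zz}$; the equivalence (a) $\Leftrightarrow$ (b) is then pure algebra, with $\ym(D)$ built by induction on $|D|$ from the constancy of $b\mapsto\obj(b\sepi D\setminus\{b\})+\ym(D\setminus\{b\})$ on $D$. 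You instead prove only the easy implication SE $\Rightarrow$ (a), exhibiting one Markov equivalent pair per triplet $(a,b,\Zz)$ and checking equivalence directly from the definition (same adjacencies and immoralities), and you return from (b) to SE through the standard imset: the pairing identity $\langle\Phi\ym,\fam_{G}\rangle=\ym(N)-\langle\ym,\u_{G}\rangle$ obtained from \eqref{eq.def-stand}, combined with the fact recalled in \S\,\ref{sec.notation} that $G\sim H$ implies $\c_{G}=\c_{H}$ and hence $\u_{G}=\u_{H}$. Closing the cycle SE $\Rightarrow$ (a) $\Rightarrow$ (b) $\Rightarrow$ SE avoids the hard direction of Chickering's theorem altogether, at the price of importing the imset-invariance fact; your pairing identity also anticipates the computation the paper performs later in the proof of Lemma~\ref{lem.fac-cores}. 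Your construction of $\ym$ by insertion sums, well defined by order-independence under adjacent transpositions, is essentially the paper's induction in a different guise, and your injectivity argument for $\Phi$ makes explicit the dimension count the paper dismisses as evident. One wording caveat: in your SE $\Rightarrow$ (a) step, a directed path on $\Zz$ would not make $\{a,b\}\cup\Zz$ a clique; you need the full DAG on $\Zz$ consonant with the chosen order (parents equal to all predecessors) for your ``no immoralities'' justification to be literally true---though even under the path reading the two graphs still share skeleton and immoralities, so the conclusion survives either way.
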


\begin{proof}
The condition \eqref{eq.SEO} for $\obj\in{\dv R}^{\Fai}$ means
$\langle\obj ,\fam_{G}-\fam_{H}\rangle =0$ if $G,H\in\DAG$ are such that $G\sim H$.
A well-known transformational characterization of Markov equivalence
\cite[Theorem 2]{Chi95} says that $G\sim H$ if and only if there exists a sequence $G=G_{1},\ldots , G_{m}=H$, $m\geq 1$
in $\DAG$ such that, for $i=1,\ldots ,m-1$, the graph $G_{i+1}$ is obtained from $G_{i}$
by ``covered arc reversal". This means that $G_{i}$ has an arrow $a\to b$ with
$\pa_{G_{i}} (b)=\{ a\}\cup\pa_{G_{i}} (a)$ and $G_{i+1}$ is obtained from $G_{i}$ by
replacing $a\to b$ in $G_{i}$ by $b\to a$ in $G_{i+1}$; the remaining arrows are unchanged.
In particular, $G_{i}\sim G_{i+1}$ and, provided $\Zz =\pa_{G_{i}} (a)$ one has
$$
\fam_{G_{i}}-\fam_{G_{i+1}} = \Id_{b\separ \{ a\}\cup\Zz} + \Id_{a\separ\Zz}
-\Id_{a\separ \{ b\}\cup\Zz} - \Id_{b\separ\Zz}.
$$
Hence, we easily derive that \eqref{eq.SEO} holds for $\obj\in{\dv R}^{\Fai}$ iff
\eqref{eq.SE-dual} holds.

It remains to show that \eqref{eq.SE-dual} is equivalent to the existence of
$\ym\in{\dv R}^{\Cai}$ such that $\obj $ is given by \eqref{eq.SEF}. The sufficiency
of \eqref{eq.SEF} is easy: then both LHS and RHS in \eqref{eq.SE-dual} have the form
$\ym (\{a,b\}\cup\Zz )-\ym (\Zz )$.

The necessity of \eqref{eq.SEF} can be shown by an inductive construction. Take
$\Zz =\emptyset$ in \eqref{eq.SE-dual} and get $\obj (b\sepi\{ a\})=\obj (a\sepi \{ b\})$. One
can put $\ym (\{a,b\}) :=\obj (b\sepi\{ a\})$ for any pair of distinct $a,b\in N$.
Thus, owing to the above conventions, \eqref{eq.SEF} holds in case $|B|\leq 1$.
To confirm \eqref{eq.SEF} for $B$ with $|B|=r\geq 2$ accept the inductive hypothesis that it
holds for $B^{\prime}$ with $|B^{\prime}|\leq r-1$. The task is to define $m(D)$ for $D\subseteq N$
with $|D|=r+1$ so that \eqref{eq.SEF} holds for $B$ with $|B|\leq r$.
Having fixed such a set $D$, for any pair
of distinct elements $a,b\in D$ put $\Zz =D\setminus\{ a,b\}$
and observe from \eqref{eq.SE-dual} by means of the inductive premise:
$$
\obj (b\sepi\{ a\}\cup\Zz ) + \ym (\{a\}\cup\Zz ) - \ym (\Zz )=
\obj (a\sepi \{ b\}\cup\Zz ) +\ym (\{b\}\cup\Zz ) -\ym (\Zz )\,.
$$
The cancellation of $\ym (\Zz )$ implies the function
$b\mapsto \obj (b\sepi D\setminus\{ b\}) + \ym (D\setminus\{ b\})$ for $b\in D$ is constant
on $D$. Thus, one can put $\ym (D):= \obj (b\sepi D\setminus\{ b\}) + \ym (D\setminus\{ b\})$ for
any such $b\in D$, which verifies the inductive step.

The correspondence between $\obj$ and $\ym$ in \eqref{eq.SEF} is evidently a one-to-one linear mapping,
which implies the claim about the dimension.
\end{proof}

\begin{corollary}\rm\label{cor.SEF}
Let $\obj\in{\dv R}^{\Fai}$ be an SE objective and let $\ym\in{\dv R}^{\Cai}$ satisfy \eqref{eq.SEF}.
Then for any $T\in\Cai$ and arbitrary $b\in T$ with $R:=T\setminus\{ b\}$ one has
\begin{equation}
\sum_{\emptyset\neq K\subseteq R}\ (-1)^{|R\setminus K|}\cdot \obj (b\sepi K) =
\sum_{L\in\Cai :\, L\subseteq T}\ (-1)^{|T\setminus L|}\cdot \ym (L)\,.
\label{eq.SE-obj}
\end{equation}
In particular, the LHS of \eqref{eq.SE-obj} does not depend on the choice
of $b\in T$.
\end{corollary}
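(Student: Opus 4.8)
The plan is to substitute the defining relation \eqref{eq.SEF} into the left-hand side of \eqref{eq.SE-obj} and then reorganize the resulting alternating sum into a single sum over \emph{all} subsets of $T$, exploiting the extension conventions from \S\,\ref{sec.formal-def} to freely add or discard terms indexed by sets of cardinality at most one. First I would write, for each nonempty $K\subseteq R$, the identity $\obj (b\sepi K)=\ym (\{ b\}\cup K)-\ym (K)$ coming from \eqref{eq.SEF}, so that the LHS of \eqref{eq.SE-obj} splits as a difference of two alternating sums over the nonempty subsets $K\subseteq R$.

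In the first of these sums I substitute $L:=\{ b\}\cup K$. Since $K$ ranges over the nonempty subsets of $R$, the set $L$ ranges over the subsets of $T$ that contain $b$ and have cardinality at least two, with $K=L\setminus\{ b\}$. A short index check gives $|R\setminus K|=|R|-|K|=|T|-|L|=|T\setminus L|$, so this piece equals $\sum_{L\subseteq T,\, b\in L,\, |L|\geq 2}(-1)^{|T\setminus L|}\,\ym (L)$; because $\ym (\{ b\})=0$ by convention, I may drop the cardinality restriction and sum over all $L\subseteq T$ containing $b$. For the second sum, $K$ runs over the nonempty subsets of $T$ avoiding $b$, and the parity relation $|R\setminus K|=|T\setminus K|-1$ flips the sign; after the subtraction this contributes $+\sum_{L\subseteq T,\, b\notin L,\, L\neq\emptyset}(-1)^{|T\setminus L|}\,\ym (L)$. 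Adding the two pieces yields the full sum over subsets of $T$ except $L=\emptyset$, and since $\ym (\emptyset)=0$ I may restore that term as well, obtaining $\sum_{L\subseteq T}(-1)^{|T\setminus L|}\,\ym (L)$. Finally, the convention $\ym (L)=0$ for $|L|\leq 1$ lets me restrict the index set to $L\in\Cai$, which is exactly the RHS of \eqref{eq.SE-obj}. The closing assertion is then immediate: the RHS is manifestly a function of $T$ and $\ym$ alone, so the LHS cannot depend on the choice of $b\in T$.

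The computation is essentially routine bookkeeping, and the only point demanding genuine care---and the main potential pitfall---is the disciplined tracking of signs together with the correct use of the extension conventions to justify padding each reindexed partial sum with its vanishing $|L|\leq 1$ terms, so that the two pieces merge cleanly into one alternating sum over all subsets of $T$ rather than leaving boundary terms unaccounted for.
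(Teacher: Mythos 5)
Your proof is correct and follows essentially the same route as the paper's: substitute \eqref{eq.SEF}, split the alternating sum, reindex the two pieces (via $L=\{b\}\cup K$ and $L=K$) so that they merge into a single alternating sum over all subsets of $T$, and use the extension conventions to restrict the index set to $L\in\Cai$. The only cosmetic difference is that the paper pads the sum to all $K\subseteq R$ at the outset (using $\obj (b\sepi\emptyset )=0$) before substituting, whereas you pad the reindexed sums afterwards with the vanishing $\ym (\{ b\})$ and $\ym (\emptyset )$ terms.
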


\begin{proof}
Having in mind the extension conventions from \S\,\ref{sec.formal-def} write using \eqref{eq.SEF}:
\begin{eqnarray*}
\lefteqn{\sum_{\emptyset\neq K\subseteq R}\ (-1)^{|R\setminus K|}\cdot \obj (b\sepi K) =
(-1)^{|R|}\cdot \sum_{K\subseteq R}\ (-1)^{|K|}\cdot \obj (b\sepi K)}~~~~~~~~~~~~~~~~~~~\\
~&\stackrel{\eqref{eq.SEF}}{=}&
(-1)^{|R|}\cdot \sum_{K\subseteq R}\ (-1)^{|K|}\cdot\left\{ \,\ym (\{ b\}\cup K) - \ym (K)\,\right\}\\
&=& (-1)^{|R|}\cdot (-1)\cdot \sum_{K\subseteq R} (-1)^{|K|+1}\cdot \ym (\{ b\}\cup K)\\
&& ~~~~~~~~+ (-1)^{|R|}\cdot (-1)\cdot\sum_{K\subseteq R} (-1)^{|K|}\cdot \ym (K)\\
&=&  (-1)^{|T|}\cdot \sum_{L\subseteq T} (-1)^{|L|}\cdot \ym (L) =
\sum_{L\in\Cai :\, L\subseteq T} (-1)^{|T\setminus L|}\cdot \ym (L)\,,
\end{eqnarray*}
which concludes the proof.
\end{proof}

Another relevant observation is the following.

\begin{lemma}\rm \label{lem.full}
Any face of\/ $\FVP$ containing the whole Markov equivalence class of full graphs is given by an SE objective.
\end{lemma}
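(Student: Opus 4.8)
The plan is to show that \emph{any} objective $\obj$ defining such a face $F$ is automatically an SE objective, by verifying condition (a) of Lemma~\ref{lem.SEF}, that is, the identity \eqref{eq.SE-dual}, for every admissible triple $(Z,a,b)$. First I would fix a defining pair: since $F$ is a face, there are $\obj\in{\dv R}^{\Fai}$ and $u\in{\dv R}$ with $\FVP\subseteq\{\,v:\langle\obj,v\rangle\leq u\,\}$ and $F=\{\,v\in\FVP:\langle\obj,v\rangle =u\,\}$. By hypothesis every full graph $G$ satisfies $\fam_{G}\in F$, hence $\langle\obj,\fam_{G}\rangle =u$ for all of them.

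The key step is to realise each instance of \eqref{eq.SE-dual} as the pairing of $\obj$ with a difference of two full graphs. Given $Z\subseteq N$ and distinct $a,b\in N\setminus Z$, I would build a full graph $G$ from any consonant order that lists the elements of $Z$ first (in any order), then $a$, then $b$, then the remaining nodes $N\setminus(\{a\}\cup\{b\}\cup Z)$; for this order $\pa_{G}(a)=Z$ and $\pa_{G}(b)=\{a\}\cup Z$, so $a\to b$ is a covered arc. Let $H$ be the full graph for the same order with $a$ and $b$ transposed, so that $\pa_{H}(b)=Z$ and $\pa_{H}(a)=\{b\}\cup Z$ while all other parent sets agree with those in $G$. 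Both $G$ and $H$ are full graphs, hence both lie in $F$.

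I would then compute the difference directly, namely
$\fam_{G}-\fam_{H}=\Id_{a\separ Z}+\Id_{b\separ\{a\}\cup Z}-\Id_{b\separ Z}-\Id_{a\separ\{b\}\cup Z}$
(when $Z=\emptyset$ the terms $\Id_{a\separ\emptyset}$, $\Id_{b\separ\emptyset}$ are the zero vector, consistent with the extension convention $\obj(\cdot\sepi\emptyset)=0$ from \S\,\ref{sec.formal-def}). Since $\langle\obj,\fam_{G}\rangle =u=\langle\obj,\fam_{H}\rangle$, pairing $\obj$ with this difference yields $0=\obj(a\sepi Z)+\obj(b\sepi\{a\}\cup Z)-\obj(b\sepi Z)-\obj(a\sepi\{b\}\cup Z)$, which is exactly \eqref{eq.SE-dual}. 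As $Z,a,b$ were arbitrary, condition (a) of Lemma~\ref{lem.SEF} holds, so $\obj$ is an SE objective and $F$ is given by an SE objective.

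The only point requiring care is the correspondence observation, which is the conceptual heart of the argument: the covered-arc reversals that remain inside the single Markov equivalence class of full graphs already sweep out \emph{every} triple $(Z,a,b)$ occurring in \eqref{eq.SE-dual}, because one can always arrange $Z$ as an initial segment of a consonant order with $a,b$ placed immediately on top of it. Once this is seen, no modification of the objective is needed and the remainder is the routine vector computation above.
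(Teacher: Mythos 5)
Your proof is correct and follows essentially the same route as the paper's: both reduce the claim via Lemma~\ref{lem.SEF}(a) to verifying \eqref{eq.SE-dual}, and realize each instance of that identity as $\langle\obj,\fam_{G}-\fam_{H}\rangle=0$ for two full graphs $G,H\in F$ differing by a covered-arc transposition of $a$ and $b$. The only difference is that you spell out the consonant-order construction of $G$ and $H$ explicitly, which the paper leaves as an assertion of existence.
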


\begin{proof}
Assume $\langle\obj ,\fam\rangle\leq u$ is an arbitrary defining inequality for such a face $F$ of\/ $\FVP$,
with $\obj\in {\dv R}^{\Fai}$, $u\in {\dv R}$. By Lemma \ref{lem.SEF}(a), it is enough to show $\obj$
satisfies \eqref{eq.SE-dual}. Note that, for any $\Zz\subseteq N$ and distinct $a,b\in N\setminus\Zz$,
full graphs $G$ and $H$ over $N$ exist with
$\fam_{G}-\fam_{H}=\Id_{b\separ \{ a\}\cup\Zz} + \Id_{a\separ\Zz}-\Id_{a\separ \{ b\}\cup\Zz} - \Id_{b\separ\Zz}$.
Hence, $\langle\obj ,\fam_{G}\rangle=u=\langle\obj ,\fam_{H}\rangle$ implies that \eqref{eq.SE-dual}
is true for that particular $a,b$ and $\Zz$.
\end{proof}

It follows from Lemma \ref{lem.full} that every face of $\FVP$ which contains the class of full graphs is an SE face.
In particular, no counter-example to Conjecture \ref{conj.SE=WEface}
is among the faces
containing a full graph. Indeed, since they must be closed under Markov equivalence, they necessarily
contain the whole set of full graphs.

\section{Correspondence to supermodular functions}\label{sec.weak-proof}
In this section we characterize those facets of\/ $\FVP$ which contain the set of full graphs.
We show they coincide with SE facets and establish their relation to extreme supermodular functions.
\smallskip

The previous results allow us to confirm Conjecture \ref{conj.SE=WEface} for facets.

\begin{theorem}\label{thm.WE=SEfacet}
The following conditions are equivalent for a facet $\setdags\subseteq\DAG$:
\begin{itemize}
\item[(a)] $\setdags$ is closed under Markov equivalence,
\item[(b)] $\setdags$ contains the whole equivalence class of full graphs,
\item[(c)] $\setdags$ is SE.
\end{itemize}
\end{theorem}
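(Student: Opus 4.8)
The plan is to prove the three conditions equivalent by establishing the cycle $(c)\Rightarrow(a)\Rightarrow(b)\Rightarrow(c)$, since each of these implications is either already available or reduces to a clean structural argument. The implication $(c)\Rightarrow(a)$ is immediate from Lemma~\ref{lem.SE>WE}: any set of graphs on an SE face is closed under Markov equivalence, so an SE facet is in particular closed under Markov equivalence. At the other end, $(b)\Rightarrow(c)$ is exactly the content of Lemma~\ref{lem.full}: a face containing the whole Markov equivalence class of full graphs is given by an SE objective and is therefore an SE face, hence an SE facet when the face happens to be a facet. Thus the only implication carrying genuine weight is $(a)\Rightarrow(b)$, and this is where the argument must do real work.

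For $(a)\Rightarrow(b)$, I would argue that a facet $\setdags$ closed under Markov equivalence must contain \emph{some} full graph, and then invoke closure. First observe that, by the standardization convention, $\setdags$ is defined by an inequality $\langle\obj,\fam\rangle\le u$. A facet cannot contain the empty graph and be closed under Markov equivalence unless it is one of the trivial facets $-\fam(a\sepi B)\le 0$ of Lemma~\ref{lem.increase-facet}(iii); but such a facet is \emph{not} closed under Markov equivalence, since reversing a covered arc moves a graph on this face off it in general. Hence by Lemma~\ref{lem.increase-facet}(i) we have $u>0$, and Corollary~\ref{cor.increase-facet} applies: $\setdags$ is closed under super-graphs. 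The key step is then to show that a facet closed under super-graphs must contain a full graph. Since the full graphs are precisely the maximal elements of $\DAG$ under the subgraph relation, it suffices to find \emph{any} graph in $\setdags$ (which exists because $\setdags$ is nonempty, being a facet) and pass to any full graph extending it; closure under super-graphs then places that full graph in $\setdags$. Once one full graph lies in $\setdags$, closure under Markov equivalence (condition~(a)) forces the \emph{entire} class of full graphs into $\setdags$, since all full graphs are mutually Markov equivalent (as noted after the definition of $\sim$ in \S\,\ref{sec.notation}). This yields $(b)$.

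The step I expect to require the most care is verifying that the empty-graph facets genuinely fail to be closed under Markov equivalence, so that the dichotomy $u=0$ versus $u>0$ can be resolved cleanly in favor of $u>0$. Concretely, for the facet $-\fam(a\sepi B)\le 0$ one must exhibit a graph $G$ with $\pa_G(a)=B$ whose face membership is destroyed under a covered-arc reversal producing a Markov equivalent $H$ with $\pa_H(a)\ne B$; this requires choosing $G$ so that the arc into $a$ (or out of it) is covered, which can always be arranged whenever $B\ne\emptyset$. Alternatively, and perhaps more cleanly, I would avoid the case analysis entirely by noting that the trivial facets are tight at the empty graph, so if $\setdags$ is such a facet then the empty graph lies in $\setdags$; but the empty graph is Markov equivalent only to itself, so closure under Markov equivalence gives no extra graphs, yet a facet must contain $|\Fai|$ affinely independent vertices, and one checks directly that the vertices tight at $-\fam(a\sepi B)\le 0$ cannot all be closed under Markov equivalence. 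I would prefer the first, more conceptual route through Corollary~\ref{cor.increase-facet}, relegating the empty-graph exclusion to a short remark that a Markov-equivalence-closed facet has $u>0$.
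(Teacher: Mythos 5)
Your proof is correct and takes essentially the same route as the paper: the identical cycle of implications, with $(c)\Rightarrow(a)$ from Lemma~\ref{lem.SE>WE}, $(b)\Rightarrow(c)$ from Lemma~\ref{lem.full}, and $(a)\Rightarrow(b)$ by excluding the empty graph via Lemma~\ref{lem.increase-facet}(iii) and then obtaining a full graph in $\setdags$ via Corollary~\ref{cor.increase-facet} and closure under Markov equivalence. Your only addition is spelling out, via covered-arc reversal, why the facets $-\fam(a\sepi B)\le 0$ fail to be closed under Markov equivalence---a detail the paper leaves implicit; just note your wording is inverted there, since the graph with $\pa_{G}(a)=B$ lies \emph{off} that face and the reversal produces the Markov-equivalent graph lying on it.
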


\begin{proof}
To show (a)$\Rightarrow$(b) note, by Lemma \ref{lem.increase-facet}(iii), that
$\setdags$ cannot contain the empty graph, since otherwise it is not closed under Markov equivalence.
Clearly, $\setdags$ must be non-empty, because otherwise it is not a facet of\/ $\FVP$.
Thus, $G\in\setdags$ exists and one can construct a full graph $H\in\DAG$ such that $G$
is a subgraph of $H$. By Corollary \ref{cor.increase-facet}, $H\in\setdags$.
Since $\setdags$ is closed under Markov equivalence, all full graphs belong
to $\setdags$. The implication (b)$\Rightarrow$(c) follows from Lemma~\ref{lem.full}.
The implication (c)$\Rightarrow$(a) was mentioned as Lemma \ref{lem.SE>WE} in \S\,\ref{sec.SE-defin}.
\end{proof}

The next step is to recall the definition of a supermodular set function.

\begin{definition}[standardized supermodular function]\\
Any vector $\ym\in {\dv R}^{\calP (N)}$ can be viewed as
a real set function $\ym : \calP (N)\to {\dv R}$.
Such a set function will be called {\em standardized\/}
if $\ym (S)=0$ for $S\subseteq N$, $|S|\leq 1$, and
{\em supermodular\/} if
\begin{equation}
\forall\, U,V\subseteq N\quad
\ym (U)+\ym (V)\leq \ym (U\cup V)+ \ym (U\cap V)\,.
\label{eq.supermodul}
\end{equation}
The following (non-negative) characteristics are ascribed to any supermodular function $\ym$: for any $a,b\in N$, $a\neq b$ and $\Zz\subseteq N\setminus\{ a,b\}$, we will denote
$$
\Delta\ym (a,b\sepi\Zz ) \,:=\,  \ym (\{ a,b\}\cup\Zz )+ \ym (\Zz ) -\ym (\{ a\}\cup\Zz )-\ym (\{ b\}\cup\Zz )\,.
$$
\end{definition}
\smallskip

It is easy to see that a set function $\ym$ is supermodular iff $\Delta\ym (a,b\sepi\Zz )\geq 0$
for any respective triplet $(a,b\sepi\Zz )$; see, for example, \cite[Theorem 24(iv)]{SK14}.
The point is that standardized supermodular functions correspond to valid inequalities
for the family-variable polytope that are tight at all full graphs.

\begin{lemma}\rm \label{lem.face-supermod}
An inequality $\langle\obj ,\fam\rangle\leq u$, where $\obj\in {\dv R}^{\Fai}$ and $u\in {\dv R}$, is
valid for all $\fam\in\FVP$ and tight at any full graph over $N$
iff it corresponds to a standardized supermodular function $\ym$ in the sense:
\begin{itemize}
\item $\obj$ is given by \eqref{eq.SEF}:
$\obj (a\sepi B) = \ym (\{ a\}\cup B)-\ym (B)$ for $a\in N$, $B\subseteq N\setminus\{ a\}$,
\item $u$ is the shared value $\langle\obj ,\fam_{H}\rangle$ for full graphs $H$ over $N$.
\end{itemize}
Moreover, the correspondence is one-to-one and preserves a conic combination.
\end{lemma}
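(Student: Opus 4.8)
The plan is to establish the stated equivalence by proving both directions separately, then verifying the one-to-one correspondence and its conic-combination-preserving property. The central technical tool will be \emph{Corollary \ref{cor.SEF}}, which already encodes the tightness-at-full-graphs condition in a form amenable to supermodularity checks.

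First I would prove the ``if'' direction: suppose $\ym$ is standardized supermodular and $\obj$ is defined from $\ym$ by \eqref{eq.SEF}. Then $\obj$ is an SE objective by Lemma \ref{lem.SEF}(b), so by Lemma \ref{lem.full} the shared value $u:=\langle\obj,\fam_{H}\rangle$ over full graphs $H$ is well-defined and the face is tight at all full graphs. The key point to verify is \emph{validity}, i.e. $\langle\obj,\fam_{G}\rangle\leq u$ for every $G\in\DAG$. I would do this by comparing an arbitrary $G$ to any consonant full graph $H$ and telescoping: writing $\langle\obj,\fam_{G}\rangle=\sum_{a\in N}\obj(a\sepi\pa_{G}(a))=\sum_{a\in N}\{\ym(\{a\}\cup\pa_{G}(a))-\ym(\pa_{G}(a))\}$, the difference $u-\langle\obj,\fam_{G}\rangle$ should decompose as a non-negative sum of terms $\Delta\ym(a,b\sepi\Zz)\geq 0$. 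The natural mechanism is to pass from $G$ to a consonant full graph one ``arc addition'' at a time along a consonant ordering; each single step of enlarging one parent set $A\subset A\cup\{b\}$ changes the objective by exactly $\obj(a\sepi A\cup\{b\})-\obj(a\sepi A)=\ym(\{a,b\}\cup A)-\ym(\{b\}\cup A)-\ym(\{a\}\cup A)+\ym(A)=\Delta\ym(a,b\sepi A)\geq 0$, where $b\notin\{a\}\cup A$. Summing these non-negative increments along the chain from $G$ up to a full graph gives $u-\langle\obj,\fam_{G}\rangle\geq 0$.

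Next I would prove the ``only if'' direction: suppose $\langle\obj,\fam\rangle\leq u$ is valid and tight at every full graph. By Lemma \ref{lem.full} the objective $\obj$ is SE, so Lemma \ref{lem.SEF}(b) yields some $\ym\in{\dv R}^{\Cai}$ satisfying \eqref{eq.SEF}; extending $\ym$ by the standardization convention ($\ym(S)=0$ for $|S|\leq 1$) makes it a standardized set function, and $u=\langle\obj,\fam_{H}\rangle$ by tightness. It remains to show $\ym$ is supermodular, i.e. $\Delta\ym(a,b\sepi\Zz)\geq 0$ for every triplet. Fix $a,b,\Zz$ and build two graphs $G,G'$ differing only at node $a$'s parent set, with $\pa_{G}(a)=\{b\}\cup\Zz$ and $\pa_{G'}(a)=\{a\}\cup\Zz$ \emph{wait}---more carefully, I would exhibit a $G\in\DAG$ whose objective value equals $u-\Delta\ym(a,b\sepi\Zz)$, forcing $\Delta\ym(a,b\sepi\Zz)\geq 0$ by validity. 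Concretely, take a full graph $H$ and remove the single arc into $a$ coming from $b$ (reducing $\pa(a)$ from $\{a,b\}\cup\Zz$-type configuration down by one element); the resulting acyclic graph $G$ satisfies $u-\langle\obj,\fam_{G}\rangle=\obj(a\sepi\{b\}\cup\Zz')-\obj(a\sepi\Zz')=\Delta\ym(a,b\sepi\Zz')$ for the appropriate $\Zz'$, so validity gives the inequality. The main obstacle here is book-keeping: ensuring such a $G$ exists inside $\DAG$ (acyclicity is automatic since deleting arcs from an acyclic graph preserves acyclicity) and that only the intended single objective term changes.

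Finally, the correspondence $\ym\leftrightarrow(\obj,u)$ is \emph{one-to-one} because \eqref{eq.SEF} is a bijective linear map from standardized functions to SE objectives (the standardization convention pins down the values on $|S|\leq 1$, and Lemma \ref{lem.SEF} already records that $\obj\mapsto\ym$ is one-to-one), with $u$ then determined by $\obj$. That it \emph{preserves conic combinations} is immediate from the linearity of \eqref{eq.SEF} in $\ym$ together with linearity of $u=\langle\obj,\fam_{H}\rangle$ in $\obj$, and from the fact that supermodular functions and standardized functions each form convex cones closed under positive scaling and addition. I expect the only genuinely delicate step to be the explicit single-arc-modification argument in the ``only if'' direction; everything else is either a direct appeal to Lemmas \ref{lem.SEF} and \ref{lem.full} or routine linearity.
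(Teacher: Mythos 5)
Your proposal is correct and follows essentially the same route as the paper's proof: both directions hinge on Lemma \ref{lem.full} plus Lemma \ref{lem.SEF}(b), the necessity of supermodularity is extracted by deleting a single arc from a suitably ordered full graph (your removal of $b\to a$ is just the symmetric twin of the paper's removal of $a\to b$), and sufficiency comes from telescoping $G$ up to a consonant full graph, with one-to-oneness and conic preservation following from the linearity of \eqref{eq.SEF}. The only cosmetic differences are that your telescoping proceeds arc-by-arc via $\Delta\ym(a,b\sepi A)\geq 0$ where the paper applies supermodularity once per node, and that Corollary \ref{cor.SEF}, announced as your central tool, is never actually needed.
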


\begin{proof}
Given such an inequality, Lemma \ref{lem.full} implies that $\obj$ is an SE objective
and Lemma~\ref{lem.SEF}(b) says it has the form \eqref{eq.SEF}. To show that $\ym$ is necessarily supermodular
observe $\Delta\ym (a,b\sepi\Zz )\geq 0$ for any $(a,b\sepi\Zz )$.
To this end, note that, given a triplet $(a,b\sepi\Zz )$, a full graph $H$ over $N$ and
$G\in\DAG$ exist such that $\fam_{H}-\fam_{G}= \Id_{b\separ \{a\}\cup\Zz} -\Id_{b\separ\Zz}$.
Indeed, consider a total order of elements in $N$ in which $\Zz$ precedes $a$ after which $b$ and $N\setminus (\{a,b\}\cup\Zz )$ follow and take $H$ as the full graph consonant with this order and $G$ is the graph obtained
from $H$ by the removal of the arrow $a\to b$. Hence,
$\langle\obj,\fam_{G}\rangle\leq u=\langle\obj,\fam_{H}\rangle$ implies
$0\leq \langle\obj,\fam_{H}-\fam_{G}\rangle = \obj (b\sepi \{a\}\cup\Zz ) -\obj (b\sepi\Zz )\stackrel{\eqref{eq.SEF}}{=}\Delta\ym (a,b\sepi\Zz )$.

Conversely, given a supermodular $\ym$, Lemma~\ref{lem.SEF}(b) says the objective $\obj$ given by \eqref{eq.SEF}
is SE and the full graphs $H$ over $N$ share the value $\langle\obj ,\fam_{H}\rangle$. Thus, it is enough to show
that, for any $G\in\DAG$, a full graph $H$ exists with $\langle\obj,\fam_{G}\rangle\leq\langle\obj,\fam_{H}\rangle$.
Indeed, consider a total order consonant with $G$, denote by $\pr (a)$ the set of (strict) predecessors of
$a\in N$ in that order and by $H$ the full graph consonant with the order. Then write
\begin{eqnarray*}
\lefteqn{\langle\obj,\fam_{H}-\fam_{G}\rangle = \sum_{a\in N}
\left\{\, \obj (a\sepi \pr (a)) -\obj (a\sepi \pa_{G}(a)) \,\right\} =}\\
&=& \sum_{a\in N}
\{\, \underbrace{\ym (\{ a\}\cup\pr (a)) -\ym (\{ a\}\cup\pa_{G} (a)) -\ym (\pr (a))
+\ym (\pa_{G}(a))}_{\geq 0} \,\}\geq 0\,.
\end{eqnarray*}
Since \eqref{eq.SEF} defines an invertible linear transformation,
the last claim is easy.
\end{proof}

By the extension convention from \S\,\ref{sec.formal-def}, any vector in $\ym\in {\dv R}^{\Cai}$
could be identified with a standardized set function. With a small abuse of terminology,
we say that $\ym\in {\dv R}^{\Cai}$ is supermodular if its zero extension
$\ym :\calP (N)\to {\dv R}$ is a supermodular set function.
By its definition, the set of supermodular vectors in ${\dv R}^{\Cai}$ is
a polyhedral cone. Since it is pointed, it has finitely many extreme rays.
This motivates the next definition.

\begin{definition}[extreme supermodular function]\\
A standardized supermodular set function $\ym :\calP (N)\to {\dv R}$
is called {\em extreme\/} if it generates an extreme ray of the standardized supermodular cone.
\end{definition}

The following fact follows from a specific characterization
of extremality of supermodular functions.

\begin{lemma}\rm \label{lem.ext-sup-imcompar}
Let $\ym_{1},\ym_{2}\in {\dv R}^{\calP (N)}$ generate distinct extreme rays of the standardized supermodular cone.
Then the faces of\/ $\FVP$ determined by the corresponding inequalities, as described in Lemma \ref{lem.face-supermod}, are inclusion-incomparable.
\end{lemma}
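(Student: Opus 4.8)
The plan is to connect the geometric statement about inclusion-incomparable faces of $\FVP$ back to the combinatorial structure of the supermodular cone, and then invoke a known characterization of extremality. First I would recall what it means for the two faces to be inclusion-comparable: by Lemma~\ref{lem.face-supermod}, each $\ym_{i}$ gives rise to an objective $\obj_{i}$ (via \eqref{eq.SEF}) and an upper bound $u_{i}$, determining a face $F_{i}$ of $\FVP$. Using the correspondence of Definition~\ref{def.face-DAG}, each $F_{i}$ corresponds to a set $\setdags_{i}\subseteq\DAG$, and the inclusion of faces translates to inclusion of these sets of graphs. So the claim ``$F_{1}$ and $F_{2}$ are inclusion-incomparable'' is equivalent to showing that neither $\setdags_{1}\subseteq\setdags_{2}$ nor $\setdags_{2}\subseteq\setdags_{1}$ holds.

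Next I would try to show the contrapositive: if the faces were inclusion-comparable, say $F_{1}\subseteq F_{2}$, then $\ym_{1}$ and $\ym_{2}$ could not generate \emph{distinct} extreme rays. The key mechanism is that a graph $G$ lies in the face $\setdags_{i}$ precisely when $\langle\obj_{i},\fam_{G}\rangle=u_{i}$, i.e.\ when the validity inequality is \emph{tight} at $\fam_{G}$. From the computation in the proof of Lemma~\ref{lem.face-supermod}, the ``slack'' $u_{i}-\langle\obj_{i},\fam_{G}\rangle$ is a sum over $a\in N$ of nonnegative supermodularity-type terms $\Delta$-differences; tightness means every one of these terms vanishes. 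Thus membership of $G$ in $\setdags_{i}$ is governed by which of the supermodular inequalities $\ym_{i}$ makes tight along the total order consonant with $G$. The plan is to phrase ``$\setdags_{1}\subseteq\setdags_{2}$'' as: every graph making $\ym_{1}$ tight (in this summed sense) also makes $\ym_{2}$ tight, i.e.\ the set of supermodularity constraints made tight by $\ym_{2}$ is contained in those made tight by $\ym_{1}$.

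From here I would invoke the phrase ``a specific characterization of extremality of supermodular functions'' that the authors flag just before the lemma. The standard such characterization (of the kind appearing in \cite{SK14}) says that a supermodular $\ym$ is extreme iff the collection of $\Delta$-inequalities it makes tight has maximal rank, or equivalently determines the ray up to a positive multiple. The idea is: if $\setdags_{1}\subseteq\setdags_{2}$, then the tight constraints of $\ym_{2}$ form a subset of those of $\ym_{1}$, so the face of the supermodular cone on which $\ym_{1}$ lies is contained in the face on which $\ym_{2}$ lies; since both are extreme rays (one-dimensional faces of a pointed cone), containment of one-dimensional cone-faces forces them to coincide, contradicting the assumption that $\ym_{1}$ and $\ym_{2}$ generate \emph{distinct} rays. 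The same argument with the roles reversed rules out $\setdags_{2}\subseteq\setdags_{1}$, giving incomparability.

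The main obstacle I expect is the precise bookkeeping that turns ``$\setdags_{1}\subseteq\setdags_{2}$'' (a statement about whole graphs) into ``the tight $\Delta$-constraints of $\ym_{2}$ are contained in those of $\ym_{1}$'' (a statement about elementary supermodularity inequalities). The forward direction is clear, but I must be careful that a single graph $G$ only witnesses tightness of a \emph{sum} of $\Delta$-terms, not of each term individually. The workaround is to choose graphs cleverly: for a fixed triplet $(a,b\sepi\Zz)$, one can pick a pair of graphs differing by exactly one covered-arc-type modification (as done in the proof of Lemma~\ref{lem.face-supermod}, where $\fam_{H}-\fam_{G}=\Id_{b\separ\{a\}\cup\Zz}-\Id_{b\separ\Zz}$) so that tightness isolates the single term $\Delta\ym(a,b\sepi\Zz)=0$. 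Establishing this clean dictionary between the face $\setdags_{i}$ and the exact set of vanishing $\Delta$-characteristics is the technical heart; once it is in place, the extremality characterization and the elementary fact about one-dimensional faces of a pointed cone close the argument quickly.
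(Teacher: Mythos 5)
Your proposal is correct and takes essentially the same route as the paper's own proof, only organized contrapositively: both arguments hinge on exactly the two ingredients you name --- the pair of a full graph $H$ and the graph $G$ obtained by deleting one arrow, taken from the proof of Lemma~\ref{lem.face-supermod}, so that $\fam_{G}$ lies on the face determined by $\ym_{i}$ precisely when $\Delta\ym_{i}(a,b\sepi\Zz )=0$, together with the extremality characterization cited from \cite{SK14} --- with the paper simply running this forward, extracting from distinctness of the rays a triplet with $\Delta\ym_{1}(a,b\sepi\Zz )>0$ and $\Delta\ym_{2}(a,b\sepi\Zz )=0$ and exhibiting $\fam_{G}$ as a point of one face outside the other. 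The only blemish is a direction slip in your bookkeeping: $\setdags_{1}\subseteq\setdags_{2}$ yields that the tight $\Delta$-constraints of $\ym_{1}$ are contained in those of $\ym_{2}$, not the reverse as you wrote; this is harmless here, since a containment between the two one-dimensional minimal faces of the pointed supermodular cone in either direction forces the two rays to coincide, so your contradiction still goes through.
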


\begin{proof}
The argument is based on the result saying that a supermodular set function $\ym$ is extreme
iff the structural independence model produced by $\ym$ is sub-maximal; see \cite[Lemma 5.6]{Stu05}
or \cite[Corollary 30]{SK14}. More specifically, it says $\ym$ is extreme iff
any supermodular function $\ym^{\prime}$ with
$$
\forall\, (a,b\sepi\Zz )\quad \Delta\ym (a,b\sepi\Zz )=0 ~~\Rightarrow~~ \Delta\ym^{\prime} (a,b\sepi\Zz )=0
$$
either satisfies, for any triplet $(a,b\sepi\Zz )$, $\Delta\ym^{\prime} (a,b\sepi\Zz )=0 ~\Leftrightarrow~ \Delta\ym (a,b\sepi\Zz )=0$
or even $\Delta\ym^{\prime} (a,b\sepi\Zz )=0$, which, for a standardized $\ym^{\prime}$, means that $\ym^{\prime}$
must be a non-negative multiple of $\ym$. Since $m_{1}, m_{2}$ generate distinct rays, a triplet
$(a,b\sepi\Zz )$ must exist such that $\Delta\ym_{1}(a,b\sepi\Zz )>0$ and $\Delta\ym_{2}(a,b\sepi\Zz )=0$.
As in the proof of Lemma~\ref{lem.face-supermod}, construct a full graph $H$ over $N$ and
$G\in\DAG$ with $\fam_{H}-\fam_{G}= \Id_{b\separ \{a\}\cup\Zz} -\Id_{b\separ\Zz}$.
Then $\Delta\ym_{1}(a,b\sepi\Zz )>0$ implies that the inequality $\langle\obj_{1} ,\fam\rangle\leq u_{1}$
given by $\ym_{1}$ through \eqref{eq.SEF} is not tight for $\fam_{G}$ because
$$
u_{1}-\langle\obj_{1} ,\fam_{G}\rangle =\langle\obj_{1},\fam_{H}-\fam_{G}\rangle =
\obj_{1} (b\sepi \{a\}\cup\Zz ) -\obj_{1} (b\sepi\Zz )\stackrel{\eqref{eq.SEF}}{=}\Delta\ym_{1} (a,b\sepi\Zz )>0\,,
$$
while $\Delta\ym_{2}(a,b|\sepi\Zz )=0$ implies that $\langle\obj_{2} ,\fam\rangle\leq u_{2}$
is tight for $\fam_{G}$. Hence, the face of\/ $\FVP$ determined by $\ym_{2}$ is not contained is
the one determined by $\ym_{1}$. The role of generators $\ym_{1}$ and $\ym_{2}$ is clearly exchangeable.
\end{proof}

Now, thanks to Theorem \ref{thm.WE=SEfacet}(b), we are ready to characterize SE facets.

\begin{theorem}\label{thm.SEfacet=extsupermo}
An inequality $\langle\obj ,\fam\rangle\leq u$ for $\fam\in\FVP$,
where $\obj\in {\dv R}^{\Fai}$ and $u\in {\dv R}$, is
facet-defining for\/ $\FVP$ and tight at all full graphs over $N$ iff there exists
an extreme standardized supermodular set function $\ym$ such that
$\obj$ is determined by \eqref{eq.SEF} and $u$ is the shared value
of\/ $\langle\obj ,\fam_{H}\rangle$ for full graphs $H$ over $N$.
\end{theorem}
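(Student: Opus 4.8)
The plan is to establish the claimed equivalence by combining Lemma~\ref{lem.face-supermod}, which already gives a one-to-one correspondence between standardized supermodular functions and valid inequalities tight at all full graphs, with Lemma~\ref{lem.ext-sup-imcompar}, which translates extremality of the supermodular function into inclusion-incomparability of the associated faces. Throughout I would fix the convention that $\obj$ is determined by a standardized supermodular $\ym$ through \eqref{eq.SEF} and that $u$ is the shared value $\langle\obj,\fam_H\rangle$ over full graphs $H$; Lemma~\ref{lem.face-supermod} guarantees that every inequality valid for $\FVP$ and tight at all full graphs arises this way from a unique $\ym$, so the only thing left to characterize is when such an inequality is \emph{facet}-defining, and the claim is that this happens precisely when $\ym$ is extreme.

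For the direction that extremality implies facet-defining, I would argue by contraposition using the maximality characterization of facets in Lemma~\ref{lem.facet=maxim}. Suppose the face $F$ determined by $\ym$ is not a facet; since it is a proper face (it misses, say, the empty graph when $\ym\neq 0$, or more carefully since it is tight at full graphs but not all of $\FVP$), by Lemma~\ref{lem.facet=maxim} there is a face $F'$ with $F\subsetneq F'\subsetneq\FVP$. The key step is to show $F'$ is itself a face tight at all full graphs, hence corresponds via Lemma~\ref{lem.face-supermod} to some standardized supermodular $\ym'$ generating a ray \emph{different} from that of $\ym$, yet with $F\subsetneq F'$; this would contradict the inclusion-incomparability guaranteed by Lemma~\ref{lem.ext-sup-imcompar} for distinct extreme rays. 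To make this precise I expect to use that $F'$, being larger than $F$ and contained in $\FVP$, still contains all full graphs (since $F$ does and inclusion of faces preserves vertex membership), so Lemma~\ref{lem.face-supermod} applies to $F'$.

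For the converse, that facet-defining forces $\ym$ to be extreme, I would again use inclusion-incomparability. If $\ym$ were not extreme, then within the supermodular cone it would decompose, or there would be a supermodular $\ym'$ whose zero-set of $\Delta$-values strictly contains that of $\ym$, producing via Lemma~\ref{lem.face-supermod} a face strictly containing $F$ yet still a proper face of $\FVP$ (tight at full graphs but not everywhere); by Lemma~\ref{lem.facet=maxim} this prevents $F$ from being a facet. Here the extremality criterion from the proof of Lemma~\ref{lem.ext-sup-imcompar}---that $\ym$ is extreme iff its induced structural independence model is sub-maximal---is exactly the tool that manufactures such an enlarging $\ym'$, so I would lean on that characterization rather than reprove it.

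The main obstacle I anticipate is the bookkeeping around \emph{proper} faces: one must verify that the enlarging faces produced are genuinely strictly between $F$ and $\FVP$, i.e.\ neither equal to $F$ nor all of $\FVP$. The distinctness of rays together with Lemma~\ref{lem.ext-sup-imcompar} handles the $F'\neq F$ side cleanly, but ruling out $F'=\FVP$ requires knowing the enlarging supermodular function still yields a nontrivial inequality (not the trivial $0\leq 0$), which should follow because a face tight at all full graphs cannot be all of $\FVP$ unless the defining inequality is degenerate. I would want to confirm that the correspondence in Lemma~\ref{lem.face-supermod} sends the zero supermodular function to the trivial face and everything else to proper faces, so that extremality, inclusion-maximality among proper supermodular faces, and the facet property all align.
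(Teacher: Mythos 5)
Your architecture is the same as the paper's: Lemma~\ref{lem.face-supermod} reduces the statement to deciding when the face of a standardized supermodular function is a facet, Lemma~\ref{lem.facet=maxim} recasts facets as inclusion-maximal proper faces, and Lemma~\ref{lem.ext-sup-imcompar} is the incomparability tool. However, there is a genuine gap exactly where you invoke Lemma~\ref{lem.ext-sup-imcompar}. In the direction ``extreme $\Rightarrow$ facet'' you take a face $F'$ with $F\subsetneq F'\subsetneq\FVP$, pass to its supermodular function $\ym'$ via Lemma~\ref{lem.face-supermod}, note that $\ym'$ generates a ray different from that of $\ym$, and declare a contradiction with ``inclusion-incomparability for distinct extreme rays''. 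But Lemma~\ref{lem.ext-sup-imcompar} requires \emph{both} functions to generate extreme rays, and nothing makes $\ym'$ extreme. The incomparability statement is in fact false once extremality of one side is dropped: for an extreme $\ym''$ on a ray distinct from $\ym$, the supermodular function $\ym+\ym''$ lies on a ray distinct from $\ym$, yet its face is the intersection of $F$ with the face of $\ym''$, hence comparable with (contained in) $F$. So the lemma cannot be applied to the pair $(\ym,\ym')$. What is missing is precisely the crux of the paper's proof: write $\ym'=\sum_j\alpha_j\ym_j$ as a conic combination of extreme-ray generators, observe via the conic-combination argument of Corollary~\ref{cor.facet=maxim} (and the last claim of Lemma~\ref{lem.face-supermod}) that $F'\subseteq F_k$ for every $k$ with $\alpha_k>0$, and only then compare: $F\subsetneq F'\subseteq F_k$ either contradicts Lemma~\ref{lem.ext-sup-imcompar} (if $\ym_k$ lies on a ray distinct from that of $\ym$) or contradicts $F_k=F$ (if $\ym_k\propto\ym$).

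The converse direction has a parallel gap. Non-extremality of $\ym$ yields, by the cited criterion, a standardized supermodular $\ym'$ whose $\Delta$-zero-set \emph{contains} that of $\ym$ and which is not a non-negative multiple of $\ym$; it does not yield \emph{strict} containment, which is what your appeal to Lemma~\ref{lem.facet=maxim} requires. Indeed, take extreme $\ym_1,\ym_2$ on distinct rays and put $\ym=\ym_1+2\cdot\ym_2$, $\ym'=\ym_1+\ym_2$: these are non-proportional but have identical zero-sets (both equal to $\{\Delta\ym_1=0\}\cap\{\Delta\ym_2=0\}$), so the associated faces coincide, $F'=F$, and no face strictly between $F$ and $\FVP$ is produced. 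A strictly enlarging witness does always exist, but to obtain it one again has to decompose $\ym$ into extreme generators and take $\ym'$ to be one of them (its zero-set strictly contains that of $\ym$ precisely because distinct extreme rays are incomparable) --- which is essentially the paper's argument, where Lemma~\ref{lem.ext-sup-imcompar} is applied to two \emph{generators} with positive coefficients to construct the witnesses $w_1,w_2$ needed for Corollary~\ref{cor.facet=maxim}. You would also need to justify the glossed step that a strictly larger zero-set gives a strictly larger face; this follows from the graph constructions in the proof of Lemma~\ref{lem.face-supermod} (a full graph minus one arrow witnesses each triplet), but it is not automatic. Both gaps are thus repaired by one and the same missing idea --- the conic decomposition into extreme rays combined with the fact that the face of a conic combination lies in the face of each summand --- and this is exactly the content of the paper's proof.
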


\begin{proof}
First, using Lemma~\ref{lem.facet=maxim}, we show that any extreme standardized supermodular function $\ym_{i}$ gives a facet of\/ $\FVP$.
Thus, assume $F^{\prime}$ is a face containing the face $F_{i}$ determined by $\ym_{i}$.
Lemma \ref{lem.face-supermod} applied to $\ym_{i}$ says that the face $F_{i}$ contains the class
of full graphs, and so $F^{\prime}$ does. Again by Lemma \ref{lem.face-supermod} applied to
the inequality defining $F^{\prime}$, the face $F^{\prime}$ is given by a supermodular
function $\ym^{\prime}$, which must be a conic combination of finitely many generators of
(all) the extreme rays: $\ym^{\prime}=\sum_{j} \alpha_{j}\cdot \ym_{j}$, $\alpha_{j}\geq 0$.
The assumption $F_{i}\subseteq F^{\prime}$ implies that, for any $k\neq i$, the coefficient $\alpha_{k}$ must vanish.
Indeed, by the last claim in Lemma \ref{lem.face-supermod}, $\alpha_{k}>0$ forces $F^{\prime}\subseteq F_{k}$:
the inequality defining $F^{\prime}$ is a conic combination of the inequality corresponding to
$\ym_{k}$ (= defining $F_{k}$) and the inequality corresponding to
$\sum_{j\neq k} \alpha_{j}\cdot \ym_{j}$ and using the arguments in the proof
of Corollary \ref{cor.facet=maxim} observe $F^{\prime}\subseteq F_{k}$. However, for distinct $i$ and $k$,
the respective faces $F_{i}$ and $F_{k}$ are inclusion-incomparable, by Lemma~\ref{lem.ext-sup-imcompar}.
Thus, either $\ym^{\prime}=0$, in which case $F^{\prime}=\FVP$, or $\ym^{\prime}$ a positive multiple of $\ym_{i}$,
in which case $F^{\prime}=F_{i}$.

Second, we show that any facet $F$ of\/ $\FVP$ involving all full graphs
is given an extreme standardized supermodular function.
Apply Lemma \ref{lem.face-supermod} to $F$ and write the respective standardized supermodular function
$\ym$ as a conic combination $\ym =\sum_{j} \alpha_{j}\cdot\ym_{j}$, $\alpha_{j}\geq 0$ of extreme ones.
Let us assume for a contradiction that $\alpha_{i}\neq 0\neq\alpha_{k}$ for distinct $i$ and $k$.
By Lemma \ref{lem.ext-sup-imcompar}, the faces corresponding to $\ym_{i}$ and $\ym_{k}$ are incomparable.
In particular, provided $\langle\obj_{j},\fam\rangle\leq u_{j}$ denotes the inequality for $\fam\in\FVP$
corresponding $\ym_{j}$, we know that $w_{1}\in\FVP$ exists satisfying
$\langle\obj_{i},w_{1}\rangle =u_{i}$ and $\langle\obj_{k},w_{1}\rangle <u_{k}$,
and $w_{2}\in\FVP$ exists satisfying $\langle\obj_{i},w_{2}\rangle <u_{i}$.
The inequality corresponding to $\ym$ is the sum of
$\sum_{j\neq i} \alpha_{j}\cdot\langle\obj_{j},\fam\rangle\leq \sum_{j\neq i} \alpha_{j}\cdot u_{j}$
and of the $\alpha_{i}$-multiple of the inequality $\langle\obj_{i},\fam\rangle\leq u_{i}$.
The assumption \eqref{eq.non-facet} of Corollary~\ref{cor.facet=maxim} is fulfilled
for the vectors $w_{1}$ and $w_{2}$ above, which gives a contradictory conclusion that $F$ is not a facet.
Thus, at most one of the coefficients $\alpha_{j}$ is non-zero. Since $\ym$ must be non-zero,
it is a positive multiple of some $\ym_{j}$.
\end{proof}

Thus, Theorem~\ref{thm.SEfacet=extsupermo} transforms the problem of testing
certain facets of\/ $\FVP$ into the task to verify whether
the respective supermodular function is extreme. Note that
a simple linear criterion for testing extremality of a standardized supermodular
function $\ym$ has recently been proposed in \cite{SK14}. The criterion consists
in solving a linear equation system determined by the combinatorial structure
of the so-called {\em core polytope\/} ascribed to $\ym$:
$$
{\cal C}(\ym ) := \{\, [v_{a}]_{a\in N}\in {\dv R}^{N}\, : ~ \sum_{a\in N} v_{a}=\ym (N) \,~\&~\,
\forall\, S\subseteq N ~\sum_{a\in S} v_{a}\geq\ym (S)\,\}\,.
$$
We hope that the criterion from \cite{SK14} will appear to be useful in our context.

\section{Generalized cluster inequalities and uniform matroids}\label{sec.cluster-ineq}

An important class of inequalities for the family-variable
polytope is discussed in this section. We apply
Theorem~\ref{thm.SEfacet=extsupermo} from the previous section 
to show they define SE facets and reveal their hidden connection to uniform matroids.
\smallskip

Jaakkola, Sontag, Globerson and Meila introduced in \cite{JSGM10} an interesting class 
of cluster-based inequalities for $\FVP$, whose purpose was to express the acyclicity restrictions.
To shorten the terminology we call them the {\em cluster inequalities}.
Specifically, if the family-vector $\fam_{G}$ encoding $G\in\DAG$ is extended by additional components
for the empty parent sets $\fam_{G}(a\sepi\emptyset)$, $a\in N$, then
the inequality ascribed to a cluster $C\subseteq N$, $|C|\geq 2$, has the form
$$
1\leq\, \sum_{a\in C}\ \sum_{B\subseteq N\,:\, B\cap C=\emptyset}\ \fam_{G} (a\sepi B)\,.
$$
The interpretation is clear: since the induced subgraph $G_{C}$ is acyclic,
there is at least one node $a$ in $C$ which has no parent in $C$.
An important fact is that the only integral vectors in the polyhedron specified by the cluster inequalities,
and, for any $a\in N$, by the {\em convexity constraints\/} $\fam_{G}(a\sepi B)\geq 0$, $B\subseteq N\setminus\{ a\}$
and $\sum_{B\subseteq N\setminus\{ a\}} \fam_{G} (a\sepi B)=1$,
are the DAG-codes \cite[Lemma\,2]{SH13}.
\smallskip

The cluster inequalities have appeared to have a crucial role in the integer linear programming (ILP) approach
learning BN structure. This was confirmed computationally in \cite{Cus11} 
by the first author of this paper, who also introduced {\em generalized cluster inequalities}.
Specifically, to every cluster $C\subseteq N$, $|C|\geq 2$, and
$k=1,\ldots ,|C|-1$ one can ascribe the inequality
$$
k\leq\, \sum_{a\in C}\ \sum_{B\subseteq N\setminus\{ a\}\,:\, |B\cap C|<k}\ \fam_{G} (a\sepi B)\,.
$$
Its interpretation is analogous: since the induced subgraph $G_{C}$ is acyclic,
the first $k$ nodes in a total order of nodes in $C$ consonant with $G_{C}$ have at most $k-1$
parents in $C$. Note that for $k=|C|$ and $k=0$ the inequalities are tight at any $G\in\DAG$ and are, therefore,
omitted. In particular, we only consider the (generalized) cluster inequalities for $k=1,\ldots ,|C|-1$;
this also enforces $|C|\geq 2$.
To transform them into standardized inequality constraints on a vector $\fam$ in $\FVP\subseteq {\dv R}^{\Fai}$
we use the above convexity equality constraints and get for any $C\subseteq N$, $|C|\geq 2$, and $k=1,\ldots ,|C|-1$,
\begin{equation}
\sum_{a\in C}\ \sum_{B\subseteq N\setminus\{ a\}\,:\, |B\cap C|\geq k}\ \fam (a\sepi B)\,\leq\, |C|-k\,.
\label{eq.gen-cluster}
\end{equation}
The point is that this $k$-cluster inequality \eqref{eq.gen-cluster} corresponds to
an extreme standardized supermodular set function in sense of Theorem \ref{thm.SEfacet=extsupermo}.

\begin{lemma}\label{lem.cluster-supermod}\rm
For any $C\subseteq N$, $|C|\geq 2$, and $k=1,\ldots ,|C|-1$, the formula
\begin{equation}
\ym_{C,k}(S) = \max\, \{\, 0, |S\cap C|-k\,\} \quad \mbox{for any}~~ S\subseteq N,
\label{eq.cluster-supermod}
\end{equation}
gives an extreme standardized supermodular function which determines through the formula \eqref{eq.SEF} the objective
coefficients in \eqref{eq.gen-cluster}.
\end{lemma}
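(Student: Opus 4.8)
The plan is to verify the three claims of the lemma in order: that $\ym_{C,k}$ is standardized, that it is supermodular, that it determines through \eqref{eq.SEF} exactly the objective coefficients appearing in \eqref{eq.gen-cluster}, and finally that it is extreme. The first claim is immediate: for $|S|\leq 1$ one has $|S\cap C|\leq 1\leq k$, so $\ym_{C,k}(S)=\max\{0,|S\cap C|-k\}=0$. For supermodularity, rather than checking \eqref{eq.supermodul} directly for all pairs $U,V$, I would use the criterion recalled right after the definition, namely that $\ym$ is supermodular iff $\Delta\ym (a,b\sepi\Zz )\geq 0$ for every triplet $(a,b\sepi\Zz )$. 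Writing $g(t):=\max\{0,t-k\}$, the function $\ym_{C,k}(S)=g(|S\cap C|)$ depends on $S$ only through $t=|S\cap C|$, and $g$ is a convex (in fact piecewise-linear, non-decreasing) function of the integer $t$. A short computation shows $\Delta\ym (a,b\sepi\Zz )$ equals the second difference $g(t+2)-2g(t+1)+g(t)\geq 0$ when both $a,b\in C$ (with $t=|\Zz\cap C|$), and equals $0$ whenever at least one of $a,b$ lies outside $C$; convexity of $g$ gives the non-negativity. This is the conceptual heart of the supermodularity step and avoids case-splitting on $U,V$.

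Next I would confirm the correspondence with \eqref{eq.gen-cluster}. Applying \eqref{eq.SEF} gives
$$
\obj (a\sepi B)=\ym_{C,k}(\{a\}\cup B)-\ym_{C,k}(B)=g(|(\{a\}\cup B)\cap C|)-g(|B\cap C|).
$$
When $a\notin C$ this difference is $0$; when $a\in C$ it equals $g(|B\cap C|+1)-g(|B\cap C|)$, which is $1$ exactly when $|B\cap C|\geq k$ and $0$ otherwise. Thus $\obj(a\sepi B)=1$ precisely on the index pairs $(a\sepi B)$ with $a\in C$ and $|B\cap C|\geq k$, and $0$ elsewhere, matching the left-hand side of \eqref{eq.gen-cluster}. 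By Theorem~\ref{thm.SEfacet=extsupermo} the shared value $u=\langle\obj,\fam_H\rangle$ at a full graph $H$ is forced; I would compute it by taking $H$ consonant with an order listing the nodes of $C$ first, giving $u=\ym_{C,k}(N)=|C|-k$, which is the right-hand side of \eqref{eq.gen-cluster}.

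The main obstacle is extremality. By Lemma~\ref{lem.ext-sup-imcompar} and the characterization used in its proof (from \cite[Lemma 5.6]{Stu05} or \cite[Corollary 30]{SK14}), I would show that any standardized supermodular $\ym^{\prime}$ whose set of tight triplets contains that of $\ym_{C,k}$ must be a non-negative multiple of $\ym_{C,k}$. The tight triplets of $\ym_{C,k}$ are exactly those $(a,b\sepi\Zz)$ where the second difference vanishes, i.e.\ all triplets with $\{a,b\}\not\subseteq C$ together with those having $a,b\in C$ but $|\Zz\cap C|<k-1$ or $|\Zz\cap C|\geq k$; only triplets with $a,b\in C$ and $|\Zz\cap C|=k-1$ are strict. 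The argument is then a propagation: the vanishing of $\Delta\ym^{\prime}$ on the tight triplets, combined with supermodularity of $\ym^{\prime}$, forces $\ym^{\prime}(S)$ to depend only on $|S\cap C|$ and to be affine in $|S\cap C|$ on each of the two ranges $|S\cap C|\leq k$ and $|S\cap C|\geq k$, with the outside-$C$ elements contributing nothing; pinning down the two affine pieces via the standardization $\ym^{\prime}(S)=0$ for $|S|\leq 1$ and matching the single active break at $|S\cap C|=k$ leaves exactly one free scalar, the slope, so $\ym^{\prime}=\lambda\,\ym_{C,k}$ with $\lambda\geq 0$. I expect the bookkeeping that translates ``all these triplets are tight'' into ``$\ym^{\prime}$ is a function of $|S\cap C|$ alone'' to be the delicate part, and would carry it out by a short induction on $|S|$.
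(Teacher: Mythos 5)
Your proposal is correct and follows essentially the same route as the paper: the same computation of the objective coefficients and of the bound $|C|-k$, the same identification of the tight triplets ($\Delta\ym_{C,k}(a,b\sepi\Zz)=1$ exactly when $a,b\in C$ and $|\Zz\cap C|=k-1$, and $=0$ otherwise), and the same induction-by-propagation showing that any standardized supermodular function vanishing on those tight triplets must be a non-negative multiple of $\ym_{C,k}$. The only cosmetic difference is that you reduce extremality to this propagation by invoking the tight-triplet characterization cited in Lemma \ref{lem.ext-sup-imcompar}, whereas the paper performs that reduction inline, decomposing $\ym_{C,k}=\alpha\cdot\ym_{1}+(1-\alpha)\cdot\ym_{2}$ and using non-negativity of $\Delta\ym_{i}$ and of $\ym_{i}$ (the latter gives the vanishing on $\{S:\ |S\cap C|\leq k\}$ immediately, where your version needs the short induction you describe).
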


\begin{proof}
Easily, the objective coefficient for $(a\sepi B)\in\Fai$ is
$$
\obj_{C,k}(a\sepi B) \stackrel{\eqref{eq.SEF}}{=}
\ym_{C,k}(\{ a\}\cup B)-\ym_{C,k}(B) =
\left\{
\begin{array}{ll}
1 & ~~~~\mbox{if $a\in C$ and $|B\cap C|\geq k$,}\\
0 & ~~~~\mbox{otherwise},
\end{array}
\right.
$$
and the value of $\langle\obj_{C,k},\fam_{H}\rangle$ for any full graph $H$ over $N$ is
$|C|-k$. Hence, \eqref{eq.cluster-supermod} determines through \eqref{eq.SEF} the inequality
\eqref{eq.gen-cluster}.

It remains to show that $\ym_{C,k}$ generates an extreme ray of the cone $\KON$ of standardized supermodular functions. Recall $\ym$ is supermodular iff,
for any triplet $A,B,\Zz\subseteq N$ of pairwise disjoint sets, one has
$$
\Delta\ym (A,B\sepi\Zz ) \,:=\,  \ym (A\cup B\cup\Zz )+ \ym (\Zz ) -\ym (A\cup\Zz )-\ym (B\cup\Zz )\geq 0\,,
$$
which is a re-formulation of \eqref{eq.supermodul}, but it enough to verify
$\Delta\ym (a,b\sepi\Zz )\geq 0$ for any $a,b\in N$, $a\neq b$ and $\Zz\subseteq N\setminus\{ a,b\}$.
It is easy to observe $\ym (S)\geq 0$ for any $\ym\in\KON$ and $S\subseteq N$.
Since $\ym_{C,k} (S)=\ym_{C,k} (S\cap C)$ for any $S\subseteq N$, one has
$$
\Delta\ym_{C,k} (A,B\sepi\Zz ) =
\Delta\ym_{C,k} (A\cap C,B\cap C\sepi\Zz\cap C )\quad
\mbox{for disjoint $A,B,\Zz\subseteq N$}.
$$
To show $\ym_{C,k}\in\KON$ observe that, for any triplet $(a,b\sepi\Zz )$ with $\{a,b\}\cup\Zz\subseteq C$,
$$
\begin{array}{lcl}
\Delta\ym_{C,k} (a,b\sepi\Zz )=1 &~~& \mbox{if $|\{a,b\}\cup\Zz|=k+1$, and}\\
\Delta\ym_{C,k} (a,b\sepi\Zz )=0 && \mbox{otherwise.}
\end{array}
$$
We have to verify that,
if $\ym_{C,k} =\alpha\cdot\ym_{1}+ (1-\alpha)\cdot\ym_{2}$, $\alpha\in (0,1)$ is a
non-trivial convex combination of $\ym_{1},\ym_{2}\in \KON$ then $\ym_{1}$ and $\ym_{2}$ are non-negative
multiples of $\ym_{C,k}$. To show $\ym =\gamma\cdot\ym_{C,k}$ for
some $\gamma\geq 0$ it is enough to verify:
\begin{description}
\item[(i)] ~\,$\ym (S)=0$ for $S\subseteq N$ with $|S\cap C|\leq k$,
\item[(ii)] ~$\ym (S)=\ym (S\cap C)$ for any $S\subseteq N$,
\item[(iii)] $\ym (S)=\ym (T)$ for $S,T\subseteq C$, $|S|=|T|=k+1$,
\item[(iv)] if $\gamma$ is the shared value from (iii) then $\ym (S)=\gamma+\ym (R)$
for any $R,S\subseteq C$, such that $R\subseteq S$ and $k\leq |R|=|S|-1$.
\end{description}
To verify (i) for $\ym_{1},\ym_{2}$ with some such $S\subseteq N$ write
$$
0=\ym_{C,k} (S)=\alpha\cdot\ym_{1}(S)+(1-\alpha)\cdot\ym_{2}(S)\,.
$$
The RHS here is a convex combination of non-negative terms; therefore, they both vanish, which means $0=\ym_{1}(S)=\ym_{2}(S)$.
To verify (ii) for $\ym_{1},\ym_{2}$ with some $S\subseteq N$
consider $(A,B\sepi\Zz ) = (S\cap C,S\setminus C\sepi\emptyset )$ and observe
$$
0=\Delta\ym_{C,k} (A,B\sepi\Zz ) =
\alpha\cdot \Delta\ym_{1} (A,B\sepi\Zz ) + (1-\alpha)\cdot \Delta\ym_{2} (A,B\sepi\Zz )\,.
$$
Hence, for $i=1,2$, $\Delta\ym_{i} (A,B\sepi\Zz )=0$, implying together with (i) for $\ym_{i}$ that
$\ym_{i}(S)=\ym_{i}(S\cap C)$. To verify (iii) it is enough to observe $\ym_{i}(S)=\ym_{i}(T)$ in case
$|S|=|T|=k+1$ with $S\setminus T=\{ s\}$ and $T\setminus S=\{ t\}$.
Choose $r\in S\cap T$, put
$R=(S\cap T)\setminus\{ r\}$ and consider the triplets  $(r,t\sepi R\cup\{ s\})$ and $(r,s\sepi R\cup\{ t\})$.
Since both $0=\Delta\ym_{C,k}(r,t\sepi R\cup\{ s\})$ and $0=\Delta\ym_{C,k}(r,s\sepi R\cup\{ t\})$, one has
$0=\Delta\ym_{i}(r,t\sepi R\cup\{ s\})=\Delta\ym_{i}(r,s\sepi R\cup\{ t\})$, for $i=1,2$.
Hence, by (i) for $\ym_{i}$,
$$
0=\Delta\ym_{i}(r,t\sepi R\cup\{ s\})-\Delta\ym_{i}(r,s\sepi R\cup\{ t\})= \ym_{i}(T)-\ym_{i}(S)\,.
$$
The condition (iv) can be verified by induction on $|S|$: (i) and (iii) for $\ym_{i}$
say (iv) holds for $|S|=k+1$. If $|S|>k+1$ and $S\setminus R=\{ s\}$ choose $t\in R$ and put $T=S\setminus\{ t\}$.
Because $0=\Delta\ym_{C,k}(s,t\sepi R\cap T)$ one gets
$0=\Delta\ym_{i}(s,t\sepi R\cap T)$, that is, $\ym_{i}(S)-\ym_{i}(R)=\ym_{i}(T)-\ym_{i}(R\cap T)=\gamma$
by the inductive assumption.
\end{proof}

\begin{corollary}\label{cor.cluster}\rm
Any generalized cluster inequality \eqref{eq.gen-cluster} defines an SE facet of\/ $\FVP$.
\end{corollary}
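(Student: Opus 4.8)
The plan is to obtain this corollary simply by feeding the output of Lemma~\ref{lem.cluster-supermod} into Theorem~\ref{thm.SEfacet=extsupermo}, since the former produces exactly the hypothesis the latter consumes. First I would recall from Lemma~\ref{lem.cluster-supermod} that the set function $\ym_{C,k}(S)=\max\{0,|S\cap C|-k\}$ is an \emph{extreme} standardized supermodular function, that the objective $\obj_{C,k}$ it determines through \eqref{eq.SEF} is precisely the left-hand side of the $k$-cluster inequality \eqref{eq.gen-cluster}, and that the shared value $\langle\obj_{C,k},\fam_{H}\rangle$ over full graphs $H$ equals $|C|-k$, the upper bound appearing in \eqref{eq.gen-cluster}.

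With these facts in hand, the ``if'' direction of Theorem~\ref{thm.SEfacet=extsupermo} applies verbatim: because $\ym_{C,k}$ is an extreme standardized supermodular function whose associated objective and full-graph value are exactly those of \eqref{eq.gen-cluster}, the inequality $\langle\obj_{C,k},\fam\rangle\leq |C|-k$ is facet-defining for $\FVP$ and tight at all full graphs over $N$. This settles the facet-defining claim. To upgrade ``facet'' to ``SE facet'' I would then invoke Lemma~\ref{lem.SEF}(b): since $\obj_{C,k}$ has the form \eqref{eq.SEF} by construction, it is an SE objective, so the facet is defined by an SE objective and hence, by Definition~\ref{def.s-SEF}, is an SE facet.

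I do not expect any genuine obstacle at the level of the corollary itself; all the substantive content---in particular the verification that $\ym_{C,k}$ generates an extreme ray of the supermodular cone---was already discharged in the proof of Lemma~\ref{lem.cluster-supermod}. The only points requiring care are bookkeeping: checking that the extension convention $\obj(b\sepi\emptyset)=0$ from \S\,\ref{sec.formal-def} is consistent with the coefficient formula for $\obj_{C,k}$, and that the upper bound $|C|-k$ really coincides with the common value $\langle\obj_{C,k},\fam_{H}\rangle$ at full graphs, both of which are recorded in Lemma~\ref{lem.cluster-supermod} and so may be cited directly.
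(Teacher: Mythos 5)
Your proposal is correct and follows essentially the same route as the paper, which proves the corollary by combining Lemma~\ref{lem.cluster-supermod} with Theorems~\ref{thm.SEfacet=extsupermo} and \ref{thm.WE=SEfacet}. The only (harmless) difference is in the final step: where the paper labels the facet SE via Theorem~\ref{thm.WE=SEfacet} (a facet tight at all full graphs is SE), you obtain the same conclusion slightly more directly from Lemma~\ref{lem.SEF}(b) and Definition~\ref{def.s-SEF}, since the defining objective $\obj_{C,k}$ has the form \eqref{eq.SEF} by construction.
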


\begin{proof}
Combine Lemma \ref{lem.cluster-supermod} with Theorems \ref{thm.SEfacet=extsupermo} and \ref{thm.WE=SEfacet}.
\end{proof}
\smallskip

The rest of this section is an observation which makes sense for a reader
familiar with elementary notions in the matroid theory. Thus, we assume
the reader knows basic equivalent definitions of a matroid in terms
of independent sets, bases and the rank function, as given, for example, in \cite[Chapter~1]{Oxl92}.
\smallskip

The link between generalized cluster inequalities and certain matroids
is based on a duality relationship of supermodular functions and their mirror images, submodular
functions. In fact, there is a one-to-one linear mapping from the cone $\KON$ of standardized
supermodular functions onto the cone of submodular functions $r: \calP (N)\to {\dv R}$
satisfying $r(\emptyset )=0$ and $r(N)=r(N\setminus\{ a\})$ for any $a\in N$.
The point is that the rank functions of non-degenerate matroids fall within this submodular cone.
Specifically, one can consider the {\em duality transformation} which
ascribes to any $\ym\in\KON$ the set function $r$ given by
$$
r(T) = \ym (N) - \ym (N\setminus T)\qquad
\mbox{for any $T\subseteq N$.}
$$
This self-inverse transformation maps the supermodular function $\ym_{C,k}$ for $C\subseteq N$, $|C|\geq 2$,
and $k=1,\ldots ,|C|-1$, onto the submodular function
\begin{equation}
r_{C,k}(T) = \min\, \{\, |T\cap C|, |C|-k\,\} \quad \mbox{for any}~~ T\subseteq N,
\label{eq.rank-uniform}
\end{equation}
which is the rank function of a matroid on $N$. However, it can be viewed as a
kind of trivial ``loop-adding" extension of a matroid which has $C$ as its ground set.
Indeed, the function \eqref{eq.rank-uniform}
can be identified with its restriction to $\calP (C)$, which is the rank function of the
uniform matroid of rank $|C|-k$ on $C$; see \cite[Example~1.2.7]{Oxl92}. The bases of this matroid
are just the subsets of $C$ of the cardinality $|C|-k$. Two remaining uniform matroids on $C$,
namely those of the ranks $0$ and $|C|$, differ in the property they are not connected:
that means a set $\emptyset\subset S\subset C$ exists with $r(C)=r(S)+r(C\setminus S)$,
where $r$ is their rank function; see \cite[\S\,4.2]{Oxl92} for this concept.
Therefore, one can summarize our observation by saying that the generalized cluster inequalities
for $C\subseteq N$ are in a one-to-one correspondence with {\em connected uniform matroids\/} on $C$.

\begin{remark}\rm
Note that the duality transformation is not the only one-to-one linear mapping between
the considered supermodular and submodular cones; see \cite[\S\,7.2]{SK14} for the details.
However, this fact is not important in our context since the use of the other transformation
leads to the same conclusion, the difference is that the uniform matroid on $C$ of the rank $k$
is ascribed to $\ym_{C,k}$ instead. On the other hand, the duality transformation has the property that
the vertices of the core polytope ascribed to $\ym_{C,k}$, as defined in the end of \S\,\ref{sec.weak-proof},
are just the incidence vectors for bases of the uniform matroid of rank $|C|-k$.
\end{remark}

\section{On the faces of the characteristic-imset polytope}\label{sec.fac-cores}

In this section, we introduce a one-to-one correspondence between faces of the characteristic-imset polytope
$\CIP$ and SE faces of the family-variable \mbox{polytope $\FVP$}. This allows us to characterize those
faces of $\CIP$ that correspond to SE facets.
\smallskip

Let $\langle\zob ,\c\rangle_{\Cai}\leq u$, where $\zob\in {\dv R}^{\Cai}$ and $u\in {\dv R}$, be
a valid inequality for $\c$ in the characteristic-imset polytope $\CIP$. It defines a face of $\CIP$:
$$
\barF =\{\, \c\in\CIP \, :\ \langle\zob ,\c\rangle_{\Cai} = u\,\}\,.
$$
By substituting \eqref{eq.eta-to-char} into the inequality $\langle\zob ,\c_{\fam}\rangle_{\Cai}\leq u$
and re-arranging terms after the components of $\fam$ one gets
an inequality for $\fam\in {\dv R}^{\Fai}$ valid for any $\fam_{G}$, $G\in\DAG$.
Indeed, this is because the image of $\fam_{G}$ by \eqref{eq.eta-to-char} is just $\c_{G}$.
Moreover, the objective on the LHS of the obtained inequality is SE because whenever $G\sim H$, one has $\c_{G}=\c_{G}$
and, therefore, $\langle\zob ,\c_{G}\rangle_{\Cai} =\langle\zob ,\c_{H}\rangle_{\Cai}$.
Thus, any face of $\CIP$ defines an SE face of\/ $\FVP$. Nevertheless, the converse is true.

\begin{lemma}\rm\label{lem.fac-cores}
Given an SE objective $\obj\in {\dv R}^{\Fai}$, there exists unique
$\zob_{\obj}\in {\dv R}^{\Cai}$ such that the following holds:
\begin{equation}
\forall\, \fam\in {\dv R}^{\Fai} \qquad \langle\obj ,\fam\rangle_{\Fai} =\langle\zob_{\obj},\c_{\fam}\rangle_{\Cai}\,.
\label{eq.fac-cores}
\end{equation}
Specifically, one has
\begin{eqnarray}
\zob_{\obj}(T) &:=& \sum_{\emptyset\neq K\subseteq R}\ (-1)^{|R\setminus K|}\cdot \obj (b\sepi K)
\label{eq.Se-object-trans}\\
&& ~~~~\mbox{for $T\in\Cai$, with any $b\in T$ and $R:= T\setminus\{ b\}$.} \nonumber
\end{eqnarray}
In particular, the expression in \eqref{eq.Se-object-trans} does not depend on the choice of $b\in T$.
\end{lemma}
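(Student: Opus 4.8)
The plan is to prove uniqueness first, then to confirm that the explicit formula \eqref{eq.Se-object-trans} is well-defined, and finally to verify the defining identity \eqref{eq.fac-cores} by matching coefficients.

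For uniqueness I would use that the transformation $\fam\mapsto\c_{\fam}$ of \eqref{eq.eta-to-char} is a \emph{linear} map from ${\dv R}^{\Fai}$ into ${\dv R}^{\Cai}$ whose image is a linear subspace containing $\CIP$. Since $\dim(\CIP)=|\Cai|$, that image is full-dimensional and hence equals all of ${\dv R}^{\Cai}$; in other words the map is surjective. Consequently, if some $\zob\in{\dv R}^{\Cai}$ satisfied $\langle\zob,\c_{\fam}\rangle_{\Cai}=0$ for every $\fam\in{\dv R}^{\Fai}$, then $\zob$ would be orthogonal to all of ${\dv R}^{\Cai}$, forcing $\zob=0$. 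This gives at once the uniqueness of any $\zob_{\obj}$ satisfying \eqref{eq.fac-cores}. The independence of the expression \eqref{eq.Se-object-trans} on the choice of $b\in T$ is exactly the ``in particular'' assertion of Corollary \ref{cor.SEF}, so $\zob_{\obj}$ is a well-defined element of ${\dv R}^{\Cai}$.

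For existence I would pass through the representation of $\obj$ afforded by Lemma \ref{lem.SEF}(b): there is $\ym\in{\dv R}^{\Cai}$ with $\obj(a\sepi B)=\ym(\{a\}\cup B)-\ym(B)$. Corollary \ref{cor.SEF} then rewrites \eqref{eq.Se-object-trans} as the M\"obius transform $\zob_{\obj}(T)=\sum_{L\in\Cai:\,L\subseteq T}(-1)^{|T\setminus L|}\cdot\ym(L)$, so that, using the extension conventions from \S\,\ref{sec.formal-def} (whereby $\ym$ and $\zob_{\obj}$ vanish on sets of size at most one), M\"obius inversion on the subset lattice yields the inverse relation $\ym(T)=\sum_{L\subseteq T}\zob_{\obj}(L)$ for every $T\subseteq N$.

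It then remains to verify \eqref{eq.fac-cores} by comparing, for each fixed $(a\sepi B)\in\Fai$, the coefficient of $\fam(a\sepi B)$ on the two sides. On the left this coefficient is plainly $\obj(a\sepi B)$. On the right I would substitute \eqref{eq.eta-to-char} into $\langle\zob_{\obj},\c_{\fam}\rangle_{\Cai}$ and collect the terms containing $\fam(a\sepi B)$; the variable $\fam(a\sepi B)$ occurs precisely for those $S$ with $a\in S$ and $S\setminus\{a\}\subseteq B$, that is, for $S=\{a\}\cup K$ with $\emptyset\neq K\subseteq B$. Hence the coefficient equals $\sum_{\emptyset\neq K\subseteq B}\zob_{\obj}(\{a\}\cup K)$, which by the inverse relation above telescopes to $\ym(\{a\}\cup B)-\ym(B)=\obj(a\sepi B)$. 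Matching coefficients over all $(a\sepi B)\in\Fai$ establishes \eqref{eq.fac-cores}. I expect the coefficient-matching computation to be the main obstacle, specifically the telescoping identity $\sum_{\emptyset\neq K\subseteq B}\zob_{\obj}(\{a\}\cup K)=\ym(\{a\}\cup B)-\ym(B)$; once the M\"obius relation between $\zob_{\obj}$ and $\ym$ is in hand this is routine, but it is the place where the conventions for empty and singleton sets must be tracked carefully.
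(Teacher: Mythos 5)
Your proof is correct, and its algebraic core coincides with the paper's: both rest on Lemma \ref{lem.SEF}(b) (providing $\ym$ with $\obj (a\sepi B)=\ym (\{a\}\cup B)-\ym (B)$), on Corollary \ref{cor.SEF} (identifying $\zob_{\obj}$ as the M\"obius transform of $\ym$, which also settles the independence of the choice of $b\in T$), and on M\"obius inversion yielding $\ym (S)=\sum_{T\in\Cai :\, T\subseteq S}\zob_{\obj}(T)$. Where you genuinely differ is in how \eqref{eq.fac-cores} is verified. The paper checks the identity only at the vertices: it computes $\langle\obj ,\fam_{G}\rangle_{\Fai}$ for $G\in\DAG$ by passing through the standard imset $\u_{G}$ of \eqref{eq.def-stand} and the affine relation \eqref{eq.stan-to-char} between $\u_{G}$ and $\c_{G}$, and then extends to all of ${\dv R}^{\Fai}$ because the codes $\fam_{G}$ linearly span ${\dv R}^{\Fai}$; uniqueness is obtained from the codes $\c_{G}$ spanning ${\dv R}^{\Cai}$. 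You instead prove the identity for every $\fam\in {\dv R}^{\Fai}$ at once, by substituting \eqref{eq.eta-to-char} and matching the coefficient of each variable $\fam (a\sepi B)$, which reduces to the identity $\sum_{\emptyset\neq K\subseteq B}\zob_{\obj}(\{ a\}\cup K)=\ym (\{ a\}\cup B)-\ym (B)$; this is precisely the inverse M\"obius relation for the set $\{a\}\cup B$ split according to whether $a\in L$, with the extension conventions killing the singleton term $\zob_{\obj}(\{a\})$, exactly as you flag. Your uniqueness argument (surjectivity of the map \eqref{eq.eta-to-char}, deduced from $\dim (\CIP )=|\Cai |$) is the same fact as the paper's spanning claim in a different guise. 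What each approach buys: yours is more self-contained and elementary, avoiding standard imsets and the vertex-then-extend step entirely; the paper's exploits the already-established formula \eqref{eq.stan-to-char}, keeping the lemma aligned with the standard-imset machinery it reuses elsewhere, at the price of the extra spanning argument.
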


\begin{proof}
We are going to show that $\zob_{\obj}\in {\dv R}^{\Cai}$ given by \eqref{eq.Se-object-trans} satisfies
\begin{equation}
\forall\, G\in\DAG\qquad \langle\obj ,\fam_{G}\rangle_{\Fai} =\langle\zob_{\obj},\c_{G}\rangle_{\Cai}\,.
\label{eq.fac-cor-graph}
\end{equation}
By Corollary \ref{cor.SEF}, we know that $\zob_{\obj}$ takes the form
\begin{eqnarray}
\zob_{\obj}(T) &\stackrel{\eqref{eq.SE-obj}}{=}& \sum_{L\in\Cai\, :\, L\subseteq T}\ (-1)^{|T\setminus L|}\,\cdot\, \ym (L)\quad \mbox{for $T\in\Cai$;} \label{eq.Moeb-z-y}\\
&& \qquad\quad \mbox{where $\ym\in {\dv R}^{\Cai}$ given by \eqref{eq.SEF}.}
\nonumber
\end{eqnarray}
The next step is to note that \eqref{eq.Moeb-z-y} is equivalent to the relation
\begin{equation}
\ym (S) = \sum_{T\in\Cai\, :\, T\subseteq S}\ \zob_{\obj}(T)\quad
\mbox{for any $S\in\Cai$},
\label{eq.Moeb-y-z}
\end{equation}
which can be verified by substituting \eqref{eq.Moeb-z-y} into the RHS of \eqref{eq.Moeb-y-z}.
To verify \eqref{eq.fac-cor-graph} substitute \eqref{eq.SEF} into the expression for
$\langle\obj ,\fam_{G}\rangle_{\Fai}$, then use the definitions of $\fam_{G}$ and that of the standard imset $\u_{G}$:
\begin{eqnarray*}
\lefteqn{\langle\obj ,\fam_{G}\rangle_{\Fai}
\stackrel{\eqref{eq.SEF}}{=} \sum_{(a\sepi B)\in\Fai}\,
\left\{\, \ym (\{ a\}\cup B)-\ym (B) \,\right\} \cdot \fam_{G}(a\sepi B)}\\
&=&  \sum_{\emptyset\neq S\subseteq N} \ym (S)\cdot
\left\{ \sum_{(a\sepi B)}\, \fam_{G}(a\sepi B)\cdot\delta_{S}(\{ a\}\cup B)
- \sum_{(a\sepi B)}\, \fam_{G}(a\sepi B)\cdot\delta_{S}(B) \right\}\\
&=& \sum_{\emptyset\neq S\subseteq N} \ym (S)\cdot \left\{ \sum_{a\in N}\, \delta_{S}(\{ a\}\cup \pa_{G}(a))
- \sum_{a\in N}\, \delta_{S}(\pa_{G}(a)) \right\}\\
&=& \sum_{\emptyset\neq S\subseteq N} \ym (S)\cdot \sum_{a\in N}\ \{\, \delta_{\{ a\}\cup \ppa_{G}(a)}(S) - \delta_{\ppa_{G}(a)}(S)\,\}\\
&\stackrel{\eqref{eq.def-stand}}{=}& \sum_{\emptyset\neq S\subseteq N} \ym (S)\cdot \{\delta_{N}(S)-\u_{G}(S)\}\,.
\end{eqnarray*}
Further, we substitute the relation \eqref{eq.Moeb-y-z} into the above expression and get this:
\begin{eqnarray*}
\lefteqn{\langle\obj ,\fam_{G}\rangle_{\Fai} = \sum_{\emptyset\neq S\subseteq N} \ym (S)\cdot \{\delta_{N}(S)-\u_{G}(S)\}}\\
~~&\stackrel{\eqref{eq.Moeb-y-z}}{=}&
\sum_{S\in\Cai}\,\,  \sum_{T\in\Cai\,:\, T\subseteq S} \zob_{\obj}(T)\cdot \{\delta_{N}(S)-\u_{G}(S)\} \\
&=& \sum_{T\in\Cai}\ \zob_{\obj}(T)\cdot \sum_{S:\, T\subseteq S\subseteq N} \{\delta_{N}(S)-\u_{G}(S)\} \\
&=& \sum_{T\in\Cai}\ \zob_{\obj}(T)\cdot \{ 1- \sum_{S:\, T\subseteq S\subseteq N} \u_{G}(S)\}
\stackrel{\eqref{eq.stan-to-char}}{=} \sum_{T\in\Cai}\ \zob_{\obj}(T)\cdot \c_{G}(T) = \langle\zob_{\obj},\c_{G}\rangle_{\Cai}\,.
\end{eqnarray*}
Since the codes $\fam_{G}$ for $G\in\DAG$ linearly span ${\dv R}^{\Fai}$ the relation \eqref{eq.fac-cor-graph}
implies \eqref{eq.fac-cores}. The uniqueness of the vector $\zob_{\obj}$ in the formula \eqref{eq.fac-cores}
is easy because the codes $\c_{G}$ for $G\in\DAG$ span ${\dv R}^{\Cai}$.
\end{proof}

Every face of the characteristic-imset polytope $\CIP$ can be identified with a set of acyclic directed graphs
closed under Markov equivalence:
$$
\barF\subseteq\CIP ~~\mbox{a face of\/ $\CIP$} ~~\longleftrightarrow ~~
\setdags =\{ G\in\DAG :\ \c_{G}\in\barF \}\,.
$$
Indeed, the arguments given above Definition~\ref{def.face-DAG} are also
valid for $\POL =\CIP$ and, since the vertices of $\CIP$ are just the characteristic imsets,
its faces can be viewed as sets of characteristic imsets. These, however, correspond to equivalence classes of
graphs over $N$. Thus, every face of $\CIP$ can be identified
with a set of such graphs, namely with the union of the respective equivalence classes.
These are just the graphs whose characteristic imsets belong to the face.
It is easy to see that the correspondence preserves inclusion: $\barF_{1}\subseteq\barF_{2}$ for faces
of $\CIP$ iff $\setdags_{1}\subseteq\setdags_{2}$ for the corresponding sets of graphs $\setdags_{i}\subseteq\DAG$.

\begin{corollary}\label{cor.fac-cores}\rm
There is a one-to-one correspondence between SE faces of\/ $\FVP$ and faces of $\CIP$ which preserves inclusion:
given SE faces $F_{1},F_{2}$ of\/ $\FVP$ and the corresponding faces $\barF_{1},\barF_{2}$ of $\CIP$ one has
$F_{1}\subseteq F_{2}$ if and only if $\barF_{1}\subseteq \barF_{2}$.
Specifically, the SE face of\/ $\FVP$ given by an inequality $\langle\obj ,\fam\rangle_{\Fai}\leq u$
corresponds to the face of\/ $\CIP$ given the inequality $\langle\zob_{\obj},\c\rangle_{\Cai}\leq u$.
This correspondence has the property that the sets of graphs identified with the faces coincide.
\end{corollary}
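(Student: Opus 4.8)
The plan is to establish the correspondence at the level of the underlying sets of graphs and then to read off bijectivity and inclusion-preservation formally from the two identifications already in hand: SE faces of\/ $\FVP$ with subsets of\/ $\DAG$ (Definition~\ref{def.face-DAG} and the surrounding discussion) and faces of\/ $\CIP$ with subsets of\/ $\DAG$ (the discussion just above this corollary). For a face I would write $\setdags_{F}:=\{G\in\DAG:\fam_{G}\in F\}$ and $\setdags_{\barF}:=\{G\in\DAG:\c_{G}\in\barF\}$; since every face is the convex hull of the vertices it contains, $F$ is recovered from $\setdags_{F}$ and $\barF$ from $\setdags_{\barF}$. Hence it suffices to prove that a subset of\/ $\DAG$ is the graph set of an SE face of\/ $\FVP$ if and only if it is the graph set of a face of\/ $\CIP$, and that matched faces carry the \emph{same} graph set. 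Injectivity, surjectivity and inclusion-preservation will then follow because both identifications are order-preserving.

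For the direction from $\CIP$ to\/ $\FVP$ I would invoke what was shown just before Lemma~\ref{lem.fac-cores}: given a face $\barF$ of\/ $\CIP$ defined by $\langle\zob,\c\rangle_{\Cai}\leq u$, substituting \eqref{eq.eta-to-char} yields a valid inequality $\langle\obj,\fam\rangle_{\Fai}\leq u$ for\/ $\FVP$ whose objective $\obj$ is SE and satisfies $\langle\obj,\fam_{G}\rangle_{\Fai}=\langle\zob,\c_{G}\rangle_{\Cai}$ for every $G$; consequently $\fam_{G}$ lies on the induced face of\/ $\FVP$ exactly when $\c_{G}\in\barF$, so the two graph sets coincide and the bound $u$ is shared. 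For the reverse direction I would start from an SE face $F$ defined by an SE objective $\obj$ and bound $u$, apply Lemma~\ref{lem.fac-cores} to obtain $\zob_{\obj}$ with $\langle\obj,\fam\rangle_{\Fai}=\langle\zob_{\obj},\c_{\fam}\rangle_{\Cai}$ for all $\fam$, and observe that evaluating at the vertices $\c_{G}$ (which by \S\,\ref{sec.notation} are exactly the vertices of\/ $\CIP$) shows $\langle\zob_{\obj},\c\rangle_{\Cai}\leq u$ to be valid on all of\/ $\CIP$, since validity on vertices extends to their convex hull. The face $\barF$ it cuts out satisfies $\c_{G}\in\barF$ iff $\langle\obj,\fam_{G}\rangle_{\Fai}=u$ iff $\fam_{G}\in F$, so again the graph sets coincide; this is precisely the ``Specifically'' clause relating $\langle\obj,\fam\rangle_{\Fai}\leq u$ to $\langle\zob_{\obj},\c\rangle_{\Cai}\leq u$.

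The step demanding the most care — and the obstacle the Remark after Definition~\ref{def.s-SEF} warns about — is well-definedness: an SE face may be cut out by several distinct SE objectives, so I must check that the associated face of\/ $\CIP$ is independent of this choice. I would resolve this by anchoring the correspondence to the graph set rather than to the objective: both constructions above produce a face of\/ $\CIP$ whose graph set is $\setdags_{F}$, and a face of\/ $\CIP$ is uniquely determined by its graph set, so the output does not depend on the chosen defining objective. The same uniqueness on the\/ $\FVP$ side yields injectivity, the $\CIP\to\FVP$ construction yields surjectivity, and since $F_{1}\subseteq F_{2}\Leftrightarrow\setdags_{F_{1}}\subseteq\setdags_{F_{2}}\Leftrightarrow\barF_{1}\subseteq\barF_{2}$ by the two order-preserving identifications, inclusion is preserved in both directions, completing the argument.
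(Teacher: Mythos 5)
Your proof is correct and takes essentially the same route as the paper: both rest on Lemma~\ref{lem.fac-cores} (together with the discussion preceding it for the $\CIP\to\FVP$ direction) to show that an SE face of\/ $\FVP$ and the corresponding face of\/ $\CIP$ carry the same set of graphs, and then derive the correspondence and its inclusion-preservation from the identification of faces with their sets of vertices/graphs. Your explicit handling of well-definedness (independence of the chosen SE objective, anchored to the graph set) makes precise a point the paper leaves implicit, but it is the same argument in substance.
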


\begin{proof}
It is easy to see that $\langle\obj ,\fam\rangle_{\Fai}\leq u$ is valid for $\fam\in\FVP$ iff
$\langle\zob_{\obj},\c\rangle_{\Cai}\leq u$ is valid for $\c\in\CIP$.
Moreover, by Lemma \ref{lem.fac-cores}, the set of $G\in\DAG$ such that $\langle\obj ,\fam\rangle_{\Fai}\leq u$
is tight for $\fam_{G}$ coincides with the set of $G\in\DAG$ such that $\langle\zob_{\obj},\c\rangle_{\Cai}\leq u$
is tight for $\c_{G}$. Thus, an SE face of\/ $\FVP$ and the corresponding face of $\CIP$ have the same sets of ``belonging" graphs. This observation easily implies the claim about preserving the inclusion of faces.
\end{proof}

There are two distinguished vertices of the characteristic imset polytope  $\CIP$.
One of them is the 0-imset, the zero vector in ${\dv R}^{\Cai}$, which is the
characteristic imset of the empty graph over $N$. The other one is the 1-imset,
a vector in ${\dv R}^{\Cai}$ whose all components are ones, which is the characteristic
imset of any of the full graphs over $N$. It plays a crucial role in the
description of faces of $\CIP$ corresponding to SE facets of\/ $\FVP$.

\begin{corollary}\label{cor.SE-facet-cores}\rm
SE facets of the family-variable polytope $\FVP$ correspond to those facets of
the characteristic-imset polytope $\CIP$ that contain the 1-imset.
None of those facets of $\CIP$ include the 0-imset.
\end{corollary}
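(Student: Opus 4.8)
The plan is to transport everything through the inclusion-preserving bijection $F\leftrightarrow\barF$ between SE faces of\/ $\FVP$ and faces of $\CIP$ provided by Corollary~\ref{cor.fac-cores}, using the fact recorded there that corresponding faces are identified with the \emph{same} set of graphs. Since the empty graph has characteristic imset equal to the 0-imset while every full graph has characteristic imset equal to the 1-imset, the condition ``$\barF$ contains the 1-imset'' is literally the condition ``the graph set of $F$ contains the full graphs'', and likewise for the 0-imset and the empty graph. The whole statement then reduces to matching facet-ness on the two sides by means of the maximal-face criterion of Lemma~\ref{lem.facet=maxim}.

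First I would dispatch the easy direction. Let $F$ be an SE facet of\/ $\FVP$. By Theorem~\ref{thm.WE=SEfacet} its graph set contains all full graphs, so the corresponding face $\barF$ of $\CIP$ contains the 1-imset. Because $F$ is a facet, Lemma~\ref{lem.facet=maxim} says that the only face of\/ $\FVP$ strictly above $F$ is\/ $\FVP$ itself, hence in particular the only \emph{SE} face strictly above $F$ is\/ $\FVP$. As the bijection preserves inclusion and sends\/ $\FVP\mapsto\CIP$, every face of $\CIP$ strictly above $\barF$ pulls back to an SE face strictly above $F$ and so must be $\CIP$; by Lemma~\ref{lem.facet=maxim} applied to $\CIP$, this makes $\barF$ a facet of $\CIP$ containing the 1-imset.

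The converse is the delicate direction, and the main obstacle is exactly that the bijection only sees SE faces, whereas facet-ness of $F$ in\/ $\FVP$ concerns maximality among \emph{all} faces. Let $\barF$ be a facet of $\CIP$ containing the 1-imset, with corresponding SE face $F$ of\/ $\FVP$; its graph set then contains all full graphs. The gap is closed by Lemma~\ref{lem.full}: any face $F''$ of\/ $\FVP$ with $F\subseteq F''$ inherits all full graphs from $F$ and is therefore an SE face. Thus every strict superface of $F$ is automatically SE, so the maximality of $F$ among SE faces (transported from maximality of $\barF$ among faces of $\CIP$) is genuine maximality among all proper faces of\/ $\FVP$. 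Lemma~\ref{lem.facet=maxim} then makes $F$ a facet, and being SE it is an SE facet, establishing the claimed correspondence.

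Finally, for the last sentence I would show that an SE facet never contains the empty graph. Such a facet is closed under Markov equivalence by Lemma~\ref{lem.SE>WE}, whereas by Lemma~\ref{lem.increase-facet}(iii) any facet tight at the empty graph must be one of the coordinate facets $-\fam (a\sepi B)\leq 0$, whose graph set $\{G:\pa_{G}(a)\neq B\}$ is not closed under Markov equivalence (the same observation already used in proving (a)$\Rightarrow$(b) of Theorem~\ref{thm.WE=SEfacet}). Hence the empty graph is absent from the graph set of $F=\barF$, so the facet $\barF$ of $\CIP$ omits the 0-imset.
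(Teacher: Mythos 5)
Your proposal is correct and follows essentially the same route as the paper's own proof: both directions transport maximality through the inclusion-preserving correspondence of Corollary~\ref{cor.fac-cores} via Lemma~\ref{lem.facet=maxim}, using Theorem~\ref{thm.WE=SEfacet} and Lemma~\ref{lem.full} to ensure every superface of $F$ is SE, and the final claim is settled through Lemma~\ref{lem.increase-facet}(iii). The only difference is cosmetic: you explicitly justify (via Lemma~\ref{lem.SE>WE} and non-closure under Markov equivalence) why the coordinate facets $-\fam (a\sepi B)\leq 0$ are not SE, a step the paper merely asserts.
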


\begin{proof}
Let $\barF$ be a face of $\CIP$ corresponding to an SE facet $F$ of\/ $\FVP$.
We show, using Lemma~\ref{lem.facet=maxim}, that $\barF$ is a facet of $\CIP$.
For a face $\barF^{\prime}$ of $\CIP$ with $\barF\subset\barF^{\prime}$ the respective SE face $F^{\prime}$
of\/ $\FVP$ satisfies, by Corollary~\ref{cor.fac-cores}, $F\subset F^{\prime}$. Thus, necessarily
$F^{\prime}=\FVP$, which implies $\barF^{\prime}=\CIP$. By Theorem \ref{thm.WE=SEfacet},
$F$ contains the whole equivalence class of full graphs and,  by Corollary~\ref{cor.fac-cores}, $\barF$ must
contain the 1-imset.

Conversely, let $F$ be an SE face of\/ $\FVP$ which corresponds to a facet $\barF$ of $\CIP$
containing the 1-imset. Using Lemma~\ref{lem.facet=maxim} observe that $F$ is a facet of\/ $\FVP$.
Indeed, since $F$ contains the whole equivalence class of full graphs, the same is the case for
any face $F^{\prime}$ of\/ $\FVP$ with $F\subset F^{\prime}$. By Lemma~\ref{lem.full}, $F^{\prime}$ is SE;
hence, it has the corresponding face $\barF^{\prime}$ of $\CIP$. By Corollary~\ref{cor.fac-cores} one has $\barF\subset \barF^{\prime}$;
therefore, $\barF^{\prime}=\CIP$, which implies $F^{\prime}=\FVP$.

The last claim follows easily by contradiction: otherwise the corresponding SE facet contains the empty graph
and, by Lemma \ref{lem.increase-facet}(iii), it is determined by \eqref{eq.facet-empty}.
But none of these facets of\/ $\FVP$ is SE.
\end{proof}

By combining Corollary \ref{cor.SE-facet-cores}, Theorems \ref{thm.WE=SEfacet} and \ref{thm.SEfacet=extsupermo}
one observes that the facets of $\CIP$ containing the 1-imset correspond to extreme supermodular functions.
On the other hand, it follows from Corollaries \ref{cor.fac-cores} and \ref{cor.SE-facet-cores} that the
SE faces of\/ $\FVP$ corresponding to facets of $\CIP$ not containing the 1-imset are
{\em sub-maximal SE faces} with respect to inclusion, but {\em not SE facets}. That means, these are SE faces
$F$ of\/ $\FVP$ such that there is no other SE face $F^{\prime}$ of \/ $\FVP$ such that
$F\subset F^{\prime}$ except $F^{\prime}=\FVP$ but $F$ is not a facet of\/ $\FVP$ since
$\dim (F)<\dim (\FVP )-1$. Example~\ref{exa.three-CIP}
in \S\,\ref{sec.examples} shows what such sub-maximal SE faces look like.
\smallskip

To illustrate Corollary \ref{cor.fac-cores} we transform the generalized cluster inequalities
\eqref{eq.gen-cluster} from \S\,\ref{sec.cluster-ineq} into the characteristic-imset frame.
Specifically, having fixed a cluster $C\subseteq N$, $|C|\geq 2$ and $k\in \{1,\ldots , |C|-1\}$,
the coefficients $\zob (S)$ for $S\in\Cai$ in the transformed corresponding
$k$-cluster inequality vanish outside subsets of $C$ and only depend
on the cardinality of the set $S$:
\begin{equation}
\zob (S) =
\left\{
\begin{array}{cl}
(-1)^{|S|-k-1}\cdot {|S|-2\choose |S|-k-1} & ~\mbox{if $S\subseteq C$ and $|S|\geq k+1$,}\\
0 & ~\mbox{otherwise}.
\end{array}
\right.
 ~\mbox{for $S\in\Cai$.}
\label{eq.tau-formula}
\end{equation}
The proof is based on an auxiliary combinatorial identity \eqref{eq.iden-1} from \S\,\ref{sec.comb-iden}.

\begin{lemma}\label{lem.gen-clust-char}\rm
In the context of the characteristic-imset polytope, the $k$-cluster inequality \eqref{eq.gen-cluster}
for $C\subseteq N$, $|C|\geq 2$ and $k\in \{1,\ldots , |C|-1\}$, takes the form
\begin{equation}
\sum_{S\in\Cai}\ \zob (S)\cdot \c(S)\leq |C|-k\,,
\quad \mbox{where $\zob (S)$ are given by \eqref{eq.tau-formula}.}
\label{eq.tau-ineq}
\end{equation}
\end{lemma}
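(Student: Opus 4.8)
The plan is to read off $\zob$ as the transformed objective $\zob_{\obj}$ supplied by Lemma~\ref{lem.fac-cores}. By Lemma~\ref{lem.cluster-supermod}, the left-hand objective $\obj_{C,k}$ of the $k$-cluster inequality \eqref{eq.gen-cluster} is the SE objective determined through \eqref{eq.SEF} by the supermodular function $\ym_{C,k}$ of \eqref{eq.cluster-supermod}, and its shared value over full graphs equals the bound $|C|-k$. Hence Corollary~\ref{cor.fac-cores} transports \eqref{eq.gen-cluster} to the characteristic-imset inequality $\langle\zob_{\obj_{C,k}},\c\rangle_{\Cai}\leq |C|-k$ with the same right-hand side, which already fixes the bound in \eqref{eq.tau-ineq}. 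It then only remains to compute the coefficients $\zob_{\obj_{C,k}}(T)$ and to match them with \eqref{eq.tau-formula}. For this I would invoke the M\"obius-type expression \eqref{eq.Moeb-z-y}, that is, $\zob_{\obj_{C,k}}(T)=\sum_{L\in\Cai:\,L\subseteq T}(-1)^{|T\setminus L|}\,\ym_{C,k}(L)$ for $T\in\Cai$; since $\ym_{C,k}$ is standardized, the summands with $|L|\leq 1$ vanish and the sum may equivalently run over all $L\subseteq T$ with the zero extension.

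Next I would split into two cases according to whether $T\subseteq C$. If $T\not\subseteq C$, I fix some $x\in T\setminus C$ and pair each subset $L\subseteq T$ not containing $x$ with $L\cup\{x\}$. Because $\ym_{C,k}(S)$ depends only on $S\cap C$ and $x\notin C$, the two paired terms carry equal values of $\ym_{C,k}$ but opposite signs $(-1)^{|T\setminus L|}$, so the entire sum cancels to $0$; this matches the ``otherwise'' branch of \eqref{eq.tau-formula}. If $T\subseteq C$, then $|L\cap C|=|L|$ for every $L\subseteq T$, and grouping subsets by cardinality $j=|L|$ gives
\begin{equation*}
\zob_{\obj_{C,k}}(T)=\sum_{j=k+1}^{|T|}(-1)^{|T|-j}\,(j-k)\,\binom{|T|}{j}\,.
\end{equation*}
For $|T|\leq k$ this sum is empty and equals $0$, again consistent with \eqref{eq.tau-formula}, while for $|T|\geq k+1$ the remaining task is purely arithmetic.

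The final step is to recognize the displayed alternating sum as an instance of the auxiliary combinatorial identity \eqref{eq.iden-1} proved in \S\,\ref{sec.comb-iden}, which evaluates it to $(-1)^{|T|-k-1}\binom{|T|-2}{|T|-k-1}$ and thus yields precisely \eqref{eq.tau-formula} (with $|S|=|T|$). Substituting these coefficients together with the already-identified bound $|C|-k$ then gives \eqref{eq.tau-ineq}. I expect the only genuine obstacle to be the closed-form evaluation of the cardinality sum, but this is exactly the content of \eqref{eq.iden-1}; once the reduction to $\ym_{C,k}$ through Lemma~\ref{lem.fac-cores} and the M\"obius formula \eqref{eq.Moeb-z-y} is in place, the rest of the argument reduces to the cancellation observation for $T\not\subseteq C$ and routine bookkeeping.
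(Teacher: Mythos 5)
Your route is sound and genuinely different from the paper's. The paper argues in the opposite direction: it starts from the claimed coefficients \eqref{eq.tau-formula}, substitutes \eqref{eq.eta-to-char} into \eqref{eq.tau-ineq}, and verifies term-by-term that the result collapses to \eqref{eq.gen-cluster}; the key arithmetic there is showing that a certain sum of the $\zob(S)$ equals $1$, which is exactly \eqref{eq.iden-1}. You instead push forward: you use Lemma~\ref{lem.cluster-supermod} plus the uniqueness in Lemma~\ref{lem.fac-cores} to reduce everything to computing $\zob_{\obj_{C,k}}$ via the M\"obius formula \eqref{eq.Moeb-z-y} applied to $\ym_{C,k}$. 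That reduction is legitimate (uniqueness of $\zob_\obj$ makes identifying the coefficients sufficient), the pairing argument for $T\not\subseteq C$ is correct, and the cardinality grouping for $T\subseteq C$ is correct. Your approach has the advantage of \emph{deriving} the coefficients \eqref{eq.tau-formula} rather than merely verifying them, which is arguably more informative.

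However, your final step cites the wrong identity, and this is not a cosmetic slip: \eqref{eq.iden-1} evaluates its sum to $1$, whereas your alternating sum must evaluate to $(-1)^{|T|-k-1}\binom{|T|-2}{|T|-k-1}$, which is generally not $1$ (e.g.\ for $|T|=4$, $k=2$ it equals $-2$), so \eqref{eq.iden-1} cannot do the job as stated. What you actually need is the \emph{general} identity \eqref{eq.combi} of Lemma~\ref{lem.combin}, with its parameter $K$ set to $k$. Concretely, writing $t=|T|$, $s=t-k-1$ and substituting $j=k+1+m$,
\begin{equation*}
\sum_{j=k+1}^{t}(-1)^{t-j}(j-k)\binom{t}{j}
=(-1)^{s}\sum_{m=0}^{s}(-1)^{m}\binom{(k+1)+s}{(k+1)+m}\binom{m+1}{m}\,,
\end{equation*}
and since $\binom{m+1}{m}=\binom{m+(k+1)-k}{m}$, this is \eqref{eq.combi} with the identity's parameters $(s,\,k+1,\,K=k)$, giving $(-1)^{s}\binom{s+k-1}{k-1}=(-1)^{t-k-1}\binom{t-2}{t-k-1}$, as required by \eqref{eq.tau-formula}. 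Because Lemma~\ref{lem.combin} proves \eqref{eq.combi} in full generality, the gap is immediately repairable within the paper's toolkit; interestingly, the paper's proof needs only the special case $K=1$, while yours needs $K=k$.
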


\begin{proof}
By suitable substitutions we re-write \eqref{eq.tau-ineq} into the desired form \eqref{eq.gen-cluster}:
\begin{eqnarray*}
\lefteqn{\sum_{S\in\Cai}\, \zob (S)\cdot \c_{\fam}(S) \stackrel{\eqref{eq.tau-formula},\eqref{eq.eta-to-char}}{=}
\sum_{S\subseteq C\,:\, |S|\geq k+1} \zob (S)\cdot
\sum_{a\in S} \sum_{B\,:\, S\setminus\{ a\}\subseteq B\subseteq N\setminus\{ a\}} \fam (a\sepi B)\quad ~}\\
&=& \sum_{a\in C}\, \sum_{B\subseteq N\setminus\{ a\}\,:\, |B\cap C|\geq k}
\fam (a\sepi B)\cdot
\underbrace{\sum_{S\,:\,|S|\geq k+1,\, a\in S,\, S\setminus\{ a\}\subseteq B\cap C}
\zob (S)}_{1}\,.
\end{eqnarray*}
It remains to show that, for fixed $a\in C$ and $B\subseteq N\setminus\{ a\}$ with $|B\cap C|\geq k$,
the indicated expression is indeed $1$. We put $\ell :=|B\cap C|$, $s:=\ell -k$ and write:
\begin{eqnarray*}
\lefteqn{\sum_{S\,:\, |S|\geq k+1,\, a\in S,\, S\setminus\{ a\}\subseteq B\cap C}
\zob (S)~ = \sum_{R\subseteq B\cap C\,:\, |R|\geq k} \zob (\{ a\}\cup R)}\\
&\stackrel{\eqref{eq.tau-formula}}{=}&
\sum_{R\subseteq B\cap C,\, |R|\geq k} (-1)^{|R|-k}\cdot {|R|+1-2\choose |R|-k}
= \sum_{r=k}^{\ell}\ {\ell\choose r}\cdot (-1)^{r-k}\cdot {r-1\choose r-k}\\
&=& \sum_{m=0}^{\ell -k}\ {\ell\choose k+m}\cdot (-1)^{m}\cdot {m+k-1\choose m}\\
&=& \sum_{m=0}^{s}\ (-1)^{m}\cdot {k+s\choose k+m}\cdot{m+k-1\choose m}
\stackrel{\eqref{eq.iden-1}}{=} 1\,,
\end{eqnarray*}
which concludes the proof.
\end{proof}

Thus, it follows from Lemma \ref{lem.gen-clust-char} using Corollaries \ref{cor.cluster}
and \ref{cor.SE-facet-cores} that \eqref{eq.tau-ineq} defines a facet of $\CIP$ containing the 1-imset.

\section{Simple illustrating examples}\label{sec.examples}
To illustrate the achieved results we analyze completely the situation in the case of three BN variables
and comment on the case of four BN variables.
\medskip

We have observed that the following inequalities are facet-defining for the family-variable
polytope $\FVP$ in case $|N|=n\geq 3$:
\begin{itemize}
\item the {\em non-negativity\/} constraints \eqref{eq.facet-empty} (see Lemma \ref{lem.increase-facet}(iii)),
\item the {\em modified convexity\/} constraints \eqref{eq.mod-conv} (see Lemma \ref{lem.mod-convex-facet}), and
\item the {\em generalized cluster\/} inequalities \eqref{eq.gen-cluster} (see Corollary \ref{cor.cluster}).
\end{itemize}
This is a complete list of facets of\/ $\FVP$ in the case of three BN variables. The following example illustrates
the observations from \S\,\ref{sec.cluster-ineq}; we use a shorthand $\fam (a\sepi bc)$ for $\fam (a\sepi \{ b,c\})$
below.

\begin{example}\label{exa.three-FVP}\rm
If $N=\{ a,b,c\}$ one has $|\Fai |=9$. The 9-dimensional polytope $\FVP$ has 25 vertices and 17 facets.
Five of its facets are SE and are defined by the generalized cluster inequalities.
They decompose into 3 permutation types:
\begin{itemize}
\item[$\bullet$] $\fam (a\sepi b)+\fam (a\sepi bc)+\fam (b\sepi a)+\fam (b\sepi ac)\leq 1$~ (3 inequalities of this type),\\[0.5ex]
the (generalized) cluster inequality for $C=\{ a,b\}$ (and $k=1$),\\
the extreme supermodular function is $\ym_{\{ a,b\},1}=\delta_{\{ a,b,c\}}+\delta_{\{ a,b\}}$,\smallskip
\item[$\bullet$] $\fam (a\sepi bc)+\fam (b\sepi ac)+\fam (c\sepi ab)\leq 1$~ (1 inequality of this type),\\[0.5ex]
the generalized cluster inequality for $C=\{ a,b,c\}$ and $k=2$,\\
it corresponds to the extreme supermodular function $\ym_{\{ a,b,c\},2}=\delta_{\{ a,b,c\}}$,\smallskip
\item[$\bullet$] $\fam (a\sepi b)+\fam (a\sepi c)+\fam (a\sepi bc)+\fam (b\sepi a)+\fam (b\sepi c)+\fam (b\sepi ac)\\
 ~~~~~~+\fam (c\sepi a)+\fam (c\sepi b)+\fam (c\sepi ab)\leq 2$~ (1 inequality of this type),\\[0.5ex]
the (generalized) cluster inequality for $C=\{ a,b,c\}$ (and $k=1$),\\
the supermodular function is $\ym_{\{ a,b,c\},1}=2\cdot\delta_{\{ a,b,c\}}+\delta_{\{ a,b\}}+\delta_{\{ a,c\}}+\delta_{\{ b,c\}}$.
\end{itemize}
If one adds nine non-negativity constraints
\begin{itemize}
\item[$\bullet$] $-\fam(a\sepi b)\leq 0$~ (6 inequalities of this type),\medskip
\item[$\bullet$] $-\fam(a\sepi bc)\leq 0$~ (3 inequalities of this type),
\end{itemize}
to those five generalized cluster inequalities then one obtains a polytope with 28 vertices.
Besides the 25 vertices of\/ $\FVP$ it has 3 additional integral vertices of the type
$\Id_{a\separ \{ b\}}+\Id_{a\separ \{ c\}}$. By adding the modified convexity constraints
\begin{itemize}
\item[$\bullet$]
$\fam (a\sepi b)+\fam (a\sepi c)+\fam (a\sepi bc)\leq 1$~ (3 inequalities of this type),
\end{itemize}
one completes the list of facet-defining inequalities for $\FVP$.
\end{example}

In the case of four BN variables there are other facet-defining inequalities
for $\FVP$ than those given by \eqref{eq.facet-empty}, \eqref{eq.mod-conv} and \eqref{eq.gen-cluster}.
In fact,
\begin{itemize}
\item there are other SE facets than those given by clusters in \eqref{eq.gen-cluster},
\item there are facets besides the SE facets and those given by the non-negativity
constraints \eqref{eq.facet-empty} and modified convexity constraints \eqref{eq.mod-conv}.
\end{itemize}

\begin{example}\rm\label{exa.four-FVP}
If $N=\{ a,b,c,d\}$ one has $|\Fai |=28$ and
the 28-dimensional \mbox{polytope} $\FVP$ has 543 vertices and 135 facets.
There exist 37 SE facets of $\FVP$ which decompose into 10 permutation types.
In \S\,\ref{sec.SE-four} we give the list of those types. Six of those types
are the generalized cluster inequalities \eqref{eq.gen-cluster}, but the remaining four of them are not.

The substantial difference from the case of three BN variables is that the polyhedron $\FVP^{*}$ specified
by 37 SE facet-defining inequalities, 28 non-negativity constraints and 4
modified convexity constraints differs from $\FVP$. We computed the vertices of $\FVP^{*}$
and found that, besides all the 543 DAG-codes,
it has 786 additional fractional vertices in comparison with $\FVP$, which decompose into 37 permutation types.
Here we give three examples of them:
\begin{eqnarray*}
\fam_{1}&=&\frac{1}{2}\cdot \Id_{a\,\separ \{ b\}} +\frac{1}{2}\cdot \Id_{a\,\separ
\{ d\}} +\frac{1}{2}\cdot \Id_{b\,\separ \{ a, c\}} +\frac{1}{2}\cdot \Id_{c\,\separ \{
a\}}\\
&&+\,\frac{1}{2}\cdot \Id_{c\,\separ \{ b, d\}} +\frac{1}{2}\cdot
\Id_{d\,\separ \{ a, b, c\}},\\
\fam_{2}&=&\frac{1}{3}\cdot \Id_{a\,\separ \{ c\}} +\frac{1}{3}\cdot \Id_{a\,\separ
\{ d\}} +\frac{1}{3}\cdot \Id_{a\,\separ \{ b, c, d\}} +\frac{1}{3}\cdot
\Id_{b\,\separ \{ a\}} +\frac{1}{3}\cdot \Id_{b\,\separ \{ a, c, d\}}\\
&&+\,\frac{1}{3}\cdot \Id_{c\,\separ \{ b\}} +\frac{1}{3}\cdot \Id_{c\,\separ \{ d\}}
+\frac{1}{3}\cdot \Id_{c\,\separ \{ a, b\}} +\frac{1}{3}\cdot \Id_{d\,\separ \{ a, b, c\}},\\
\fam_{3}&=& \frac{1}{6}\cdot \Id_{a\,\separ \{ b\}} +\frac{1}{3}\cdot \Id_{a\,\separ
\{ d\}} +\frac{1}{3}\cdot \Id_{b\,\separ \{ c\}} +\frac{1}{3}\cdot
\Id_{b\,\separ \{ a, c, d\}} \\
&& +\,\frac{1}{3}\cdot \Id_{c\,\separ \{ a\}} +\frac{1}{3}\cdot
\Id_{c\,\separ \{ d\}} +\frac{1}{3}\cdot \Id_{c\,\separ \{ a, b, d\}}
+\frac{1}{3}\cdot \Id_{d\,\separ \{ b, c\}}.
\end{eqnarray*}
Therefore, the family-variable polytope $\FVP$ necessarily has, besides the above mentioned facets,
additional non-SE facets. There are 66 such facet-defining inequalities which
decompose into five permutation types; see \cite{CJKB15} for details.
\end{example}
\smallskip

The next example is devoted to the characteristic-imset polytope $\CIP$
and illustrates the observations from \S\,\ref{sec.fac-cores}.
In  case $|N|=3$, every facet of $\CIP$ either contains the 1-imset
or contains the 0-imset.

\begin{example}\rm\label{exa.three-CIP}\rm
If $N=\{ a,b,c\}$ one has $|\Cai |=4$. The 4-dimensional polytope $\CIP$ has 11 vertices and 13 facets;
they were already discussed in \cite[Examples~5,8]{SH13}.
There are five facet-defining inequalities tight for the 1-imset; they correspond to SE facets of $\FVP$
mentioned in Example \ref{exa.three-FVP}. Here is their overview in both
modes; they decompose into 3 permutation types:
\begin{itemize}
\item[$\bullet$] $\c (ab)\leq 1$~ (3 inequalities of this type),\\[0.5ex]
in family variables $\fam (a\sepi b)+\fam (a\sepi bc)+\fam (b\sepi a)+\fam (b\sepi ac)\leq 1$,\smallskip
\item[$\bullet$] $\c (abc)\leq 1$~ (1 inequality of this type),\\[0.5ex]
in family variables $\fam (a\sepi bc)+\fam (b\sepi ac)+\fam (c\sepi ab)\leq 1$,\smallskip
\item[$\bullet$] $\c (ab) +\c (ac) +\c (bc)-\c (abc)\leq 2$~ (1 inequality of this type)\\[0.5ex]
in family variables\\
$\fam (a\sepi b)+\fam (a\sepi c)+\fam (a\sepi bc)+\fam (b\sepi a)+\fam (b\sepi c)+\fam (b\sepi ac)\\
~~~~~~+\fam (c\sepi a)+\fam (c\sepi b)+\fam (c\sepi ab)\leq 2$.
\end{itemize}
The remaining eight facet-defining inequalities of $\CIP$ are tight for the 0-imset and
decompose into 4 permutation types:
\begin{itemize}
\item[$\bullet$] $-\c (ab)\leq 0$~ (3 inequalities of this type),\\[0.5ex]
in family variables $-\fam (a\sepi b)-\fam (a\sepi bc)-\fam (b\sepi a)-\fam (b\sepi ac)\leq 0$,\smallskip
\item[$\bullet$] $-\c (abc)\leq 0$~ (1 inequality of this type),\\[0.5ex]
in family variables $-\fam (a\sepi bc)-\fam (b\sepi ac)-\fam (c\sepi ab)\leq 0$,\smallskip
\item[$\bullet$] $-\c (ab) -\c (ac) +\c (abc)\leq 0$~ (3 inequalities of this type),\\[0.5ex]
in family variables $-\fam (a\sepi b)-\fam (a\sepi c)-\fam (a\sepi bc)-\fam (b\sepi a)-\fam (c\sepi a)\leq 0$,\smallskip
\item[$\bullet$] $-\c (ab) -\c (ac) -\c (bc) +2\cdot\c (abc)\leq 0$~ (1 inequality of this type),\\[0.5ex]
in family variables $-\fam (a\sepi b)-\fam (a\sepi c)-\fam (b\sepi a)-\fam (b\sepi c)-\fam (c\sepi a)-\fam (c\sepi b)\leq 0$.
\end{itemize}
These eight inequalities define in family variables sub-maximal SE faces of\/ $\FVP$ that are not facets:
they are implied by the non-negativity constraints. The $\fam$-polyhedron $\FVP^{\prime}$ given by all 13
above-mentioned SE inequalities in unbounded, it has a linear subspace of the dimension 5.
This polyhedron is, in fact, the pre-image of the polytope $\CIP$ by the characteristic transformation \eqref{eq.eta-to-char}.
\end{example}

As concerns the case of four BN variables, unlike the case of three BN variables, there are facets
of the characteristic-imset polytope $\CIP$ which neither contain the 0-imset nor the 1-imset.

\begin{example}\rm\label{exa.four-CIP}
In case  $N=\{ a,b,c,d\}$ one has  $|\Cai |=11$ and
the 11-dimensional polytope $\CIP$ has 185 vertices and 154 facets.
Thus, it has 358 fewer vertices than the family-variable polytope $\FVP$, but 19 more facets than $\FVP$.
Besides those 37 facets that correspond to SE facets of $\FVP$ and contain the 1-imset
(Corollary~\ref{cor.SE-facet-cores}), there exist 117 facets of $\CIP$
that do not contain the 1-imset. They decompose into 20 permutation types, which
are listed in \S\,\ref{sec.CIP-four}.

With the exception of one permutation type all these inequalities are tight for the 0-imset.
The exception is
\begin{eqnarray}
-\c (bc) -\c(bd) -\c (cd) \hspace*{5mm}&&\nonumber \\
+\c (abc) +\c (abd) +\c (acd) +2\cdot\c (bcd) -2\cdot\c (abcd) &\leq& 1,\hspace*{8mm} \label{eq.c20}\\
\mbox{in family variables}~ +\fam (a\sepi bc)+\fam (a\sepi bd)+\fam (a\sepi cd)+\fam (a\sepi bcd)\hspace*{5mm}&&\nonumber \\
-\fam (b\sepi c)-\fam (b\sepi d)-\fam (c\sepi b)-\fam (c\sepi d)-\fam (d\sepi b)-\fam (d\sepi c)
 &\leq& 1,\hspace*{5mm}\nonumber
\end{eqnarray}
consisting of 4 inequalities. The $\fam$-version of the inequality \eqref{eq.c20}, therefore,
defines an inclusion submaximal SE face of\/ $\FVP$ which is not a facet.
Clearly, \eqref{eq.c20} follows from the modified convexity constraint
$$
\fam (a\sepi b)+\fam (a\sepi c)+\fam (a\sepi d)+\fam (a\sepi bc)+\fam (a\sepi bd)+\fam (a\sepi cd)+\fam (a\sepi bcd)\leq 1
$$
and the non-negativity constraints $-\fam (a\sepi b)\leq 0$, \ldots , $-\fam (d\sepi c)  \leq 0$.
\end{example}

\section{The sufficiency of SE faces}\label{sec.revised-conj}
As explained in \S\,\ref{sec.SE-defin}, the statistical task of learning
BN structure can be turned into an LP problem to maximize an SE objective
over the family-variable \mbox{polytope} $\FVP$.  When solving such problems by means
of the tools of (integer) linear programming an important question is what are the
inequalities specifying the feasible set. The computational complexity depends
on how many inequalities we actually/potentially use, how complex they are,
how closely we are able to approximate the true feasible set, which is the family-variable
polytope $\FVP$ in our case.
\smallskip

In general, facet-defining inequalities for a polytope $\POL$ are suitable
when one maximizes a linear objective over $\POL$ \cite{Wol98}.
Since our goal is to maximize quite special linear objectives over $\FVP$ a natural
question is whether we really need all facet-defining inequalities for $\FVP$.
Indeed, the correspondence between SE faces of\/ $\FVP$ and faces of the characteristic-imset
polytope $\CIP$ explained in \S\,\ref{sec.fac-cores} allows one to transform the LP
problem to maximize an SE objective over $\FVP$ into the task to maximize a general
linear function over $\CIP$. This indicates that those facets of $\FVP$ that are not
SE are perhaps superfluous. Note that we know from Example \ref{exa.four-FVP} that
there are many non-SE facets of\/ $\FVP$ besides those given by the non-negativity
\eqref{eq.facet-empty} and modified convexity \eqref{eq.mod-conv} constraints.

On the other hand, transforming our LP problems completely into the frame of
characteristic imsets does not appear to be advantageous from the point of view of
computational complexity as observed in conclusions of \cite{SH14}. The main theoretical
reason is that simple non-negativity and modified convexity constraints
are represented in the characteristic-imset frame
by much higher number of more complex specific inequalities \cite{SH13}.

This motivates the idea of combining both polyhedral approaches to
benefit from from their different strengths. The constraints that are tight at the empty graph are clearly
better represented in the family-variable frame while the constraints that are tight
at (all) the full graphs are more naturally expressed in the characteristic-imset frame.
Why not stay in the family-variable frame, utilize \eqref{eq.facet-empty}
and \eqref{eq.mod-conv} there and combine them with SE constraints, which encode
the constraints on the characteristic-imset polytope?
\medskip

In this section we show that this is indeed possible. However, the {\em original conjecture}
we started with, namely that one can limit oneself to the inequalities defining
SE facets and the non-negativity and modified convexity constraints is false;
a counter-example in given in \S\,\ref{sec.five-var}.
\smallskip

The basic observation is that one can limit to SE \emph{faces}.

\begin{lemma}\rm\label{lem.revis-conj}
Let $\obj$ be an SE objective. Then the LP problem
to maximize $\fam\mapsto \langle\obj ,\fam\rangle_{\Fai}$
over $\fam\in {\dv R}^{\Fai}$ from the polyhedron $\FVP^{\prime}$ specified by
the inequalities defining SE faces of\/ $\FVP$ has the same optimal value as the LP problem to maximize
that function over the family-variable polytope $\FVP$.
\end{lemma}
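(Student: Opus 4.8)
The plan is to compare the two linear programs by pushing the SE objective $\obj$ through the characteristic-imset transformation $\Phi:\fam\mapsto\c_{\fam}$ of \eqref{eq.eta-to-char}, exploiting the fact (Lemma~\ref{lem.fac-cores}) that an SE objective factors as $\langle\obj,\fam\rangle_{\Fai}=\langle\zob_{\obj},\c_{\fam}\rangle_{\Cai}$ for a unique $\zob_{\obj}\in{\dv R}^{\Cai}$. One inequality is immediate: every inequality cutting out $\FVP^{\prime}$ defines a face of\/ $\FVP$ and is therefore valid for $\FVP$, so $\FVP\subseteq\FVP^{\prime}$ and hence $\max_{\fam\in\FVP}\langle\obj,\fam\rangle_{\Fai}\le\max_{\fam\in\FVP^{\prime}}\langle\obj,\fam\rangle_{\Fai}$.

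For the reverse inequality the key step is to show $\Phi(\FVP^{\prime})\subseteq\CIP$, i.e.\ that $\c_{\fam}\in\CIP$ for every $\fam\in\FVP^{\prime}$. I would write $\CIP$ as the intersection of its facet-defining half-spaces $\langle\zob_{i},\c\rangle_{\Cai}\le u_{i}$, pull each one back along $\Phi$, and use Lemma~\ref{lem.fac-cores} to produce the SE objective $\obj_{i}$ with $\zob_{\obj_{i}}=\zob_{i}$. This yields inequalities $\langle\obj_{i},\fam\rangle_{\Fai}\le u_{i}$ whose normals are SE; as explained at the start of \S\,\ref{sec.fac-cores} together with Corollary~\ref{cor.fac-cores}, each defines an SE face of\/ $\FVP$ and is therefore among the inequalities specifying $\FVP^{\prime}$. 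Consequently any $\fam\in\FVP^{\prime}$ satisfies $\langle\zob_{i},\c_{\fam}\rangle_{\Cai}=\langle\obj_{i},\fam\rangle_{\Fai}\le u_{i}$ for all $i$, that is $\c_{\fam}\in\CIP$.

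Granting $\Phi(\FVP^{\prime})\subseteq\CIP$, the conclusion follows by an image computation. Indeed,
$$
\max_{\fam\in\FVP^{\prime}}\langle\obj,\fam\rangle_{\Fai}=\max_{\fam\in\FVP^{\prime}}\langle\zob_{\obj},\c_{\fam}\rangle_{\Cai}\le\max_{\c\in\CIP}\langle\zob_{\obj},\c\rangle_{\Cai}.
$$
The maximum of a linear function over the polytope $\CIP$ is attained at a vertex, and the vertices of $\CIP$ are exactly the characteristic imsets $\c_{G}=\Phi(\fam_{G})$ for $G\in\DAG$; hence, for a maximizing $G$, $\max_{\c\in\CIP}\langle\zob_{\obj},\c\rangle_{\Cai}=\langle\zob_{\obj},\c_{G}\rangle_{\Cai}=\langle\obj,\fam_{G}\rangle_{\Fai}\le\max_{\fam\in\FVP}\langle\obj,\fam\rangle_{\Fai}$. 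Combining this with the easy inequality gives equality of the two optimal values; in particular the objective is bounded above on $\FVP^{\prime}$, so the optimal value over $\FVP^{\prime}$ is finite and well defined even though $\FVP^{\prime}$ may itself be unbounded.

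The only place that needs care is the inclusion $\Phi(\FVP^{\prime})\subseteq\CIP$, namely the verification that the pulled-back facet inequalities of $\CIP$ are indeed implied by the constraints cutting out $\FVP^{\prime}$; this is precisely what the SE-face/$\CIP$-face correspondence of \S\,\ref{sec.fac-cores} supplies, so the substantive work is already done and the rest is bookkeeping. Equivalently, and more slickly, one may bypass the image computation: setting $u^{*}:=\max_{\fam\in\FVP}\langle\obj,\fam\rangle_{\Fai}$, the single inequality $\langle\obj,\fam\rangle_{\Fai}\le u^{*}$ has the SE normal $\obj$ and is tight on a nonempty face of\/ $\FVP$, hence it itself defines an SE face and is therefore one of the inequalities specifying $\FVP^{\prime}$; this gives $\max_{\fam\in\FVP^{\prime}}\langle\obj,\fam\rangle_{\Fai}\le u^{*}$ at once and closes the argument.
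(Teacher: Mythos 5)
Your main argument is correct and is essentially the paper's own proof: both routes factor the SE objective as $\langle\obj,\fam\rangle_{\Fai}=\langle\zob_{\obj},\c_{\fam}\rangle_{\Cai}$ via Lemma~\ref{lem.fac-cores}, identify the constraints cutting out $\FVP^{\prime}$ with the pulled-back inequalities of $\CIP$ (Corollary~\ref{cor.fac-cores}), so that $\FVP^{\prime}$ sits inside the pre-image of $\CIP$ under \eqref{eq.eta-to-char}, and then reduce both LPs to maximizing $\c\mapsto\langle\zob_{\obj},\c\rangle_{\Cai}$ over $\CIP$, whose vertices are exactly the imsets $\c_{G}$, $G\in\DAG$; your remark that the value is finite even though $\FVP^{\prime}$ is unbounded is a worthwhile point the paper leaves implicit.

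Your closing shortcut, though, is a genuinely different and more elementary argument that the paper does not use: since $\obj$ is itself SE, the inequality $\langle\obj,\fam\rangle_{\Fai}\leq u^{*}$ with $u^{*}:=\max_{\fam\in\FVP}\langle\obj,\fam\rangle_{\Fai}$ has an SE normal and its tight set is a nonempty SE face of\/ $\FVP$, so it is literally one of the constraints specifying $\FVP^{\prime}$, and $\max_{\fam\in\FVP^{\prime}}\langle\obj,\fam\rangle_{\Fai}\leq u^{*}$ follows with no reference to $\CIP$ at all. What the paper's (and your main) route buys in exchange for the extra machinery is exactly what is needed afterwards: Theorem~\ref{thm.the-result} must thin the redundant list of SE-face constraints down to the $\fam$-versions of the finitely many facet-defining inequalities of $\CIP$ (minus those tight at the 0-imset), and for that the identification of $\FVP^{\prime}$ with the pre-image of $\CIP$ is essential, whereas the tautological shortcut yields no such reduction.
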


\begin{proof}
A basic observation is that the image of $\FVP$ by the transformation \eqref{eq.eta-to-char}
is $\CIP$, which can be viewed as the polyhedron specified through its faces.
The pre-image of $\CIP$ with \eqref{eq.eta-to-char} is, therefore, the polyhedron $\FVP^{\prime}$
of $\fam$-vectors specified by the respective inequalities in the $\fam$-mode, which are,
by Corollary \ref{cor.fac-cores}, just those defining SE faces of\/ $\FVP$.
Since both $\FVP$ and $\FVP^{\prime}$ have $\CIP$ as its image by \eqref{eq.eta-to-char},
it follows from Lemma \ref{lem.fac-cores} that the maximization of
$\fam\mapsto \langle\obj ,\fam\rangle_{\Fai} =\langle\zob_{\obj},\c_{\fam}\rangle_{\Cai}$ over
any of them has the same optimal value as the maximization of
$\c\mapsto \langle\zob_{\obj},\c\rangle_{\Cai}$ over $\c$ in the polytope $\CIP$.
\end{proof}

However, most of the inequalities defining SE faces of\/ $\FVP$ are superfluous.
That redundant list can be reduced as follows.

\begin{theorem}\label{thm.the-result}
Let $\obj$ be an SE objective. Then the LP problem to
$$
\mbox{maximize $\fam\mapsto \langle\obj ,\fam\rangle_{\Fai}$ over $\fam\in\FVP$}
$$
has the same optimal value as the LP problem to maximize the same function
over the polyhedron specified by
\begin{itemize}
\item the inequalities defining SE faces that correspond to
those facets of\/ $\CIP$ that do not contain the 0-imset,
\item the non-negativity and modified convexity constraints
\eqref{eq.facet-empty} and \eqref{eq.mod-conv}.
\end{itemize}
\end{theorem}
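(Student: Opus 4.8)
The plan is to transport the problem to the characteristic-imset side through Lemma~\ref{lem.fac-cores} and to show that the entire feasible region in the statement is still mapped into $\CIP$ by the characteristic transformation \eqref{eq.eta-to-char}. Write $P\subseteq{\dv R}^{\Fai}$ for the polyhedron described in the theorem. Every one of its defining inequalities — the SE-face inequalities of the first bullet, the non-negativity constraints \eqref{eq.facet-empty}, and the modified convexity constraints \eqref{eq.mod-conv} — is valid for $\FVP$, so $\FVP\subseteq P$ and therefore $\max_{\fam\in P}\langle\obj,\fam\rangle_{\Fai}\geq\max_{\fam\in\FVP}\langle\obj,\fam\rangle_{\Fai}$. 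The whole content lies in the opposite inequality, and for this I would prove the inclusion $\{\c_{\fam}:\fam\in P\}\subseteq\CIP$. Granting it, Lemma~\ref{lem.fac-cores} gives for every $\fam\in P$ that $\langle\obj,\fam\rangle_{\Fai}=\langle\zob_{\obj},\c_{\fam}\rangle_{\Cai}\leq\max_{\c\in\CIP}\langle\zob_{\obj},\c\rangle_{\Cai}=\max_{\fam\in\FVP}\langle\obj,\fam\rangle_{\Fai}$, the final equality holding because \eqref{eq.eta-to-char} maps $\FVP$ onto $\CIP$. This bounds the objective on $P$ by the optimum over $\FVP$ and yields the claimed equality, attained already on $\FVP\subseteq P$.

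To establish $\{\c_{\fam}:\fam\in P\}\subseteq\CIP$, I would use that $\CIP$ is full-dimensional in ${\dv R}^{\Cai}$ (its dimension equals $|\Cai|$) and hence is exactly the set of vectors satisfying all of its facet-defining inequalities. I split the facets of $\CIP$ into those that do not contain the $0$-imset and those that do. Take a facet of the first kind, say $\langle\zob,\c\rangle_{\Cai}\leq u$. By the inclusion-preserving correspondence of Corollary~\ref{cor.fac-cores} it is associated with an SE face of $\FVP$ whose defining inequality is $\langle\obj^{\prime},\fam\rangle_{\Fai}\leq u$ with $\zob_{\obj^{\prime}}=\zob$; this is, by construction, one of the inequalities collected in the first bullet of the theorem. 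Consequently every $\fam\in P$ satisfies it, and Lemma~\ref{lem.fac-cores} turns this into $\langle\zob,\c_{\fam}\rangle_{\Cai}=\langle\obj^{\prime},\fam\rangle_{\Fai}\leq u$. Thus $\c_{\fam}$ respects every facet of $\CIP$ that misses the $0$-imset.

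The step I expect to carry the real weight is the treatment of the facets of $\CIP$ that contain the $0$-imset. Since the $0$-imset is the zero vector of ${\dv R}^{\Cai}$, such a facet has right-hand side $u=0$, i.e.\ it reads $\langle\zob,\c\rangle_{\Cai}\leq 0$. Its associated $\fam$-mode inequality $\langle\obj^{\prime},\fam\rangle_{\Fai}\leq 0$ (again with $\zob_{\obj^{\prime}}=\zob$) is valid for $\FVP$, so Lemma~\ref{lem.increase-facet}(ii) applies and gives $\obj^{\prime}(a\sepi B)\leq 0$ for all $(a\sepi B)\in\Fai$. Now any $\fam\in P$ obeys the non-negativity constraints \eqref{eq.facet-empty}, that is $\fam(a\sepi B)\geq 0$; hence every summand of $\langle\obj^{\prime},\fam\rangle_{\Fai}=\sum_{(a\sepi B)\in\Fai}\obj^{\prime}(a\sepi B)\cdot\fam(a\sepi B)$ is non-positive and $\langle\zob,\c_{\fam}\rangle_{\Cai}=\langle\obj^{\prime},\fam\rangle_{\Fai}\leq 0=u$. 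Combining the two kinds of facets, $\c_{\fam}$ satisfies every facet-defining inequality of $\CIP$, so $\c_{\fam}\in\CIP$, which is the inclusion needed above.

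Finally I would remark that this argument invokes only the first bullet together with the non-negativity constraints; the modified convexity constraints \eqref{eq.mod-conv} merely shrink $P$ further while keeping $\FVP\subseteq P$, so they do not alter the conclusion and are retained only because they are valid and practically convenient. The sole bookkeeping point is boundedness and attainment: the objective factors as $\langle\zob_{\obj},\c_{\fam}\rangle_{\Cai}$ through the bounded polytope $\CIP$, so it is bounded above on $P$, and the common optimal value is attained on $\FVP\subseteq P$.
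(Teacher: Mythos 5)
Your proof is correct and follows essentially the same route as the paper: both arguments sandwich the feasible polyhedron between $\FVP$ and the pre-image of $\CIP$ under \eqref{eq.eta-to-char}, and both dispose of the facets of $\CIP$ containing the 0-imset by the same appeal to Lemma~\ref{lem.increase-facet} together with the non-negativity constraints \eqref{eq.facet-empty}. The only difference is presentational: the paper cites Lemma~\ref{lem.revis-conj} for the transport of the optimization to $\CIP$, whereas you inline that argument (and are slightly more explicit that full-dimensionality of $\CIP$ is what permits describing it by its facet-defining inequalities alone).
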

\smallskip

\begin{proof}
We extend the arguments given in the proof of Lemma \ref{lem.revis-conj}.
The polytope $\CIP$ can be viewed as the polyhedron specified by its facet-defining inequalities.
In particular, the pre-image $\FVP^{\prime}$ of $\CIP$ by  \eqref{eq.eta-to-char} can equivalently
be defined as the polyhedron specified by the facet-defining inequalities for $\CIP$
which are \mbox{re-written} into the $\fam$-mode.

Of course, the conclusion of Lemma \ref{lem.revis-conj} on the same optimal value holds
for any polyhedron $\FVP^{\prime\prime}$ such that
$\FVP\subseteq \FVP^{\prime\prime}\subseteq \FVP^{\prime}$. Thus, in place
of $\FVP^{\prime\prime}$ one can take the polyhedron specified by the corresponding facet-defining inequalities for $\CIP$, the non-negativity and modified convexity constraints.

The last observation is that the
facet-defining inequalities for $\CIP$ that are tight for the 0-imset are implied
by the non-negativity constraints. Indeed, the $\fam$-versions of such inequalities
are tight at the empty graph and the observation follows from Lemma~\ref{lem.increase-facet}(i)-(ii).
Therefore, they can be dropped from the specification of $\FVP^{\prime\prime}$.
\end{proof}

Thus, our aim, when maximizing an SE objective, to eliminate non-SE facets of\/ $\FVP$
except for \eqref{eq.facet-empty} and \eqref{eq.mod-conv} seems to be achieved.
The price for it is that one has to include inequalities that are not
facet-defining for $\FVP$, namely some of the facet-defining inequalities for $\CIP$
written in the $\fam$-mode.

\begin{remark}\rm
It follows from the proof of Theorem \ref{thm.the-result} that the modified convexity
constraints \eqref{eq.mod-conv} are superfluous there. However,
Theorem \ref{thm.the-result} can be strengthened using a stronger
result from \cite{SH13} which says that one can exclude the so-called
{\em specific inequalities} from the list of the inequalities given by SE objectives.
These specific inequalities are shown in \cite[\S\,4.1]{SH13} to be exact translations
of the constraints \eqref{eq.facet-empty} and \eqref{eq.mod-conv} into the
frame of the characteristic imsets. The list of specific inequalities in case $|N|=4$
is given in \S\,\ref{sec.CIP-four}; only one of them, namely \eqref{eq.c20}, needs
\eqref{eq.mod-conv} for its derivation. Thus, in that strengthening of
Theorem \ref{thm.the-result}, the modified convexity inequality \eqref{eq.mod-conv}
must be included.
\end{remark}

It turns out that, in the case of $|N|=4$ the original conjecture---that
SE {\em facets} plus \eqref{eq.facet-empty} and \eqref{eq.mod-conv} are sufficient---is true.

\begin{corollary}\rm\label{cor.the-result-4BN}
If $|N|=4$ then the LP problem to maximize $\fam\mapsto \langle\obj ,\fam\rangle_{\Fai}$
over $\fam\in\FVP$ with an SE objective  $\obj\in {\dv R}^{\Fai}$ has the same optimal value as the LP problem
to maximize the same objective over the polyhedron specified by \eqref{eq.facet-empty} and \eqref{eq.mod-conv}
and the inequalities defining SE facets.
\end{corollary}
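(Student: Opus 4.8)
The plan is to specialize Theorem~\ref{thm.the-result} to the case $n=4$ and then prune the redundant inequalities it still carries. That theorem already asserts that the optimal value of the LP is unchanged if we maximize over the polyhedron cut out by (i)~the inequalities defining those SE faces of $\FVP$ that correspond to facets of $\CIP$ \emph{not} containing the 0-imset, together with (ii)~the non-negativity \eqref{eq.facet-empty} and modified convexity \eqref{eq.mod-conv} constraints. The only gap between this and the present claim is that the SE faces in (i) need not all be SE \emph{facets}: by Corollary~\ref{cor.SE-facet-cores} the SE facets are precisely those corresponding to facets of $\CIP$ that contain the 1-imset, so any facet of $\CIP$ avoiding both distinguished vertices contributes a sub-maximal SE face which is \emph{not} a facet. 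The whole task thus reduces to showing that these extra, non-facet inequalities may be discarded.

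First I would pin down, for $n=4$, exactly which facets of $\CIP$ avoid both the 0-imset and the 1-imset. Here I invoke the explicit enumeration of Example~\ref{exa.four-CIP} and the catalogue in \S\ref{sec.CIP-four}: the $117$ facets of $\CIP$ that do not contain the 1-imset split into $20$ permutation types, and all of them except the single type \eqref{eq.c20} are tight at the 0-imset. Hence the only SE faces occurring in (i) above that fail to be SE facets are the four inequalities of permutation type \eqref{eq.c20}, written in the $\fam$-mode.

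Next I would show that these four inequalities are redundant once \eqref{eq.facet-empty} and \eqref{eq.mod-conv} are retained. This is precisely the observation recorded at the end of Example~\ref{exa.four-CIP}: the $\fam$-form of \eqref{eq.c20} is a non-negative combination of one modified convexity constraint and several non-negativity constraints, and so is implied by them. Consequently, deleting the type-\eqref{eq.c20} inequalities from the description in Theorem~\ref{thm.the-result} leaves the polyhedron unchanged, and that polyhedron is exactly the one specified by \eqref{eq.facet-empty}, \eqref{eq.mod-conv} and the SE facets. Applying Theorem~\ref{thm.the-result} to this identical polyhedron then delivers the claimed equality of optimal values.

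The substance of the argument, and its only genuinely dimension-specific ingredient, is the finite check that \eqref{eq.c20} is the \emph{sole} type of facet of $\CIP$ missing both distinguished vertices and that it is dominated by \eqref{eq.mod-conv} together with \eqref{eq.facet-empty}; this rests on the computer-assisted catalogue in \S\ref{sec.CIP-four}. I expect this bookkeeping to be the main obstacle, and it is essential that it is verified rather than assumed: the statement does not extend to larger $n$, since the counter-example of \S\ref{sec.five-var} exhibits, for $n=5$, a facet of $\CIP$ avoiding both vertices whose $\fam$-version is \emph{not} implied by the retained constraints.
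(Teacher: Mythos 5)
Your proposal is correct and follows essentially the same route as the paper's own (much terser) proof: invoke Theorem~\ref{thm.the-result}, then use the catalogue of Example~\ref{exa.four-CIP} and \S\,\ref{sec.CIP-four} to see that the only facets of $\CIP$ avoiding both the 0-imset and the 1-imset are the four of type \eqref{eq.c20}, whose $\fam$-versions are implied by \eqref{eq.facet-empty} together with \eqref{eq.mod-conv}. Your write-up merely makes explicit the bookkeeping (via Corollary~\ref{cor.SE-facet-cores}) that the paper leaves implicit, so nothing further is needed.
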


\begin{proof}
Use Theorem \ref{thm.the-result}; as shown in Example \ref{exa.four-CIP}, the only
facets of $\CIP$ not implied solely by \eqref{eq.facet-empty}
are those in \eqref{eq.c20}, implied by the combination of \eqref{eq.facet-empty} and \eqref{eq.mod-conv}.
\end{proof}

\section{A counter-example to the original conjecture}\label{sec.five-var}

Recently, Orlinskaya, in her thesis \cite{Orl14} disproved Conjecture 1 from \cite{SV11}
by finding a new facet-defining inequality for the characteristic-imset polytope
$\CIP$ in case $N=\{ a,b,c,d,e\}$, which is neither tight for the 1-imset nor be
one of the earlier-mentioned {\em specific inequalities}, whose $\fam$-versions
are derivable from \eqref{eq.facet-empty} and \eqref{eq.mod-conv}. The inequality has this form:
\begin{eqnarray}
-\,\c (ab) +2\cdot\c (ac) +3\cdot\c (ae) +\c (bc) -\c (bd) +2\cdot\c (cd)  &&\nonumber\\
+\,5\cdot\c (ce) +3\cdot\c (de)  +2\cdot\c (abc)  +4\cdot\c (abd) +3\cdot\c (abe)  && \label{eq.five-var}\\
+\,\c (acd) -2\cdot\c (ace) +2\cdot\c (bcd) -\c (bce) - 3\cdot\c (cde)  -5\cdot\c (abcd)  && \nonumber\\
-\,2\cdot\c (abce) -3\cdot\c (abde) -\c (acde) +\c (bcde) +5\cdot\c (abcde) &\leq& 16\,.\nonumber
\end{eqnarray}
The substitution of \eqref{eq.eta-to-char} gives the family-variable version of the inequality:
\begin{eqnarray}
-\,\fam (a\sepi b) +2\cdot\fam (a\sepi c) +3\cdot\fam (a\sepi e) +3\cdot\fam (a\sepi bc)
+3\cdot\fam (a\sepi bd) && \nonumber \\
+\,5\cdot\fam (a\sepi be) +3\cdot\fam (a\sepi cd) +3\cdot\fam (a\sepi ce)
+3\cdot\fam (a\sepi de)   +3\cdot\fam (a\sepi bcd) && \nonumber \\
+\,5\cdot\fam (a\sepi bce) +6\cdot\fam (a\sepi bde) +3\cdot\fam (a\sepi cde)
+6\cdot\fam (a\sepi bcde)    && \nonumber \\
-\,\fam (b\sepi a) +\fam (b\sepi c) -\fam (b\sepi d)  +2\cdot\fam (b\sepi ac)
+2\cdot\fam (b\sepi ad)&& \nonumber \\
+\,2\cdot\fam (b\sepi ae) +2\cdot\fam (b\sepi cd) -\fam (b\sepi de)  +2\cdot\fam (b\sepi acd)
+2\cdot\fam (b\sepi ace)&& \nonumber \\
+\,2\cdot\fam (b\sepi ade) +2\cdot\fam (b\sepi cde) +5\cdot\fam (b\sepi acde)
+2\cdot\fam (c\sepi a) && \nonumber \\
+\,\fam (c\sepi b) +2\cdot\fam (c\sepi d) +5\cdot\fam (c\sepi e)  +5\cdot\fam (c\sepi ab)
+5\cdot\fam (c\sepi ad) && \label{eq.five-fam}\\
+5\cdot\fam (c\sepi ae) +5\cdot\fam (c\sepi bd) +5\cdot\fam (c\sepi be) +4\cdot\fam (c\sepi de)
+5\cdot\fam (c\sepi abd)&& \nonumber \\
+\,5\cdot\fam (c\sepi abe) +4\cdot\fam (c\sepi ade) +7\cdot\fam (c\sepi bde)
+7\cdot\fam (c\sepi abde)  -\fam (d\sepi b) & \nonumber \\
+\,2\cdot\fam (d\sepi c) +3\cdot\fam (d\sepi e)  +3\cdot\fam (d\sepi ab) +3\cdot\fam (d\sepi ac)
+3\cdot\fam (d\sepi ae)&& \nonumber \\
+\,3\cdot\fam (d\sepi bc) +2\cdot\fam (d\sepi be) +2\cdot\fam (d\sepi ce) +3\cdot\fam (d\sepi abc)
+3\cdot\fam (d\sepi abe) && \nonumber \\
+\,2\cdot\fam (d\sepi ace) +4\cdot\fam (d\sepi bce) +5\cdot\fam (d\sepi abce)  +3\cdot\fam (e\sepi a) && \nonumber \\
+\,5\cdot\fam (e\sepi c) +3\cdot\fam (e\sepi d) +6\cdot\fam (e\sepi ab) +6\cdot\fam (e\sepi ac)
+6\cdot\fam (e\sepi ad)&& \nonumber \\
+\,4\cdot\fam (e\sepi bc) +3\cdot\fam (e\sepi bd) +5\cdot\fam (e\sepi cd)  +6\cdot\fam (e\sepi abc)&& \nonumber \\
+\,6\cdot\fam (e\sepi abd) +5\cdot\fam (e\sepi acd) +5\cdot\fam (e\sepi bcd)
+8\cdot\fam (e\sepi abcd) &\leq& 16\,. \nonumber
\end{eqnarray}
Consider the corresponding SE objective $\obj^{*}\in {\dv R}^{\Fai}$,
that is, for any $(a\sepi B)\in\Fai$, $\obj^{*} (a\sepi B)$ is the coefficient with
$\fam (a\sepi B)$ in \eqref{eq.five-fam}. It follows immediately from Lemma \ref{lem.increase-facet}(iv)
that \eqref{eq.five-fam} is {\em not facet-defining for $\FVP$} because some
coefficients are negative. In fact,the respective SE face of\/ $\FVP$ given by
$\langle\obj^{*} ,\fam\rangle_{\Fai}=16$, denoted below by $F_{*}$, has the dimension 53, which is far from 74, the dimension of facets of $\FVP$.
We checked this fact by means of a computer: we found all 153 codes of acyclic directed
graphs on $F_{*}$; at most 54 of them are affinely independent.
\smallskip

On the other hand, the inequality \eqref{eq.five-var} {\em is facet-defining for $\CIP$}.
We have computed 59 characteristic imsets on this face of $\CIP$, denoted below by $\barF_{*}$, and found
26 of them affinely independent. This implies the dimension of $\barF_{*}$ is 25,
which is the dimension of facets of $\CIP$.
\smallskip

To get the desired counter-example we consider a convex combination $\fam_{\dag}$
of all $153$ codes of acyclic directed graphs on $F_{*}$ with the coefficients $\frac{1}{153}$:
\begin{eqnarray*}
\fam_{\dag} &:=& ~~\frac{4}{153}\cdot\Id_{a\,\separ \{ c \}} +\frac{4}{153}\cdot\Id_{a\,\separ \{ d \}}
+\frac{14}{153}\cdot\Id_{a\,\separ \{ e \}} +\frac{1}{153}\cdot\Id_{a\,\separ \{ b,c \}} \\
&& +\,\frac{10}{153}\cdot\Id_{a\,\separ \{ b,d \}} +\frac{3}{153}\cdot\Id_{a\,\separ \{ b,e \}}
+\frac{8}{153}\cdot\Id_{a\,\separ \{ c,d \}} +\frac{7}{153}\cdot\Id_{a\,\separ \{ c,e \}} \\
&& +\,\frac{7}{153}\cdot\Id_{a\,\separ \{ d,e \}} +\frac{1}{153}\cdot\Id_{a\,\separ \{ b,c,d \}}
+\frac{3}{153}\cdot\Id_{a\,\separ \{ b,c,e \}} +\frac{24}{153}\cdot\Id_{a\,\separ \{ b,d,e \}} \\
&& +\,\frac{3}{153}\cdot\Id_{a\,\separ \{ c,d,e \}} +\frac{18}{153}\cdot\Id_{a\,\separ \{ b,c,d,e \}}
+\frac{8}{153}\cdot\Id_{b\,\separ \{ c \}} +\frac{6}{153}\cdot\Id_{b\,\separ \{ e \}} \\
&& +\,\frac{6}{153}\cdot\Id_{b\,\separ \{ c,d \}} +\frac{6}{153}\cdot\Id_{b\,\separ \{ c,d,e \}}
+\frac{66}{153}\cdot\Id_{b\,\separ \{ a,c,d,e \}} +\frac{4}{153}\cdot\Id_{c\,\separ \{ a \}} \\
&& +\,\frac{4}{153}\cdot\Id_{c\,\separ \{ b \}} +\frac{2}{153}\cdot\Id_{c\,\separ \{ d \}}
+\frac{33}{153}\cdot\Id_{c\,\separ \{ e \}} +\frac{8}{153}\cdot\Id_{c\,\separ \{ a,b \}} \\
&& +\,\frac{15}{153}\cdot\Id_{c\,\separ \{ a,d \}} +\frac{13}{153}\cdot\Id_{c\,\separ \{ a,e \}}
+\frac{11}{153}\cdot\Id_{c\,\separ \{ b,d \}} +\frac{1}{153}\cdot\Id_{c\,\separ \{ b,e \}} \\
&& +\,\frac{2}{153}\cdot\Id_{c\,\separ \{ a,b,d \}} +\frac{1}{153}\cdot\Id_{c\,\separ \{ a,b,e \}}
+\frac{21}{153}\cdot\Id_{c\,\separ \{ b,d,e \}}  \\
&& +\,\frac{15}{153}\cdot\Id_{c\,\separ \{ a,b,d,e \}} +\frac{4}{153}\cdot\Id_{d\,\separ \{ a \}}
+\frac{2}{153}\cdot\Id_{d\,\separ \{ c \}} +\frac{38}{153}\cdot\Id_{d\,\separ \{ e \}} \\
&& +\,\frac{10}{153}\cdot\Id_{d\,\separ \{ a,b \}} +\frac{12}{153}\cdot\Id_{d\,\separ \{ a,c \}}
+\frac{13}{153}\cdot\Id_{d\,\separ \{ a,e \}} +\frac{1}{153}\cdot\Id_{d\,\separ \{ b,c \}} \\
&& +\,\frac{1}{153}\cdot\Id_{d\,\separ \{ a,b,c \}}  +\frac{2}{153}\cdot\Id_{d\,\separ \{ a,b,e \}}
+\frac{6}{153}\cdot\Id_{d\,\separ \{ b,c,e \}} \\
&& +\,\frac{13}{153}\cdot\Id_{d\,\separ \{ a,b,c,e \}} +\frac{8}{153}\cdot\Id_{e\,\separ \{ a \}} +\frac{3}{153}\cdot\Id_{e\,\separ \{ b \}} +\frac{23}{153}\cdot\Id_{e\,\separ \{ c \}} \\
&& +\,\frac{19}{153}\cdot\Id_{e\,\separ \{ d \}} +\frac{15}{153}\cdot\Id_{e\,\separ \{ a,b \}}
+\frac{12}{153}\cdot\Id_{e\,\separ \{ a,c \}} +\frac{17}{153}\cdot\Id_{e\,\separ \{ a,d \}} \\
&& +\,\frac{3}{153}\cdot\Id_{e\,\separ \{ b,d \}} +\frac{2}{153}\cdot\Id_{e\,\separ \{ c,d \}} +\frac{1}{153}\cdot\Id_{e\,\separ \{ a,b,c \}} +\frac{4}{153}\cdot\Id_{e\,\separ \{ a,b,d \}} \\
&& +\,\frac{2}{153}\cdot\Id_{e\,\separ \{ b,c,d \}} +\frac{14}{153}\cdot\Id_{e\,\separ \{ a,b,c,d \}}\,.
\end{eqnarray*}
It is tedious but straightforward to verify $\langle\obj^{*} ,\fam_{\dag}\rangle =16$.
One can also easily check that none of five modified convexity constraints is tight for $\fam_{\dag}$.
We also verified that the vector $\c_{\dag}\in {\dv R}^{\Cai}$ ascribed to $\fam_{\dag}$ by
\eqref{eq.eta-to-char} is in the relative interior of $\barF_{*}\subseteq\CIP$.
For this purpose, we have first used 59 vertices of $\barF_{*}$ to compute its 55 facets.
Then we verified computationally that $\c_{\dag}$ does not belong to any of the 55 facets of $\barF_{*}$.
\smallskip

The above observation implies that none of the SE-facets of\/ $\FVP$ contains $\fam_{\dag}$.
Indeed, assume for a contradiction that $\fam_{\dag}$ belongs to some SE-facet $F$ of\/ $\FVP$.
Then, by Lemma \ref{lem.fac-cores} and Corollary \ref{cor.fac-cores}, $\c_{\dag}$ belongs
to the corresponding face $\barF$ of $\CIP$, which is, by Corollary \ref{cor.SE-facet-cores},
a facet of $\CIP$.  The facet $\barF$ does not contain fully $\barF_{*}$
since otherwise, by Lemma \ref{lem.facet=maxim} applied to $\CIP$ and $\barF_{*}$, one has
$\barF=\barF_{*}$ and, by Corollary \ref{cor.fac-cores}, $F=F_{*}$, contradicting the
above mentioned fact that \eqref{eq.five-fam} is not facet-defining for $\FVP$. Therefore,
$\c_{\dag}\in\barF_{*}\cap\barF\subset\barF_{*}$, which is a contradiction with belonging $\c_{\dag}$ to the relative interior of $\barF_{*}$.
\smallskip

These observations are enough to derive the existence of a counter-example, which is the vector $\fam_{\star}:=(1+\epsilon)\cdot\fam_{\dag}$, where $\epsilon>0$ is small enough.
Indeed, $\fam_{\star}$ satisfies all non-negativity constraints and all other inequalities, namely
5 modified convexity constraints and SE-facets of\/ $\FVP$ are valid for $\fam_{\dag}$ but not tight
{for it: $\langle\obj ,\fam_{\dag}\rangle <u$ for the respective $\obj\in {\dv R}^{\Fai}$ and $u>0$.
Since the number of these inequalities is finite, a small $\epsilon$-perturbation
retains $\langle\obj ,\fam_{\star}\rangle <u$ for any of them.
On the other hand, the value of the considered SE objective
$\obj^{*}\in {\dv R}^{\Fai}$ for $\fam_{\star}$ is
$$
\langle\obj^{*} ,\fam_{\star}\rangle =(1+\epsilon)\cdot\langle\obj^{*} ,\fam_{\dag}\rangle=
(1+\epsilon)\cdot 16>16\,.
$$
Thus, the maximum of the linear SE objective $\fam\mapsto\langle\obj^{*} ,\fam\rangle$,
$\fam\in {\dv R}^{\Fai}$ on $\FVP$ is $16$, while its value in $\fam_{\star}$, which
satisfies \eqref{eq.facet-empty}, \eqref{eq.mod-conv} and all SE facet-defining inequalities for $\FVP$,
exceeds $16$. This gives the desired counter-example.

\section{Conclusions}\label{sec.conclusion}
Let us summarize the main achievements of the paper.
We dealt with two distinguished polytopes used in the ILP approach to BN structure learning,
namely with the {\em family-variable polytope} and the {\em characteristic-imset polytope}.
Being motivated by a common form of linear objectives to be maximized in the BN structure learning
we introduced the concept of a {\em score equivalent (SE) face} of the family-variable
polytope. We further characterized the linear space of the corresponding SE objectives (Lemma \ref{lem.SEF}).

A correspondence has been established between SE faces of the family-variable polytope $\FVP$ and
the faces of the characteristic-imset polytope $\CIP$, which preserves the inclusion of faces
(Corollary \ref{cor.fac-cores}). We observed that SE facets of $\FVP$ correspond to those facets
of $\CIP$ which contain a distinguished vector, called the 1-imset (Corollary \ref{cor.SE-facet-cores}).
These facets were shown to correspond to {\em extreme supermodular functions},
which gives an elegant method to verify that an inequality is SE-facet-defining for $\FVP$
(combined Theorems \ref{thm.WE=SEfacet} and \ref{thm.SEfacet=extsupermo}). To illustrate the method
we showed that the well-known (generalized) cluster inequalities are facet-defining for $\FVP$
(Corollary \ref{cor.cluster}) and derived their form in the context of the characteristic-imset polytope
(Lemma \ref{lem.gen-clust-char}). The correspondence with extreme supermodular set functions (Theorem~\ref{thm.SEfacet=extsupermo}) may appear to be useful because of a recent extremality criterion for supermodular functions from \cite{SK14}.

Since a typical linear objective appearing in the ILP approach to learning BN structure is special,
namely SE, we raised the question whether all facets of $\FVP$ are needed to specify the feasible sets
for (integer) linear programs when such an objective is maximized. We succeeded in showing that one can eliminate
those facets of\/ $\FVP$ that are not SE, that is, defined by a non-SE normal vectors
(Lemma~\ref{lem.revis-conj}, Theorem~\ref{thm.the-result}).
Nevertheless, our starting original conjecture that one can,
besides simple non-negativity and modified convexity constraints, limit oneself only to
SE facets of\/ $\FVP$ turned out not to be true (a counter-example is given in \S\,\ref{sec.five-var}).
The moral is that one has to consider the inequalities defining facets of the
characteristic-imset polytope $\CIP$ despite the fact that they do not define facets in the context
of the family-variable polytope $\FVP$.

This leads to a suggestion to use a combined coding of BN structures in the ILP approach.
One can encode a BN structure by a concatenation of the family-variable vector and the characteristic imset
and utilize the linear relation \eqref{eq.eta-to-char}. Linear constraints tight at the empty graph are better
represented by simple non-negativity and modified convexity inequalities
in the family-variable part, while the other SE linear inequality constraints
can be more naturally represented in the characteristic-imset part.

We left some of the questions open. One of them is whether a simple condition of
being closed under Markov equivalence characterizes the sets of graph-codes
belonging to SE faces of $\FVP$ (Conjecture \ref{conj.SE=WEface}). However,
it looks like the answer to this question is not essential for the practical application
of ILP methods in BN structure learning.

\begin{acknowledgements}
The research of Milan Studen\'{y} has been supported by the grant GA\v{C}R n.\
13-20012S. James Cussens was supported by the UK Medical Research
Council, grant G1002312 and senior postdoctoral fellowship SF/14/008 from KU Leuven.
Our special thanks are devoted to Fero Mat\'{u}\v{s}, who helped us to find an easy proof of the combinatorial identity from Lemma \ref{lem.combin}.
\end{acknowledgements}



\appendix
\normalsize

\section{Combinatorial identity}\label{sec.comb-iden}

\begin{lemma}\label{lem.combin}\rm
For every non-negative integers $s\geq 0$, $k\geq K\geq 0$ one has
\begin{equation}
\sum_{m=0}^{s} {(-1)}^{m}\cdot {k+s\choose k+m}\cdot {m+k-K\choose m} = {s+K-1 \choose K-1}\,,
\label{eq.combi}
\end{equation}
with conventions ${n\choose 0}={n\choose n}=1$ for any $n\in {\dv Z}$
and ${n\choose -1}={n\choose n+1}=0$ for any non-negative $n\in {\dv Z}$. In particular,
\begin{equation}
\forall\, s\geq 0,\ k\geq 1 ~~\mbox{integers}\qquad
\sum_{m=0}^{s} {(-1)}^{m}\cdot {k+s\choose k+m}\cdot {m+k-1\choose m} = 1\,.
\label{eq.iden-1}
\end{equation}
\end{lemma}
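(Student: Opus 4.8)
The plan is to convert the alternating sum into an ordinary Vandermonde convolution: I will absorb the factor $(-1)^{m}$ into the second binomial coefficient by upper negation, and then the identity collapses to a textbook convolution with no further computation.

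Throughout I would read every binomial coefficient as the generalized one, $\binom{r}{m}=r(r-1)\cdots (r-m+1)/m!$ for a non-negative integer $m$ and an arbitrary integer $r$, which is consistent with the boundary conventions stated in the lemma. The decisive algebraic step is the upper-negation identity $\binom{m+k-K}{m}=(-1)^{m}\binom{K-k-1}{m}$, so that $(-1)^{m}\binom{m+k-K}{m}=\binom{K-k-1}{m}$. At the same time I use the symmetry $\binom{k+s}{k+m}=\binom{k+s}{s-m}$. After these two substitutions the left-hand side of \eqref{eq.combi} becomes
$$
\sum_{m=0}^{s}\ \binom{k+s}{s-m}\cdot\binom{K-k-1}{m}\,,
$$
which is exactly the Vandermonde convolution $\sum_{m}\binom{a}{s-m}\binom{b}{m}=\binom{a+b}{s}$ with $a=k+s$ and $b=K-k-1$; the upper limit $s$ is automatic because $\binom{k+s}{s-m}$ vanishes for $m>s$. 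Hence the sum equals $\binom{k+s+K-k-1}{s}=\binom{s+K-1}{s}=\binom{s+K-1}{K-1}$, which is the right-hand side of \eqref{eq.combi}. The special case \eqref{eq.iden-1} then follows by taking $K=1$, since $\binom{s+K-1}{K-1}=\binom{s}{0}=1$.

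The only point needing care is bookkeeping of conventions: I must confirm that Vandermonde's identity remains valid with the negative upper index $b=K-k-1$ (note $b\le -1$ because $k\ge K$). This holds because both sides are polynomials in $b$ agreeing on infinitely many integer values, so the identity extends to all $b$; I would likewise check that the stated conventions $\binom{n}{-1}=\binom{n}{n+1}=0$ do not interfere at the boundary terms. The main conceptual obstacle---such as it is---is simply recognizing that the alternating sign can be swallowed by the second factor via upper negation; once that reduction is seen, the rest is the classical convolution and requires no grinding.
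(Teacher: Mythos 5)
Your proof is correct, but it takes a genuinely different route from the paper. The paper proves \eqref{eq.combi} by induction on $s+K$: the base cases $s=0$ and $K=0$ are computed directly (the latter via the binomial theorem, $\sum_m (-1)^m{s\choose m}=0$), and the inductive step splits ${k+s\choose k+m}$ by Pascal's rule to obtain $\Sigma(s,k,K)=\Sigma(s-1,k,K)+\Sigma(s,k-1,K-1)$, then recombines with Pascal's rule on the right-hand side. Your argument instead absorbs the sign by upper negation, $(-1)^m{m+k-K\choose m}={K-k-1\choose m}$, and invokes the Vandermonde convolution with one negative upper index, justified by the standard polynomial (or formal power series) extension argument. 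What your approach buys is brevity and a conceptual reduction to a classical identity, with no induction; what the paper's approach buys is complete self-containment (only Pascal's rule) and, importantly, explicit handling of the degenerate conventions, since the case $K=0$ is a base case there rather than an afterthought. The one place where your write-up genuinely leans on the deferred ``bookkeeping'' is the final step ${s+K-1\choose s}={s+K-1\choose K-1}$: for $K\geq 1$ this is honest symmetry, but for $K=0$ the top index can be negative (e.g.\ $s=0$ gives ${-1\choose 0}$ versus ${-1\choose -1}$) and symmetry of binomial coefficients does not hold in general for negative upper index; one must verify case by case that both sides agree under the paper's stated conventions (they do: both equal $0$ for $s\geq 1$ and both equal $1$ for $s=0$). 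Since you flagged exactly this check, the argument is sound, but that verification should be carried out rather than merely promised.
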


\begin{proof}
The proof relies on the Pascal's triangle identity
$$
{n\choose r}={n-1\choose r}+{n-1\choose r-1}\quad
\mbox{valid for integers $n\geq 1$, $n\geq r\geq 0$.}
$$
Let us denote the sum in \eqref{eq.combi} by $\Sigma (s,k,K)$;
the basic idea of the proof is the induction on $s+K$. First, we verify \eqref{eq.combi} in the case $s=0$:
$$
\Sigma (s=0,k,K)={(-1)}^{0}\cdot {k+0\choose k+0}\cdot {k-K\choose 0}=1=
{0+K-1\choose K-1}\,.
$$
Further special easy case is $s\geq 1$ and $K=0$, in which case
\begin{eqnarray*}
\lefteqn{\Sigma (s\geq 1,k,K=0) = \sum_{m=0}^{s} {(-1)}^{m}\cdot {k+s\choose k+m}\cdot {m+k\choose m}}\\
&=& \sum_{m=0}^{s} {(-1)}^{m}\cdot \frac{(k+s)!}{(k+m)!\cdot (s-m)!} \cdot \frac{(k+m)!}{m!\cdot k!}\\
&=& \frac{(k+s)!}{k!\cdot s!} \cdot \sum_{m=0}^{s} {(-1)}^{m}\cdot \frac{s!}{m!\cdot (s-m)!}
= {k+s\choose k}  \cdot \sum_{m=0}^{s} {(-1)}^{m}\cdot {s\choose m} \\
&=&  {k+s\choose k}  \cdot {(-1+1)}^{s}= 0=
{s-1\choose -1}\,.
\end{eqnarray*}
Thus, \eqref{eq.combi} holds in cases $s=0$ and $K=0$; in particular, if $s+K\leq 1$.
In case $s,K\geq 1$ the induction premise means \eqref{eq.combi} holds for $s^{\prime},K^{\prime}\geq 0$ with
$s^{\prime}+K^{\prime}\leq s+K-1$. To verify the induction step write by the identity
$$
{k+s\choose k+m}={k+s-1\choose k+m}+{k+s-1\choose k+m-1},\quad \mbox{use}~~
{k+s-1\choose k+s}=0\,,
$$
apply the induction premise and use the Pascal's triangle identity again:
\begin{eqnarray*}
\Sigma (s,k,K) &=& \sum_{m=0}^{s} {(-1)}^{m}\cdot {k+s\choose k+m}\cdot {m+k-K\choose m} ~~~~~~~~~~\\
&=& \sum_{m=0}^{s-1} {(-1)}^{m}\cdot {k+s-1\choose k+m}\cdot {m+k-K\choose m} \\
&& ~~~+
\sum_{m=0}^{s} {(-1)}^{m}\cdot {k+s-1\choose k+m-1}\cdot {m+k-K\choose m}\\
&=& \Sigma (s-1,k,K)+\Sigma (s,k-1,K-1) \\
&=& {s+K-2\choose K-1} +{s+K-2\choose K-2} = {s+K-1\choose K-1}\,,
\end{eqnarray*}
which gives the desired result. Putting $K=1$ gives \eqref{eq.iden-1}.
\end{proof}

\section{SE facets in case of four BN variables}\label{sec.SE-four}
There exist 37 SE facets of\/ $\FVP$ in the case $N=\{ a,b,c,d\}$ which decompose
into 10 permutations types. Below we list all the types of the inequalities, both in the family-variable mode
and in the characteristic-imset mode. The generalized cluster inequalities are indicated by $\bullet$,
the remaining types by $\circ$; those are also labeled by the notation used in the
catalogue from \cite{CJKB15}.
\smallskip

\begin{itemize}
\item[$\bullet$] the (generalized) cluster inequality for $C=\{ a,b\}$ (and $k=1$),
\small
$$
[\,\fam (a\sepi b)+\fam (a\sepi bc)+\fam (a\sepi bd)+\fam (a\sepi bcd)\,]
+[\,\fam (b\sepi a)+\fam (b\sepi ac)+\fam (b\sepi ad)+\fam (b\sepi acd)\,]\leq 1\,,
$$
\normalsize
(6 inequalities of this type),~ in characteristic imsets
$$
\c (ab)\leq 1\,,
$$
\item[$\bullet$] the generalized cluster inequality for $C=\{ a,b,c\}$ and $k=2$,
\small
$$
[\,\fam (a\sepi bc)+\fam (a\sepi bcd)\,] +[\,\fam (b\sepi ac)+\fam (b\sepi acd)\,]
+[\,\fam (c\sepi ab)+\fam (c\sepi abd)\,]\leq 1\,,
$$
\normalsize
(4 inequalities of this type),~ in characteristic imsets
$$
\c (abc)\leq 1\,,
$$
\item[$\bullet$] the (generalized) cluster inequality for $C=\{ a,b,c\}$ (and $k=1$),
\small
\begin{eqnarray*}
[\,\fam (a\sepi b)+\fam (a\sepi c)+\fam (a\sepi bc)+\fam (a\sepi bd)+\fam (a\sepi cd)+\fam (a\sepi bcd)\,] &&\\
+[\,\fam (b\sepi a)+\fam (b\sepi c)+\fam (b\sepi ac)+\fam (b\sepi ad)+\fam (b\sepi cd)+\fam (b\sepi acd)\,]
&&\\
+[\,\fam (c\sepi a)+\fam (c\sepi b)+\fam (c\sepi ab)+\fam (c\sepi ad)+\fam (c\sepi bd)+\fam (c\sepi abd)\,]
&\leq & 2\,,
\end{eqnarray*}
\normalsize
(4 inequalities of this type),~  in characteristic imsets
$$
\c (ab)+\c (ac) +\c(bc) - \c (abc) \leq 2\,,
$$
\item[$\bullet$] the generalized cluster inequality for $C=\{ a,b,c,d\}$ and $k=3$,
\small
$$
[\,\fam (a\sepi bcd)+\fam (b\sepi acd)+\fam (c\sepi abd)+\fam (d\sepi abc)\,]\leq 1\,,
$$
\normalsize
(1 inequality of this type),~ in characteristic imsets
$$
\c (abcd)\leq 1\,,
$$
\item[$\bullet$] the generalized cluster inequality for $C=\{ a,b,c,d\}$ and $k=2$,
\small
\begin{eqnarray*}
[\,\fam (a\sepi bc)+\fam (a\sepi bd)+\fam (a\sepi cd)+\fam (a\sepi bcd)\,] &&\\
+[\,\fam (b\sepi ac)+\fam (b\sepi ad)+\fam (b\sepi cd)+\fam (b\sepi acd)\,] &&\\
+[\,\fam (c\sepi ab)+\fam (c\sepi ad)+\fam (c\sepi bd)+\fam (c\sepi abd)\,] &&\\
+[\,\fam (d\sepi ab)+\fam (d\sepi ac)+\fam (d\sepi bc)+\fam (d\sepi abc)\,]  &\leq & 2\,,
\end{eqnarray*}
\normalsize
(1 inequality of this type),~ in characteristic imsets
$$
\c (abc) + \c (abd) +\c (acd) +\c(bcd) - 2\cdot\c (abcd) \leq 2\,,
$$
\item[$\bullet$] the (generalized) cluster inequality for $C=\{ a,b,c,d\}$ (and $k=1$),
\small
\begin{eqnarray*}
[\,\fam (a\sepi b)+\fam (a\sepi c)+\fam (a\sepi d)
+\fam (a\sepi bc)+\fam (a\sepi bd)+\fam (a\sepi cd)+\fam (a\sepi bcd)\,] &&\\
+[\,\fam (b\sepi a)+\fam (b\sepi c)+\fam (b\sepi d)
+\fam (b\sepi ac)+\fam (b\sepi ad)+\fam (b\sepi cd)+\fam (b\sepi acd)\,] &&\\
+[\,\fam (c\sepi a)+\fam (c\sepi b)+\fam (c\sepi d)
+\fam (c\sepi ab)+\fam (c\sepi ad)+\fam (c\sepi bd)+\fam (c\sepi abd)\,] &&\\
+[\,\fam (d\sepi a)+\fam (d\sepi b)+\fam (d\sepi c)
+\fam (d\sepi ab)+\fam (d\sepi ac)+\fam (d\sepi bc)+\fam (d\sepi abc)\,]  &\leq & 3\,,
\end{eqnarray*}
\normalsize
(1 inequality of this type),~ in characteristic imsets
\begin{eqnarray*}
\c (ab) + \c (ac) +\c (ad) +\c (bc) +\c(bd) +\c (cd)\hspace*{6mm}&&\\
-\c (abc) -\c (abd)
-\c (acd) -\c (bcd) +\c (abcd) &\leq& 3\,,
\end{eqnarray*}
\item[$\circ$]  non-cluster SE inequality with 13 terms\\[0.5ex]
[a-a,b-2-acd,c-2-abd,d-2-abc FACETS $a|bcd$]
\small
\begin{eqnarray*}
[\,\fam (a\sepi bc)+\fam (a\sepi bd)+\fam (a\sepi cd)+2\cdot\fam (a\sepi bcd)\,] &&\\
+[\,\fam (b\sepi ac)+\fam (b\sepi ad)+\fam (b\sepi acd)\,] &&\\
+[\,\fam (c\sepi ab)+\fam (c\sepi ad)+\fam (c\sepi abd)\,] &&\\
+[\,\fam (d\sepi ab)+\fam (d\sepi ac)+\fam (d\sepi abc)\,]  &\leq & 2\,,
\end{eqnarray*}
\normalsize
(4 inequalities of this type),~  in characteristic imsets
$$
\c (abc)+\c (abd) +\c(acd) - \c (abcd) \leq 2\,,
$$
\item[$\circ$] non-cluster SE inequality with 16 terms\\[0.5ex]
[a-2-bcd,b-2-acd,c-ab,d-ab FACETS $ab|cd$]
\small
\begin{eqnarray*}
[\,\fam (a\sepi b)+\fam (a\sepi bc)+\fam (a\sepi bd)+\fam (a\sepi cd)+\fam (a\sepi bcd)\,] &&\\
+[\,\fam (b\sepi a)+\fam (b\sepi ac)+\fam (b\sepi ad)+\fam (b\sepi cd)+\fam (b\sepi acd)\,]&&\\
+[\,\fam (c\sepi ad)+\fam (c\sepi bd)+\fam (c\sepi abd)\,]&&\\
+[\,\fam (d\sepi ac)+\fam (d\sepi bc)+\fam (d\sepi abc)\,]  &\leq & 2\,,
\end{eqnarray*}
\normalsize
(6 inequalities of this type),~  in characteristic imsets
$$
\c (ab)+\c (acd) +\c(bcd) - \c (abcd) \leq 2\,,
$$
\item[$\circ$] non-cluster SE inequality with 22 terms\\[0.5ex]
[a-a,a-2-bcd,b-ac,b-ad,c-ab,c-ad,d-ab,d-ac FACETS $a|bcd$]
\small
\begin{eqnarray*}
[\,\fam (a\sepi b)+\fam (a\sepi c)+\fam (a\sepi d)
+2\cdot\fam (a\sepi bc)+2\cdot\fam (a\sepi bd)
+2\cdot\fam (a\sepi cd)+2\cdot\fam (a\sepi bcd)\,] &&\\
+[\,\fam (b\sepi a)+\fam (b\sepi ac)+\fam (b\sepi ad)+\fam (b\sepi cd)+\fam (b\sepi acd)\,] &&\\
+[\,\fam (c\sepi a)+\fam (c\sepi ab)+\fam (c\sepi ad)+\fam (c\sepi bd)+\fam (c\sepi abd)\,] &&\\
+[\,\fam (d\sepi a)+\fam (d\sepi ab)+\fam (d\sepi ac)+\fam (d\sepi bc)+\fam (d\sepi abc)\,]  &\leq &3,
\end{eqnarray*}
\normalsize
(4 inequalities of this type),~  in characteristic imsets
$$
\c (ab) + \c (ac) +\c (ad) +\c(bcd) - \c (abcd) \leq 3\,,
$$
\item[$\circ$] non-cluster SE inequality with 26 terms\\[0.5ex]
[a-a,a-bcd,b-b,b-acd,c-c,c-ad,c-bd,d-d,d-ac,d-bc FACETS $ab|cd$]
\small
\begin{eqnarray*}
[\,\fam (a\sepi b)+\fam (a\sepi c)+\fam (a\sepi d)
+\fam (a\sepi bc)+\fam (a\sepi bd)+2\cdot\fam (a\sepi cd)+2\cdot\fam (a\sepi bcd)\,] &&\\
+[\,\fam (b\sepi a)+\fam (b\sepi c)+\fam (b\sepi d)
+\fam (b\sepi ac)+\fam (b\sepi ad)+2\cdot\fam (b\sepi cd)+2\cdot\fam (b\sepi acd)\,] &&\\
+[\,\fam (c\sepi a)+\fam (c\sepi b)
+\fam (c\sepi ab)+\fam (c\sepi ad)+\fam (c\sepi bd)+2\cdot\fam (c\sepi abd)\,] &&\\
+[\,\fam (d\sepi a)+\fam (d\sepi b)
+\fam (d\sepi ab)+\fam (d\sepi ac)+\fam (d\sepi bc)+2\cdot\fam (d\sepi abc)\,]  &\leq &4\,,
\end{eqnarray*}
\normalsize
(6 inequalities of this type),~  in characteristic imsets
$$
\c (ab) + \c (ac) +\c (ad) +\c (bc) +\c(bd) - \c (abc) -\c (abd) +\c (abcd) \leq 4\,.
$$
\end{itemize}

\section{Specific inequalities in case of four BN variables}\label{sec.CIP-four}
In the case $|N|=4$, the characteristic-imset polytope $\CIP$ has,
besides 37 facets containing the 1-imset and listed in \S\,\ref{sec.SE-four}, additional 117 specific facets that do not contain the 1-imset. They all are defined by means of the so-called
{\em specific inequalities\/} discussed in \cite[\S\,4.1.2]{SH13}. Each of these inequalities
corresponds to a clutter (= Sperner family) of non-empty subsets of $N$, that is,
to a class of inclusion-incomparable subsets of $N$.
The 117 specific facets decompose into 20 permutation types listed below.
Except 4 facets belonging to the last type, mentioned earlier in \eqref{eq.c20}, all of them
contain the 0-imset.
\smallskip

\begin{itemize}
\item[$\circ$] $-\c (ab)\leq 0$~ (6 inequalities of this type),\\[0.5ex]
Sperner family is\, $\calI =\{ ab\}$,\smallskip
\item[$\circ$] $-\c (abc)\leq 0$~ (4 inequalities of this type),\\[0.5ex]
Sperner family is\, $\calI =\{ abc\}$,\smallskip
\item[$\circ$]
$-\c (abcd)\leq 0$~ (1 inequality of this type),\\[0.5ex]
Sperner family is\, $\calI =\{ abcd\}$,\smallskip
\item[$\circ$]
$-\c (ab)-\c (ac)-\c(bc)+2\cdot\c (abc) \leq 0$~ (4 inequalities of this type),\\[0.5ex]
Sperner family is\, $\calI =\{ ab, ac, bc\}$,\smallskip
\item[$\circ$]
$-\c (ab)-\c (acd)-\c(bcd)+2\cdot\c (abcd) \leq 0$~ (6 inequalities of this type),\\[0.5ex]
Sperner family is\, $\calI =\{ ab, acd, bcd\}$,\smallskip
\item[$\circ$]
$-\c (abc)-\c (abd)-\c(acd)+2\cdot\c (abcd)\leq 0$~ (4 inequalities of this type),\\[0.5ex]
Sperner family is\, $\calI =\{ abc, abd, acd\}$,\smallskip
\item[$\circ$]
$-\c (abc)-\c (abd)-\c (acd)-\c(bcd)+3\cdot\c (abcd)\leq 0$~ (1 inequality),\\[0.5ex]
Sperner family is\, $\calI =\{ abc, abd, acd, bcd\}$,\smallskip
\item[$\circ$]
$-\c (ab)-\c (ac)-\c (ad)-\c(bcd)+\c (abc)+\c (abd)+\c (acd)\leq 0$~ (4 inequalities),\\[0.5ex]
Sperner family is\, $\calI =\{ ab, ac, ad, bcd\}$,\smallskip
\item[$\circ$]
$-\c (ab)-\c (ac)-\c (ad)-\c (bc)-\c(bd)$\\[0.4ex]
\hspace*{2mm}$+2\cdot\c (abc)+2\cdot\c (abd)
+\c (acd)+\c (bcd)-2\cdot\c (abcd)\leq 0$~ (6 inequalities),\\[0.5ex]
Sperner family is\, $\calI =\{ ab, ac, ad, bc, bd\}$,\smallskip
\item[$\circ$]
$-\c (ab)-\c (ac)-\c (ad)-\c (bc)-\c(bd)-\c (cd)$\\[0.4ex]
\hspace*{2mm}$+2\cdot\c (abc)+2\cdot\c (abd)
+2\cdot\c (acd)+2\cdot\c (bcd)-3\cdot\c (abcd)\leq 0$~ (1 inequality),\\[0.5ex]
Sperner family is\, $\calI =\{ ab, ac, ad, bc, bd, cd\}$,\smallskip
\item[$\circ$]
$-\c (ab)-\c (ac)+\c (abc)\leq 0$~ (12 inequalities of this type),\\[0.5ex]
Sperner family is\, $\calI =\{ ab, ac\}$,\smallskip
\item[$\circ$]
$-\c (abc)-\c (abd)+\c (abcd)\leq 0$~ (6 inequalities of this type),\\[0.5ex]
Sperner family is\, $\calI =\{ abc, abd\}$,\smallskip
\item[$\circ$]
$-\c (ab)-\c (acd)+\c (abcd)\leq 0$~ (12 inequalities of this type),\\[0.5ex]
Sperner family is\, $\calI =\{ ab, acd\}$,\smallskip
\item[$\circ$]
$-\c (ab)-\c (cd)+\c (abcd)\leq 0$~ (3 inequalities of this type),\\[0.5ex]
Sperner family is\, $\calI =\{ ab, cd\}$,\smallskip
\item[$\circ$]
$-\c (ab)-\c (ac)-\c (ad)+\c (abc)+\c (abd)+\c (acd)-\c (abcd)\leq 0$~ (4 inequalities),\\[0.5ex]
Sperner family is\, $\calI =\{ ab, ac, ad\}$,\smallskip
\item[$\circ$]
$-\c (ab)-\c (ac)-\c (bd)+\c (abc)+\c (abd)\leq 0$~
(12 inequalities of this type),\\[0.5ex]
Sperner family is\, $\calI =\{ ab, ac, bd\}$,\smallskip
\item[$\circ$]
$-\c (ab)-\c (ac)-\c(bcd)+\c (abc)+\c (abcd)\leq 0$~ (12 inequalities of this type),\\[0.5ex]
Sperner family is\, $\calI =\{ ab, ac, bcd\}$,\smallskip
\item[$\circ$]
$-\c (ab)-\c (ac)-\c (bc)-\c(cd)+2\cdot\c (abc)+\c (acd)+\c (bcd)-\c (abcd)\leq 0$\\[0.4ex]
(12 inequalities of this type),\\[0.5ex]
Sperner family is\, $\calI =\{ ab, ac, bc, cd\}$,\smallskip
\item[$\circ$]
$-\c (ab)-\c (ad)-\c (bc)-\c (cd)$\\[0.4ex]
\hspace*{2mm}$+\c (abc)+\c (abd)
+\c (acd)+\c (bcd)-\c (abcd)\leq 0$~ (3 inequalities),\\[0.5ex]
Sperner family is\, $\calI =\{ ab, ad, bc, cd\}$,\smallskip
\item[$\circ$]
$-\c (bc)-\c(bd)-\c (cd)+\c (abc)+\c (abd)
+\c (acd)+2\cdot\c (bcd)-2\cdot\c (abcd)\leq 1$\\[0.4ex]
(4 inequalities of this type),\\[0.5ex]
Sperner family is\, $\calI =\{ a, bc, bd, cd\}$.
\end{itemize}

\end{document}